\documentclass[10pt]{article}

%\RequirePackage{fix-cm}
%%
%%\documentclass{svjour3}                     % onecolumn (standard format)
%%\documentclass[smallcondensed]{svjour3}     % onecolumn (ditto)
%\documentclass[smallextended,envcountsect]{svjour3}       % onecolumn (second format)
%%\documentclass[twocolumn]{svjour3}          % twocolumn
%
%\smartqed  % flush right qed marks, e.g. at end of proof
%

%\usepackage{natbib}
\usepackage{amsmath}
\usepackage{amssymb}
\usepackage[margin=2.5cm]{geometry}
\usepackage{enumerate}  
\usepackage{graphicx}
\usepackage[symbol]{footmisc}
\usepackage{amsthm}
\usepackage[affil-it]{authblk}

\newcommand{\R}[0]{\mathbb{R}}

\newcommand{\N}[0]{\mathbb{N}}

\newcommand{\sE}[0]{\mathcal{E}}
\newcommand{\sF}[0]{\mathcal{F}}

\newcommand{\sH}[0]{\mathcal{H}}
\newcommand{\sA}[0]{\mathcal{A}}
\newcommand{\sB}[0]{\mathcal{B}}
\newcommand{\sC}[0]{\mathcal{C}}
\newcommand{\sS}[0]{\mathcal{S}}

\newcommand{\sR}[0]{\mathcal{R}}
\newcommand{\il}[0]{\langle}
\newcommand{\ir}[0]{\rangle}

\newcommand{\tr}[0]{\text{tr}\,}

%AB Macros
\newcommand{\vs}{\varsigma}
\newcommand{\zz}{z}
\newcommand{\cls}{\mathcal{S}}
\newcommand{\clu}{\mathcal{U}}
\newcommand{\clv}{\mathcal{V}}
\newcommand{\clk}{\mathcal{K}}
\newcommand{\cly}{\mathcal{Y}}
\newcommand{\cla}{\mathcal{A}}
\newcommand{\clb}{\mathcal{B}}
\newcommand{\clc}{\mathcal{C}}
\newcommand{\clh}{\mathcal{H}}
\newcommand{\clr}{\mathcal{R}}
\newcommand{\clt}{\mathcal{T}}
\newcommand{\clf}{\mathcal{F}}
\newcommand{\clp}{\mathcal{P}}
\newcommand{\clz}{\mathcal{Z}}
\newcommand{\clx}{\mathcal{X}}
\newcommand{\clw}{\mathcal{W}}
\newcommand{\clm}{\mathcal{M}}
\newcommand{\cle}{\mathcal{E}}
\newcommand{\Om}{\Omega}
\newcommand{\om}{\omega}
\newcommand{\NN}{\mathbb{N}}
\newcommand{\RR}{\mathbb{R}}
\newcommand{\la}{\lambda}
\newcommand{\veps}{\varepsilon}
\newcommand{\eps}{\epsilon}

\numberwithin{equation}{section}

\newtheorem{theorem}{Theorem}[section]

\newtheorem{lemma}[theorem]{Lemma}

\newtheorem{condition}{Condition}[section]
\theoremstyle{remark}
\newtheorem{remark}{Remark}[section]

\allowdisplaybreaks

\begin{document}

\title{Empirical Measure and Small Noise Asymptotics under Large Deviation Scaling for Interacting Diffusions}
\author{Amarjit Budhiraja \,and \,Michael Conroy%\footnotemark
}
\affil{University of North Carolina, Chapel Hill}
%\authorrunning{Amarjit Budhiraja \and Michael Conroy}
%\institute{A. Budhiraja \at
%             Department of Statistics and Operations Research \\
%University of North Carolina\\
%		Chapel Hill, NC 27599 \\
%              \email{budhiraj@email.unc.edu}           %  \\
%%             \emph{Present address:} of F. Author  %  if needed
%           \and
%           M. Conroy \at
%             Department of Statistics and Operations Research \\
%University of North Carolina\\
%		Chapel Hill, NC 27599 \\
%              \email{mconroy@live.unc.edu$^*$}  
%}

%\date{}

\maketitle
%\footnotetext[1]{Corresponding author}

\begin{abstract}
Consider a collection of particles whose state evolution is described through a system of interacting diffusions in which each particle
is driven by an independent individual source of noise and also by a small amount of noise that is common to all particles. The interaction between the particles is due to the common noise and also through the drift and diffusion coefficients that depend on the state empirical measure. We study large deviation behavior of the empirical measure process which is governed by two types of scaling, one corresponding to mean field asymptotics and the other to the Freidlin-Wentzell small noise asymptotics. 
Different levels of intensity of the small common noise lead to different types of large deviation behavior, and we provide a precise characterization of the various regimes. 
The rate functions can be interpreted as  the value functions of certain stochastic control problems in which there are two types of controls;  one of the controls is random and nonanticipative and 
arises from the aggregated contributions of the individual Brownian noises, whereas the second control is nonrandom and corresponds to the small common Brownian noise that impacts all particles.
We also study large deviation behavior of  interacting particle systems approximating various types of Feynman-Kac functionals. Proofs are based on stochastic control representations for exponential functionals of Brownian motions and on uniqueness results for weak solutions of stochastic differential equations associated with controlled nonlinear Markov processes
\newline

\noindent \textbf{AMS 2010 subject classifications}   60F10, 60K35, 60B10, 60H10, 93E20.\newline

\noindent \textbf{Keywords} Large deviation principle \and Weakly interacting diffusions \and Mean field systems with common noise \and Feynman-Kac functionals \and Controlled nonlinear Markov processes \and Mean field stochastic control problems \and Controlled McKean-Vlasov equations \and Friedlin-Wentzell asymptotics

%\keywords{Large deviation principle \and Weakly interacting diffusions \and Mean field systems with common noise \and Feynman-Kac functionals \and Controlled nonlinear Markov processes \and Mean field stochastic control problems \and Controlled McKean-Vlasov equations \and Friedlin-Wentzell asymptotics}
\end{abstract}

\section{Introduction}

In this work we study large deviation properties of interacting particle systems that are described through a certain collection of stochastic differential equations. Our main interest is in diffusions interacting through the empirical measure of the particle system with both individual and common sources of noises, given by a system of equations of the following form:
\begin{equation}\label{eq:interemp}
	\begin{aligned}
dX_i^n(t) &=  b(X_i^n(t), \mu^n(t))\,dt +  \sigma(X_i^n(t), \mu^n(t))\,dW_i(t) + \kappa(n) \alpha(X_i^n(t), \mu^n(t))\,dB(t), \\ 
X^n_i(0) &= x^n_i, \quad \mu^n(t) = \frac{1}{n} \sum_{i=1}^n \delta_{X^n_i(t)}, \quad 1 \le i \le n, \quad t\in [0,T], 
\end{aligned}
\end{equation}
where $\{W_i, i\in \NN\}$ are independent $m$-dimensional Brownian motions, $B$ is a $k$-dimensional Brownian motion, independent of $\{W_i, i\in \NN\}$, $b:\RR^d \times \clp(\RR^d) \to \RR^d$, $\sigma:\RR^d \times \clp(\RR^d) \to \RR^{d\times m}$, and
$\alpha:\RR^d \times \clp(\RR^d) \to \RR^{d\times k}$ are appropriate maps, and $\{x_i^n\}_{1\le i\le n} \subset \RR^d$ (see Section \ref{sec:intempdis} for precise conditions on the coefficients and the initial conditions).  
However, in order to motivate the questions of interest, we begin with the following elementary setting. 
For fixed $x \in \RR^m$ consider the empirical measure 
\begin{equation*}
	\mu^n \doteq \frac{1}{n} \sum_{i=1}^n \delta_{\{x+ W_i\}}.
\end{equation*}
By Sanov's theorem, $\{\mu^n\}$ satisfies a large deviation principle (LDP) on $\clp(\clc([0,T]: \RR^m))$ with rate function $I(\cdot)$ given as
\begin{equation}\label{eq:ratefnsanov}
	I(\gamma) = R(\gamma \| \theta_x),\; \gamma \in \clp(\clc([0,T]: \RR^m)).
\end{equation}
Here $\clc([0,T]: \RR^m)$ is the space of continuous functions from $[0,T]$ to $\RR^m$ equipped with the  uniform topology, and for a Polish space $S$, $\clp(S)$ denotes the space of probability measures on $S$ that is equipped with the topology of weak convergence. Also, $\theta_x$ denotes the Wiener measure with initial value $x$, and the quantity $R(\gamma \| \theta_x)$ denotes the relative entropy of $\gamma$ with respect to $\theta_x$. A precise definition of a large deviation principle is given in Section \ref{sec:notandcon}, however formally such a result says that, for large $n$, 
$$
P(\mu^n \in A) \approx \exp\left\{-n \inf_{\gamma \in A}I(\gamma)\right\} \; \mbox{ for Borel sets } A \subset \clp(\clc([0,T]: \RR^m)).$$

There is another representation for the rate function which, although notationally more demanding, is useful when studying more general settings. For a Polish space $S$, we will denote by $\clb(S)$ the Borel $\sigma$-field on $S$.
Let $\mathcal{R}$ denote the set of all finite measures $r$ on $\sB(\R^m \times [0,T])$ such that $r(\R^m\times [0,t]) = t$ for all $t\in [0,T]$. 
This space is equipped with the topology of weak convergence. Let $\mathcal{R}_1 \subset \mathcal{R}$ be defined as 
\[
	\mathcal{R}_1 \doteq \left\{ r\in \mathcal{R} : \int_{\R^m \times [0,T]} \|y\|\,r(dy\,dt) < \infty \right\}. 
\]
Then $\mathcal{R}_1$ is a Polish space when equipped with the the Wasserstein-$1$ metric. Under this metric, $r_n\to r$ in $\mathcal{R}_1$ if and only if $r_n\to r$ as a sequence in $\clr$
and $\int_{\RR^m \times [0,T]} y\, r_n(dy\, dt) \to \int_{\RR^m \times [0,T]} y\, r(dy\, dt)$. 
Let
\[
	\mathcal{Z}_1 \doteq \mathcal{X}\times \mathcal{R}_1\times \mathcal{W}, \;\mbox{ where }  \mathcal{X} = \mathcal{W} = \sC([0,T]:\R^m),
\]
and denote by $(X, \rho , W)$ the three coordinate maps on this space.
Define 
\[
	\mathcal{P}_2(\mathcal{Z}_1) \doteq \left\{ \Theta \in \mathcal{P}(\mathcal{Z}_1) : E_\Theta\left[ \int_{\R^m\times [0,T]} \|y\|^2\,\rho(dy\,dt) \right] < \infty \right\}, 
\]
where $E_\Theta$ denotes expectation on $(\mathcal{Z}_1, \mathcal{B}(\mathcal{Z}_1), \Theta)$.
Let $\mathcal{E}_1$ denote the subset of $\mathcal{P}_2(\mathcal{Z}_1)$ consisting of probability measures $\Theta$ such that, under $\Theta$, 
$W(t)$ is a standard Brownian motion with respect to the canonical filtration $\clf_t \doteq \sigma\{ X(s), W(s), \rho(A\times [0,s]); A \in \clb(\RR^m), s\le t\}$, and a.s.
\begin{equation}\label{eq:contbm}
	X(t) = x + \int_{\RR^m\times [0,t]} y\, \rho(dy\, ds) + W(t), \; t \in [0,T].
\end{equation}	
Then the rate function $I(\cdot)$ in \eqref{eq:ratefnsanov} has the following alternative representation:
\begin{equation}\label{eq:ratefncont}
	I(\gamma) = \inf_{\Theta \in \cle_1: [\Theta]_1 = \gamma}  E_{\Theta}\left[\frac{1}{2} \int_{\RR^m\times [0,T]} \|y\|^2 \rho(dy\, ds)\right],
\end{equation}
where $[\Theta]_1$ is the marginal of $\Theta$ on the first coordinate.
Viewing $\rho$ as a (relaxed) control, the right side of the above display gives a representation for the rate function as the value function of a stochastic control problem in which the goal is to produce a state process $X$ with a specified law 
$\gamma$ using the state dynamics \eqref{eq:contbm} and a (nonanticipative) control process $\rho$ which has the least cost, where the cost is given by the  expectation on the right side of \eqref{eq:ratefncont}. 

The above interpretation is a useful point of view and analogous stochastic control representations can be given more generally.  Consider for example the case where we are given an iid collection of $d$-dimensional diffusions $\{X_i\}_{i\in \NN}$ described through
the stochastic differential equations
\begin{equation}
\label{eq:iidsde}
X_i(t) = x + \int_0^t b(X_i(s))\,	ds + \int_0^t \sigma(X_i(s))\, dW_i(s), \quad t \in [0,T], \quad i \in \NN,
\end{equation}
where $x \in \R^d$ is an initial condition, and 
where for simplicity we assume that the coefficients $b: \RR^d \to \RR^d$ and $\sigma: \RR^d \to \RR^{d\times m}$ are Lipschitz functions so that the equations have a unique pathwise solution.
Letting 
\begin{equation}\label{eq:empiidsde}
	\mu^n = \frac{1}{n} \sum_{i=1}^n \delta_{X_i}, 
	\end{equation}
	the rate function associated with the LDP for $\mu^n$
on $\clp(\clc([0,T]: \RR^d))$ takes the same form as \eqref{eq:ratefncont} except $\clx = \sC([0,T]:\R^d)$ and the class $\cle_1$ is now the collection of all probability measures in $\mathcal{P}_2(\mathcal{Z}_1)$ under which
$W$ is as before and $(X, \rho, W)$ are related as
\begin{equation*}
	X(t) = x + \int_0^t b(X(s)) \,ds + \int_0^t \sigma(X(s))\, dW(s) + \int_{\RR^m\times [0,t]} \sigma(X(s)) y \,\rho(dy\, ds), \quad t \in [0,T].
\end{equation*}	
The system of equations in \eqref{eq:iidsde} have no interaction. We now introduce a small amount of coupling between the equations given through a  Brownian motion that is common to all particles as follows:
\begin{equation}\label{eq:smcoup}
X^n_i(t) = x + \int_0^t b(X^n_i(s))\,	ds + \int_0^t \sigma(X^n_i(s)) \,dW_i(s) + \kappa(n)\int_0^t \alpha(X^n_i(s)) \,dB(s), \quad t \in [0,T], \quad i \in \NN,	
\end{equation}
where $B$ is a $k$-dimensional standard Brownian motion independent of $\{W_i\}_{i\in \NN}$, $\alpha: \RR^d \to \RR^{d\times k}$
is a Lipschitz map, and $\kappa(n)\to 0$ as $n\to \infty$.
In this case, since $\{X^n_i\}_{1\le i\le n}$ are not independent, the large deviation behavior of 
\begin{equation}\label{eq:empsmdep}
	\mu^n = \frac{1}{n} \sum_{i=1}^n \delta_{X^n_i}
\end{equation} 
cannot be deduced from Sanov's theorem, and in fact this behavior crucially depends on the manner 
in which $\kappa(n)\to 0$. The measures $\mu^n$ in \eqref{eq:empiidsde} and in \eqref{eq:empsmdep} converge to the same limit but the rates of convergence as measured by the large deviation rate function are different. Indeed, as an elementary corollary of Theorems \ref{resulttype1} and \ref{thm:diffspeed}  we give a complete characterization of the convergence rate for different choices of the small noise coefficient $\kappa(n)$ (see Remark \ref{rem:bmcase}). Specifically, when $\kappa(n)= n^{-1/2}$ the rate function is governed by a different type of stochastic control problem than \eqref{eq:ratefncont}
that can be described as follows.
For $\varphi \in L^2([0,T]:\R^k)$, the space of square-integrable functions from $[0,T]$ into $\R^k$, let $\mathcal{E}_1[\varphi]$ denote the subset of $\mathcal{P}_2(\mathcal{Z}_1)$ consisting of all probability measures under which $W$ is,  as before, a $m$-dimensional Brownian motion with respect to the canonical filtration $\{\clf_t\}$, and the coordinate processes $X, \rho$, and $W$ are related to $\varphi$ through the equation
\begin{align*}
	X(t) &= x + \int_0^t b(X(s))\, ds + \int_0^t \sigma(X(s))\, dW(s) + \int_{\RR^m\times [0,t]} \sigma(X(s)) y \,\rho(dy\, ds) \\
	&\quad + \int_0^t \alpha(X(s)) \varphi(s)\, ds, \quad t \in [0,T].
\end{align*}
Then the rate function $I(\cdot)$ associated with the empirical measures $\mu^n$ in \eqref{eq:empsmdep}, with $\kappa(n) = n^{-1/2}$, is given as
\begin{equation}\label{eq:eq1135b}
	I(\gamma) = \inf_{\varphi \in L^2([0,T]:\R^k)} \left\{\inf_{\Theta\in \mathcal{E}_1[\varphi] : [\Theta]_1=\gamma} E_{\Theta}\left[ \frac{1}{2} \int_{\R^m\times[0,T]} \|y\|^2\,\rho(dy\,dt) \right] + \frac{1}{2} \int_0^T \|\varphi(t)\|^2\,dt \right\}.
\end{equation}
The right side of \eqref{eq:eq1135b} is once more the value function of a stochastic control problem, however this time there are two types of controls.  One of the controls, represented by $\rho$, is random and nonanticipative and 
arises from the aggregated contributions of the individual Brownian noises, whereas the second control, represented by $\varphi$, is nonrandom and corresponds to the small common Brownian noise that impacts all particles.

We will also study large deviation asymptotics for a second class of models that are given as particle approximations for Feynman-Kac functionals of the form
\begin{equation}\label{eq:fkfuncl}
	E \left[e^{\int_0^T c(X_1(s)) ds} g(X_1(T))\right], 
\end{equation}
where $g$ and $c$ are bounded and continuous functions and $X_1$ is given by \eqref{eq:iidsde} (with $i=1$).
Denote by $\clm_+(\RR^d)$ the space of finite measures on $\RR^d$ equipped with the topology of weak convergence (see Section \ref{sec:notandcon}), and consider the $\clc([0,T]:\clm_+(\RR^d))$-valued random variables $\nu^n$ defined as
\begin{equation}\label{eq:fkmzr}
	\nu^n(t) = \frac{1}{n}\sum_{i=1}^n e^{\int_0^t c(X^n_i(s)) ds + \kappa(n)\int_0^t \beta(X^n_i(s)) dB(s)} \delta_{X^n_i(t)}, \quad t \in [0,T],
\end{equation}
where $\{X^n_i\}$ are given by \eqref{eq:smcoup} and $\beta$ is a bounded and continuous function.
Then,  as $n \to \infty$, $\langle g, \nu^n(T)\rangle \doteq \int g(x)\, \nu^n(T)(dx)$ converges to the Feynman-Kac functional in \eqref{eq:fkfuncl}
for all choices of sequences $\kappa(n)\to 0$.  As a special case of Theorems \ref{resulttype2} and \ref{thm:FKdiffrat} 
we obtain large deviation principles for $\nu^n$, for different choices of $\kappa(n)$.

The above results correspond to the simple setting where the law of large number behavior of the system of particles is the same as that for an iid  particle system.  As noted previously, our main interest in the current work is in  interacting diffusions of the form in \eqref{eq:interemp}.
The law of large number behavior of such systems of particles is described by nonlinear equations of McKean-Vlasov type (cf. \cite{mck1,szn1}).
The large deviation behavior of the associated empirical measure process  is governed by two types of scaling, one corresponding to mean field asymptotics (as the number of particles $n \to \infty$) and the other to the Freidlin-Wentzell small noise asymptotics (as the noise intensity $\kappa(n)\to 0$). 

 Models as in \eqref{eq:interemp} are often referred to as {\it weakly interacting particle systems} and have been extensively studied, see
 \cite{mck1,brahep1,daw1,dawgar,tan2,oel,szn1,gramel3,shitan,mel} and many others. 
Originally motivated by problems in statistical physics, in recent years such systems have arisen in many applied probability problems such as stochastic networks \cite{antfrirobtib,grarob2}, information theory \cite{bormacpro,bormacpro2}, mathematical neuroscience \cite{balfasfautou}, population opinion dynamics \cite{gomgraleb}, nonlinear filtering \cite{kurxio1,del1}, and mathematical finance \cite{garpapyan,giespisow}, among others. 

In the setting where there is no common Brownian motion, i.e. $\kappa(n)=0$, large deviation principles for the empirical measure have been studied in \cite{dawgar}.  A different approach, based on certain variational representations for exponential functionals of finite dimensional Brownian motions \cite{boudup} and weak convergence arguments, was taken in \cite{buddupfis}. 
The latter paper, in contrast to \cite{dawgar}, allowed for degenerate diffusion coefficients and for a mean field interaction in the
diffusion coefficient. Large deviation properties of a system related to \eqref{eq:interemp} were studied recently in \cite{orr}, in which there is no common noise term but the independent Brownian motions $\{W_i\}$ are made to be small and vanish in the limit. 
In the systems with common noise that are considered in the current work, one needs to analyze the interplay between the contributions of two distinct sources of noise to non-typical behavior of the empirical measures. In the  rate function (see \eqref{eq:eq1135}),
this interplay is manifested through certain stochastic control problems in which there are two types of controls that play somewhat different roles in the dynamics. As already noted below \eqref{eq:eq1135b} in a simpler setting, the control that arises from the  individual noises is random and nonanticipative whereas the control from the common Brownian motion is nonrandom. In game theoretic terminology, the first control arises from the aggregated actions of the $n$ individual 
players whereas the second control corresponds to the action of a single major agent that impacts the dynamics of all $n$ players.

%%% Addition after review %%%
 
 %%%%%%%%%%%%%%%%%
Our results give a complete characterization of the asymptotic behavior for different choices of $\kappa(n)$. Specifically, taking $\kappa(n)= n^{-1/2}$, Theorems \ref{resulttype1}  and \ref{thm:diffspeed} show that
 rates of decay of $P(\mu^n \in A)$ for non-typical events $A$ are of the form $e^{-nI(A)}$, where the exponent $I(A)$ is described through a stochastic control problem with controls for both the aggregated player and the major agent.  However, when $\kappa(n)n^{1/2} \to 0$, the contribution of the common Brownian motion to deviations in the empirical measure becomes negligible and the rate function only involves the aggregated player control. Finally, when $\kappa(n)n^{1/2} \to \infty$, the decay rates of $P(\mu^n \in A)$ are slower , given as $e^{-\kappa(n)^{-2}I(A)}$, and this time the dominating contribution to deviations to the empirical measure are due to the common Brownian motion and the corresponding  stochastic control problem  is described in terms of  nonlinear Markov processes with deterministic controls. 
 %%% Addition after review %%%
 
 %%%%%%%%%%%%%%%%%
 
 %The LDP results herein have a similar flavor to those for two-scale stochastic systems, see for example the recent works \cite{husalspi,sunwanxuyan} which analyze the large deviations behavior of reaction-diffusion equations with slow and fast time scales. The results here describe related situations where different manners in which parameters approach their limit result in different rate function forms. 

In order to study rates of convergence of Feynman-Kac functionals analogous to those in \eqref{eq:fkfuncl}, we consider the following system of coupled equations:
\begin{equation}\label{eq:interempwtd}
	\begin{aligned}
dX_i^n(t) &=  b(X_i^n(t), \mu^n(t))\,dt +  \sigma(X_i^n(t), \mu^n(t))\,dW_i(t) + \kappa(n) \alpha(X_i^n(t), \mu^n(t))\,dB(t), \\ 
dA_i^n(t) &=  A_i^n(t) c(X_i^n(t), \mu^n(t))\,dt + A_i^n(t) \gamma^T(X_i^n(t), \mu^n(t))\,dW_i(t) 
		+ \kappa(n) A_i^n(t) \beta^T(X_i^n(t), \mu^n(t))\,dB(t), \\
X^n_i(0) &= x^n_i,\quad  A^n_i(0) = a^n_i, \quad  \mu^n(t) = \frac{1}{n} \sum_{i=1}^n \theta(A^n_i(t)) \delta_{X^n_i(t)}, \quad 1 \le i \le n, 
\quad t \in [0,T],
\end{aligned}
\end{equation}
where $c:\RR^d\times \clp(\RR^d) \to \RR$, $\gamma:\RR^d\times \clp(\RR^d) \to \RR^m$, and $\beta:\RR^d\times \clp(\RR^d) \to \RR^k$ are suitable maps and $\{(x^n_i,a^n_i)\}_{1\le i \le n} \subset \R^d \times \R_+$ (see Section \ref{sec:wtedempmzr} for precise conditions).
Note that in the special case where $\theta(x)=x$, $\gamma(x,\mu)=0$, and the coefficients do not depend on the empirical measure (i.e. $b(x,\mu)=b(x)$, and similarly for $\sigma, \alpha, c, \beta$),
$\mu^n$ reduces to \eqref{eq:fkmzr} (with $c$ replaced by $c - \kappa(n)^2\beta^T\beta/2$). In the general case the finite weighted empirical measures $\mu^n(t)$ take the form
\begin{equation}\label{eq:wtedempmzr}
	\mu^n(t) = \frac{1}{n}\sum_{i=1}^n \theta \left (e^{\int_0^t c_n(X^n_i(s), \mu^n(s)) ds + \int_0^t  \gamma^T(X_i^n(s), \mu^n(s))\,dW_i(s) + \kappa(n)\int_0^t \beta^T(X_i^n(s), \mu^n(s))\,dB(s)}\right)\delta_{X^n_i(t)}, 
	\end{equation}
where $c_n = c - \gamma^T\gamma/2 - \kappa(n)^2\beta^T\beta/2$, 
which covers a broad family of interacting particle models for  Feynman-Kac distribution flows (cf. \cite{del1}).
Our main result is Theorem \ref{resulttype2}, which gives a large deviation principle for $\{\mu^n\}$ in $\clc([0,T]:\clm^+(\RR^d))$ under appropriate conditions on the coefficients and the initial conditions.

The LDP results herein have a somewhat similar flavor to those for two-scale stochastic systems, see for example the recent works \cite{husalspi,sunwanxuyan} which analyze the large deviations behavior of reaction-diffusion equations with slow and fast time scales in a particular limiting regime of the parameters, as well as \cite{dupspi} which considers multiple regimes in a finite dimensional problem. As in the problems studied here, in two-scale systems
as well there are two natural parameters of interest, one (denoted as $\delta$) representing the speed of the fast system, and the other (denoted as
$\varepsilon$) representing the magnitude of the noise in the slow system. Depending on the manner in which $\delta$ and $\varepsilon$ approach $0$
in relation to each other, one expects different forms of large deviation behavior. Specifically the papers \cite{husalspi,sunwanxuyan} considered the regime $\delta/\sqrt{\varepsilon} \to 0$ while the other regimes, namely $\delta/\sqrt{\varepsilon} \to c \in (0,\infty)$ and
$\delta/\sqrt{\varepsilon} \to \infty$ were left open and are expected to be more challenging. Although there  are formal similarities with the problem studied here, it is not immediately clear whether the methods developed in the current paper can be directly used to study the harder regimes that were left unaddressed in \cite{husalspi,sunwanxuyan}.

We now make some comments on proof techniques. For an LDP for $\mu^n$ associated with the system in \eqref{eq:interemp}, the goal is to characterize the asymptotics of Laplace functionals of the form on the left side of \eqref{eq:seq}. Since $\mu^n$ is a functional of individual Brownian motions $W_i$ and the common Brownian motion $B$, using
the variational formula for exponential functionals of finite dimensional Brownian motions \cite{boudup}, one can give a stochastic control 
representation for the Laplace functional of interest (see Theorem \ref{reptype1}) that involves two types of controls.
 The first type, denoted as $u^n_i$, captures the deviations from the individual Brownian motions $W_i$ (one control for each $i$) and the other type, denoted as $v^n$, is associated with the common Brownian motion $B$. The two types of controls are scaled differently in the representation, and the analysis of this scaling, which depends on  $\kappa(n)$, 
is key to understanding  the different types of large deviation behavior for various choices of $\kappa(n)$.  
In proving the large deviation upper bound one needs to argue the convergence of the cost on the right side of \eqref{eq:seq} associated with near optimal choices of control sequences and to characterize the limits. For this, following \cite{buddupfis}, we consider certain augmented empirical measures $Q^n$ that include, in addition to particle states, the associated controls and the driving individual noises. The convergence of the costs (along subsequences) is shown by establishing the tightness of the collection $(Q^n, v^n)$. 
Tightness properties depend crucially on the rate at which $\kappa(n)\to 0$, and the forms of the limit points under different conditions on $\kappa(n)$ reveal the  different types of large deviation behavior. Next step is to characterize the form of the limit cost.
This is done by establishing that the limit points of $Q^n$ solve certain nonlinear controlled martingale problems. The controls arise from two sources, one is from the limits of $v^n$ (this is the control associated with the common noise); and the other  is from the second marginal of $Q^n$. This characterization leads to the forms of rate functions described previously.
In order to prove the lower bound one needs to construct a suitable  collection of controls for which the associated costs converge to  certain near optimal costs for the limiting stochastic control problems. This time tightness is not enough as one needs to prove convergence of (augmented) empirical measures to a specific limiting measure.
The key step in the proof of the lower bound is establishing uniqueness of weak solutions of stochastic differential equations associated with certain controlled nonlinear Markov processes. Such results are given in Lemmas \ref{pathunique} and \ref{pathunique2}. With such a uniqueness result one can then construct the desired sequence of controls and controlled processes on certain infinite product path spaces such that the associated state processes and costs converge in an appropriate manner.

Proof for large deviation asymptotics of Feynman-Kac measures as in \eqref{eq:interempwtd} rely on analyzing the properties of $\theta$.
One may  attempt to deduce this result as a corollary of large deviation results for \eqref{eq:interemp} by first establishing an LDP for 
the empirical measure of $\left(X^n_i(\cdot), A^n_i(\cdot)\right)$. However, with this approach, the conditions needed  appear to be too restrictive (see Remark \ref{rem:remwted}(a)).
We will instead analyze the weighted empirical measure $\mu^n$ in \eqref{eq:wtedempmzr} directly via variational representations for Laplace functionals
associated with $\mu^n$.
We prove the result under two different types of conditions.  The first set of conditions requires in particular that $\gamma=0$ and $\theta$ is a Lipschitz function (e.g. $\theta(x)=x$). When $\theta(x)=x$, and $\gamma=0$ is violated, a large deviation principle is not available even in the most elementary settings (see Remark \ref{rem:remwted}(c)).  The second set of conditions allows $\gamma$ to be more general but imposes logarithmic growth conditions on $\theta$. 

The paper is organized as follows. Section \ref{sec:mainresults} introduces the models, gives our precise assumptions, and presents the main results. In particular, Section \ref{sec:intempdis} considers the empirical measure problem while
Section \ref{sec:wtedempmzr} presents results for interacting particle models for Feynman-Kac functionals. The first two sections consider the case where the common noise intensity $\kappa(n)$ is of order $n^{-1/2}$, and in section \ref{sec:sizeofnoise} we present results for other choices of $\kappa(n)$ (i.e. of larger or smaller order than $n^{-1/2}$). Sections \ref{sec:proofthm1} through \ref{sec:proofsketchthm3}
contain the proofs of our main results.  The two appendices contain proofs of some auxiliary results.

\subsection{Notation and Conventions} 
\label{sec:notandcon}
We will denote by $\sC([0,T]: \R^d)$ the space of continuous functions from $[0,T]$ to $\R^d$, equipped with the sup-norm topology corresponding to the distance 
$$d(\psi_1, \psi_2) = \sup_{0\le t\le T} \|\psi_1(t) - \psi_2(t)\| \quad \mbox{for} \quad \psi_1, \psi_2 \in \sC([0,T]: \R^d).$$
For a Polish space $S$, $\sC(S)$ will denote the space of continuous functions from $S$ into $\R$, and $\sC_b(S)$ will denote the space of continuous and bounded functions from $S$ into $\R$. 
We denote by $L^2([0,T], \R^k)$ the space of functions from $[0,T]$ into $\R^k$ that are square integrable with respect to Lebesgue measure. 
Let $\mathcal{P}(S)$ denote the space of all probability measures on $S$ equipped with the usual weak convergence topology. If $S$ is a product space of the form $S_1 \times \cdots \times S_k$, then for $\Theta \in \mathcal{P}(S)$ and $i = 1, \ldots, k$, we denote by $[\Theta]_i$ the $i$th marginal of $\Theta$, which is a probability measure on $S_i$. 
Notations $[\Theta]_{(i_1, \ldots, i_r)}$, for $1\le r \le k$ and $1 \le i_1 < i_2 < \cdots <i_r \le k$, will be interpreted in a similar manner.
Let $\mathcal{M}_+(S)$ denote the space of finite positive measures on $S$, also with the topology of weak convergence. 
In particular, for $\gamma_n, \gamma \in \mathcal{M}_+(S)$, $\gamma_n \to \gamma$ under this topology if and only if for every $f \in \sC_b(S)$, $\int f \,d\gamma_n \to \int f \,d\gamma$.
For $\gamma \in \mathcal{M}_+(S)$ and a $\gamma$-integrable function $f: S\to \RR$, we will denote $\int_S f(x)\,\gamma(dx)$ as 
$\il f, \gamma\ir$.
$\sC^k(\RR^d)$ [resp. $\sC^k_c(\RR^d)$] will denote the space of functions [resp. functions with compact support] from $\RR^d$ to $\RR$ that are continuously differentiable  up to order $k$. 
For a bounded map $f: S \to \RR$, we denote $\sup_{x\in S}|f(x)|$ as $\|f\|_{\infty}$.

We call a function $I : S \to [0, \infty]$ a rate function if it has compact level sets, i.e. for each $a \in [0, \infty)$, $\{x \in S : I(x) \le a\}$ is compact in $S$. A collection $\{X_n\}_{n\in \N}$ of $S$-valued random variables is said to satisfy the {\it Laplace principle on} $S$ {\it with rate function} $I$ (and speed $a(n)\to \infty$) if for every $F \in \sC_b(S)$, 
\begin{equation}\label{LPdef}
	\lim_{n\to\infty} \frac{1}{a(n)} \log E \left[ e^{-a(n) F(X_n)} \right] = - \inf_{x\in S} \left( F(x) + I(x) \right). 
\end{equation}
The {\it Laplace upper bound} (with rate function $I$ and speed $a(n)$) refers to the inequality (for every $F \in \sC_b(S)$)
\begin{equation}\nonumber\label{LUBdef}
	\liminf_{n\to\infty}\left( -\frac{1}{a(n)} \log E\left[ e^{-a(n) F(X_n)} \right] \right) \ge \inf_{x\in S} \left( F(x) + I(x) \right), 
\end{equation}
and  the {\it Laplace lower bound}  (with rate function $I$ and speed $a(n)$) refers to the complementary inequality
\begin{equation}\nonumber\label{LLBdef}
	\limsup_{n\to\infty}\left( -\frac{1}{a(n)} \log E\left[ e^{-a(n) F(X_n)} \right] \right) \le \inf_{x\in S} \left( F(x) + I(x) \right). 
\end{equation}
It is well known \cite{dupell4} that the Laplace principle (resp. Laplace upper bound, Laplace lower bound) holds with rate function $I$ (and speed $a(n)\to \infty$) if and only if the large deviation principle (resp. large deviation upper bound, large deviation lower bound) holds with rate function $I$ (with the same speed function), where the {\it large deviation upper bound} refers to the inequality
\[
	\limsup_{n\to\infty} \frac{1}{a(n)} \log P(X_n \in F) \le - \inf_{x\in F} I(x) \mbox{ for each closed $F \subset S$},
\]
the {\it large deviation lower bound} to the inequality 
\[
	\liminf_{n\to\infty} \frac{1}{a(n)} \log P(X_n \in G) \ge -\inf_{x\in G} I(x) \mbox{ for each open $G \subset S$},
\] 
and the {\it large deviation principle} to the validity of both sets of inequalities. 
In view of this equivalence, throughout this work we will only consider Laplace asymptotics.

\section{Main Results}
\label{sec:mainresults}
In this section we introduce the models of interest, state our precise assumptions, and present the main results.

  \subsection{Diffusions Interacting Through the Empirical Distribution}
\label{sec:intempdis}

Consider a filtered probability space $(\Om, \clf, P, \{\clf_t\})$ where the filtration satisfies the usual conditions. Let $\{W_i\}_{i=1}^{\infty}$ be an iid collection of $m$-dimensional Brownian motions on this space.
Also, let $B$ be a $k$-dimensional Brownian motion that is independent of the collection $\{W_i\}_{i=1}^{\infty}$ .  We assume that, for every $s$, $\{W_i(t)-W_i(s), B(t)-B(s), i \ge 1, t \ge s\}$ is independent of $\clf_s$, so that $W_i$ and $B$ are $\{\clf_t\}$-martingales.

Consider, for $n \in \NN$, a collection of stochastic processes $\{X^n_i\}_{i=1}^n$ with sample paths in $\clc([0,T]: \RR^d)$ given by the  system of equations in \eqref{eq:interemp} where $\kappa: \NN \to \RR_+$
satisfies $\kappa(n)\to 0$ as $n\to \infty$, and $b,\sigma$, and $\alpha$ are suitable coefficients.

We will make the following assumption on the initial conditions.

\begin{condition}\label{empmzrinit} There exists $\xi_0\in \mathcal{P}(\R^d)$ such that for all $\xi_0$-integrable $f: \R^d \to \R$,
	\[
		\lim_{n\to\infty} \frac{1}{n}\sum_{i=1}^n f(x_i^n) = \il f, \xi_0 \ir.	
	\]
	Furthermore,
	$\sup_{n\ge 1} \frac{1}{n} \sum_{i=1}^n \|x_i^n\|^2 < \infty. 
	$

\end{condition}
We will require the coefficients $b, \alpha$, and $\sigma$ to be  Lipschitz continuous.  In order to state this condition precisely, we recall the bounded-Lipschitz metric on the space of measures. Recall that $\mathcal{M}_+(\R^d)$ denotes the space of positive measures on $\R^d$  equipped with the weak topology. This topology can be metrized by the bounded Lipschitz metric
\[
	d_{BL}(\nu_1, \nu_2) = \sup_{f\in BL(\R^d)} \left| \il f, \nu_1 \ir - \il f, \nu_2 \ir \right|, \quad \nu_i \in \mathcal{M}_+(\R^d), \quad i=1,2,
\]
where 
\[
	BL(\R^d) = \left\{f\in \sC(\R^d) : \|f\|_{\infty} \le 1 \text{ and $f$ is Lipschitz with  Lipschitz constant bounded by 1}\right\}.  
\]
The following is the main condition on the coefficients.

\begin{condition}\label{Lip} The map $b$ is Lipschitz and the maps $\sigma, \alpha$ are bounded and Lipschitz from $\RR^d \times \mathcal{M}_+(\R^d)$ to $\RR^d$, $\RR^{d\times m}$, and $\RR^{d\times k}$ respectively. Namely,
	there is a  $K \in (0,\infty)$ such that for each $x, y\in \R^d$ and $\mu, \nu \in \mathcal{M}_+(\R^d)$, 
	
	\begin{enumerate}
	
	\item $\displaystyle  \|\sigma(x,\mu)\|^2 + \|\alpha(x, \mu)\|^2  \le K^2$, and 
	
	\item  $\displaystyle \|b(x, \mu) - b(y, \nu)\| + \|\sigma(x,\mu) - \sigma(y, \nu)\| + \|\alpha(x, \mu) - \alpha(y, \nu)\| \le K\left( \|x-y\| + d_{BL}(\mu,\nu)\right)$.
	
	\end{enumerate}

\end{condition}
For Theorem \ref{resulttype1} we can replace $\clm_+(\R^d)$ with $\clp(\R^d)$ in the above condition, however it is convenient to formulate the condition as above in order to have a common set of conditions for Theorems \ref{resulttype1} and \ref{resulttype2}.
For the LDP we will assume in addition that the  diffusion coefficient $\sigma$ depends on the state of the system only through the empirical measure:

\begin{condition}\label{sigmagamma} For  $x \in \R^d$ and $\mu \in \mathcal{M}_+(\R^d)$, $\sigma(x, \mu) = \sigma(\mu)$.
\end{condition}

Under Condition \ref{Lip} it follows by standard arguments  that for each $n$ there is a unique pathwise solution of \eqref{eq:interemp}. 
Abusing notation, let $\mu^n$ be  a random variable with values in $\clp(\clc([0,T]: \RR^d))$ defined as $\mu^n \doteq \frac{1}{n}\sum_{i=1}^n \delta_{X^n_i}$. Note that $\mu^n(s)$ is the (random) marginal distribution at time instant $s$ associated with $\mu^n$. 
We will occasionally denote the map $t \mapsto \mu^n(t)$, as $\mu^n(\cdot)$ which is viewed as a $\clp(\RR^d)$-valued stochastic process with continuous sample paths or, equivalently, a random variable with values in $\clc([0,T]:\clp(\RR^d))$.

Our first main result gives a large deviation principle for $\mu^n$ in $\clp(\clc([0,T]: \RR^d))$. We begin by introducing the associated rate function.
This function will be described in terms of solutions to certain controlled McKean-Vlasov equations which we now introduce. 
Recall the Polish spaces $\clr$ and $\clr_1$ of relaxed controls from the Introduction.

Given $\varphi \in L^2([0, T] : \R^k)$ and a continuous map $\nu: [0,T] \to \mathcal{P}(\R^d)$, consider the controlled nonlinear SDE $\mathcal{S}_1[\varphi, \nu]$, on some filtered probability space $(\bar\Om, \bar\clf, \bar P, \{\bar\clf_t\})$, equipped with an $m$-dimensional $\bar\clf_t$-Brownian motion $W$:
\begin{equation} \label{eq: controlled SDE}
	\sS_1[\varphi, \nu] \doteq \left\{
	\begin{aligned}
	d\bar{X}(t) &= b(\bar{X}(t), \nu(t))\,dt + \left(\int_{\R^m} \sigma(\bar{X}(t), \nu(t))y\,\rho_t(dy)\right)\,dt +\sigma(\bar X(t), \nu(t))\,dW(t) \\
			 &\quad + \alpha(\bar{X}(t), \nu(t)) \varphi(t)\,dt, \\
	\bar{X}(t) &\sim \nu(t), \quad t\in [0,T], \quad \nu(0) = \xi_0,   
	\end{aligned} \right. 
\end{equation}
where $\xi_0 \in \mathcal{P}(\R^d)$ is as in Condition \ref{empmzrinit}. In the above equation $\rho$ is an $\clr_1$-valued random variable such that $\rho([0,t]\times A)$ is  $ \bar \clf_t$-measurable for every $A \in \clb(\RR^m)$
and $t \in [0,T]$, and $\bar X$ is an $\bar \clf_t$-adapted stochastic process with sample paths in $\clc([0,T]: \RR^d)$. The notation $\bar X(t)\sim \nu(t)$ signifies that
$\bar X(t)$ has probability distribution $\nu(t)$, i.e. $\bar P \circ \bar X(t)^{-1} = \nu(t)$.
We note that $\sS_1[\varphi, \nu]$ is driven by two types of controls, the control $\varphi$ is a deterministic function whereas  $\rho$ 
represents a random control in the dynamics.

A triple $(\bar{X}, \rho, W)$ that solves $\sS_1[\varphi, \nu]$ for a given $\varphi$ and $\nu$ can be viewed as a $\mathcal{Z}_1$-valued random variable, where
\[
	\mathcal{Z}_1 \doteq \mathcal{X}\times \mathcal{R}_1\times \mathcal{W}, \;  \mathcal{X} \doteq \sC([0,T]:\R^d), \mbox{ and } \mathcal{W} \doteq \sC([0,T]:\R^m).
\]
The distribution of $(\bar{X}, \rho, W)$ on $\mathcal{Z}_1$ is an element of $\mathcal{P}(\mathcal{Z}_1)$ and is called a {\em weak solution} of the controlled SDE $\sS_1[\varphi, \nu]$. Define 
\[
	\mathcal{P}_2(\mathcal{Z}_1) \doteq \left\{ \Theta \in \mathcal{P}(\mathcal{Z}_1) : E_\Theta\left[ \int_{\R^m\times [0,T]} \|y\|^2\,\rho(dy\,dt) \right] < \infty \right\}, 
\]
where in the above display $E_\Theta$ denotes expectation on $(\mathcal{Z}_1, \sB(\mathcal{Z}_1), \Theta)$ and, abusing notation, $\rho$ is the second coordinate map on $(\mathcal{Z}_1, \sB(\mathcal{Z}_1))$, i.e. 
\[
	\rho(x, r, w) \doteq r, \quad (x, r, w) \in \mathcal{Z}_1.
\]
Note that, the above expectation can be written as
\[
	E_\Theta\left[ \int_{\R^m\times [0,T]} \|y\|^2\,\rho(dy\,dt) \right] = \int_{\mathcal{R}_1} \int_{\R^m\times[0,T]} \|y\|^2\,r(dy\,dt)\,[\Theta]_2(dr). 
\]

For  $\Theta\in \mathcal{P}(\mathcal{Z}_1)$, let $\nu_\Theta: [0, T] \to \mathcal{P}(\R^d)$ be defined as
\[
	\nu_\Theta(t)(B) \doteq \Theta\left\{(x, r, w)\in \mathcal{Z}_1 : x(t)\in B\right\}, \quad B\in \sB(\R^d). 
\]
Note that if $\Theta$ is a weak solution of $\sS_1[\varphi, \nu]$, then $\nu(t)= \nu_{\Theta}(t)$ for all $t \in [0,T]$. For a given $\varphi \in L^2([0,T]:\R^k)$, let $\mathcal{E}_1[\varphi]$ denote the subset of $\mathcal{P}_2(\mathcal{Z}_1)$ given as
\begin{equation}\label{eq:cle1phi}
	\mathcal{E}_1[\varphi] \doteq \left\{ \Theta \in \mathcal{P}_2(\mathcal{Z}_1) :  \Theta\;\text{is a weak solution to $\sS_1[\varphi, \nu_{\Theta}]$}\right\}.  \end{equation}
Then the candidate rate function for the LDP for $\mu^n$ is 
\begin{equation}\label{eq:eq1135}
	I_1(\nu) \doteq \inf_{\varphi \in L^2([0,T]:\R^k)} \left\{\inf_{\Theta\in \mathcal{E}_1[\varphi] : [\Theta]_1=\nu} E_{\Theta}\left[ \frac{1}{2} \int_{\R^m\times[0,T]} \|y\|^2\,\rho(dy\,dt) \right] + \frac{1}{2\la^2} \int_0^T \|\varphi(t)\|^2\,dt \right\}, 
\end{equation}
for $\nu \in \mathcal{P}(\clx)$, where $\la \in (0,\infty)$ is introduced below. 

The following is the first main result of this work.  It gives an LDP in the case $\kappa(n)$ is of the order $n^{-1/2}$. Later in Section \ref{sec:sizeofnoise} we will consider the large deviation behavior when $\kappa(n)$ is of smaller or higher order than $n^{-1/2}$.
Part 1 below gives a law of large numbers result while part 2 establishes a large deviation principle.
Denote the element $\delta_{\{0\}}(dy)\, dt$ of $\clr$ as $r^o$.
\begin{theorem}\label{resulttype1} 
	Suppose that Conditions \ref{empmzrinit}, \ref{Lip} hold and that $\kappa(n)\to 0$ as $n\to \infty$.
\begin{enumerate}
\item   There is a  $\mu^* \in \mathcal{P}(\clx)$ such that $\mu^n \to \mu^*$ in probability. Furthermore, $\mu^*$ can be characterized as the first marginal
$[\Theta]_1$ of $\Theta$, where $\Theta$ is the unique element in $\clp(\clz_1)$ that is a weak solution of $\cls_1[0, \nu_{\Theta}]$ and satisfies $[\Theta]_2 = \delta_{r^o}$.
\item Suppose in addition that   Condition \ref{sigmagamma} is satisfied and that $\sqrt{n}\kappa(n) \to \la \in (0,\infty)$. Then $\{\mu^n\}_{n\in\N}$ satisfies a large deviation principle on $\mathcal{P}(\clx)$ with speed $n$ and rate function $I_1$.
\end{enumerate}
\end{theorem}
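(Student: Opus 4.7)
The argument uses the weak convergence (stochastic control) approach to large deviations, based on the variational representation for exponential functionals of Brownian motion. For part 1, since $\kappa(n)\to 0$, standard uniform $L^2$ estimates for \eqref{eq:interemp} together with the boundedness and Lipschitz hypotheses in Condition~\ref{Lip} yield tightness of $\{\mu^n\}$ in $\clp(\clx)$; the common-noise term contributes $O(\kappa(n))$ in $L^2$ and therefore vanishes. Any subsequential limit (in probability) is supported on weak solutions of $\sS_1[0,\nu_\Theta]$ with $[\Theta]_2=\delta_{r^o}$, so invoking the weak uniqueness result Lemma~\ref{pathunique} (specialized to $\varphi\equiv 0$ and the trivial relaxed control) identifies the limit as a single $\mu^*$ and gives convergence in probability.

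For part 2 the starting point is the variational representation (Theorem~\ref{reptype1}), which for $F\in \sC_b(\clp(\clx))$ expresses
\[
	-\tfrac{1}{n}\log E\bigl[e^{-nF(\mu^n)}\bigr] = \inf_{(u^n,v^n)} E\Bigl[\tfrac{1}{2n}\sum_{i=1}^n\int_0^T\|u_i^n(s)\|^2\,ds + \tfrac{1}{2n}\int_0^T\|v^n(s)\|^2\,ds + F(\bar\mu^n)\Bigr],
\]
where $\bar\mu^n$ is the empirical measure of the controlled diffusions obtained by shifting $W_i$ by $\int_0^\cdot u_i^n$ and $B$ by $\int_0^\cdot v^n$. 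Rescaling $\varphi^n\doteq(\la/\sqrt{n})v^n$ converts the common-noise drift contribution into $\alpha\varphi^n$ and its running cost into $\tfrac{1}{2\la^2}\int_0^T\|\varphi^n(s)\|^2\,ds$, which is exactly the balance that makes the rate function $I_1$ appear in the limit under $\sqrt{n}\kappa(n)\to\la$.

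For the Laplace upper bound I would take $\varepsilon$-optimal controls and encode the individual controls in the augmented empirical measures
\[
	Q^n \doteq \frac{1}{n}\sum_{i=1}^n \delta_{(\bar X_i^n,\,\rho_i^n,\,W_i)},\qquad \rho_i^n(dy\,dt)=\delta_{u_i^n(t)}(dy)\,dt,
\]
so that the individual cost becomes $\tfrac12 E_{Q^n}\!\left[\int \|y\|^2\,\rho(dy\,ds)\right]$. A uniform cost bound yields tightness of $(Q^n,\varphi^n)$ in $\clp(\clz_1)\times L^2([0,T]:\RR^k)$, the second factor being endowed with its weak topology. Passing to a subsequential limit $(Q,\varphi)$ and invoking Condition~\ref{sigmagamma} (so that $\sigma$ depends on the particle state only through the empirical marginal, which converges to a deterministic flow) lets one identify $Q$ as an element of $\cle_1[\varphi]$ with $\nu_Q$ equal to its first marginal's flow. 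Fatou together with weak lower semicontinuity of the quadratic costs then gives $\liminf_n \ge F([Q]_1)+I_1([Q]_1) \ge \inf_\nu\{F(\nu)+I_1(\nu)\}$. For the Laplace lower bound, given $\delta>0$ one picks $\delta$-optimal $(\varphi^\delta,\Theta^\delta)$ with $\Theta^\delta\in\cle_1[\varphi^\delta]$, and uses the weak uniqueness result Lemma~\ref{pathunique2} to construct, on a product probability space driven by $\{W_i\}$ and common deterministic input $\varphi^\delta$, an i.i.d.\ system whose empirical measures converge a.s.\ to $[\Theta^\delta]_1$; mapping back via Girsanov produces admissible controls $(u_i^{n,\delta},v^{n,\delta}=\sqrt{n}\varphi^\delta/\la)$ whose cost converges to $F([\Theta^\delta]_1)+I_1([\Theta^\delta]_1)$, and $\delta\to 0$ closes the bound.

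The main obstacle is the limit characterization in the upper bound: one must show simultaneously that (i) the weak limit of $\varphi^n$ is deterministic (intuitively because all randomness coupled to $\varphi^n$ propagates through $B$, whose direct stochastic contribution disappears from the state dynamics after the $\kappa(n)$ rescaling), and (ii) the nonlinear self-consistency $\bar X(t)\sim\nu_Q(t)$ survives the limit, which couples the random limit $Q$ with its own first-marginal flow. Condition~\ref{sigmagamma} is used precisely here so that $\sigma$ does not fluctuate with particle positions, allowing the weak uniqueness lemmas to apply and the empirical measure to behave deterministically in the limit; a standard level-set tightness argument then shows $I_1$ is a rate function.
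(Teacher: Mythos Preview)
Your overall architecture matches the paper's: variational representation, augmented empirical measures $Q^n$, tightness plus martingale-problem limit identification for the upper bound, and a weak-uniqueness-based construction for the lower bound. Two points, however, are misplaced.

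First, Condition~\ref{sigmagamma} is \emph{not} used in the upper bound at all; the paper carries out the upper bound (tightness Lemma~\ref{tightness} and limit characterization Lemma~\ref{limit1}) under Conditions~\ref{empmzrinit} and~\ref{Lip} only. The restriction $\sigma(x,\mu)=\sigma(\mu)$ is needed precisely in the \emph{lower bound}, in the weak-uniqueness Lemma~\ref{pathunique} (not Lemma~\ref{pathunique2}, which is the Feynman--Kac analogue), where the Gronwall argument on $E_{\hat\Theta}\|X^{(1)}-X^{(2)}\|^2$ requires that the $\sigma\,u$ drift term produce only a $d_{BL}(\nu^{(1)},\nu^{(2)})\|u\|$ factor rather than an additional $\|X^{(1)}-X^{(2)}\|\,\|u\|$ factor that cannot be closed.

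Second, your ``main obstacle'' (i) --- that the weak limit of the common-noise control must be shown deterministic --- is a non-issue. In the paper the subsequential limit $(Q,v)$ is allowed to be random; Lemma~\ref{limit1} shows $Q(\omega)\in\cle_1[v(\omega)]$ for $P^*$-a.e.\ $\omega$, and then the upper bound follows because
\[
E^*\Bigl[E_Q\bigl[\tfrac12\!\int\|y\|^2\rho(dy\,dt)\bigr]+\tfrac{1}{2\la^2}\!\int_0^T\|v\|^2\,dt+F([Q]_1)\Bigr]
\ge \inf_{\varphi}\inf_{\Theta\in\cle_1[\varphi]}\bigl\{\cdots\bigr\},
\]
the infimum being taken pointwise inside the expectation. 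Likewise, your ``obstacle'' (ii) is automatic: $\nu_Q$ is by definition the time-marginal flow of $[Q]_1$, so self-consistency holds tautologically; the substantive content of Lemma~\ref{limit1} is the martingale-problem verification that the coordinate process solves $\sS_1[v,\nu_Q]$ under $Q$.
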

Proof of Theorem \ref{resulttype1}  will be given in Section \ref{sec:proofthm1}.  
\begin{remark}\label{rem:contprinc}
	Since the map $\nu \mapsto \{t\mapsto \nu(t)\}$ from $\clp(\clx)$ to $\clc([0,T]:\clp(\R^d))$ is a continuous map, we have by the contraction principle that $\mu^n(\cdot)$ regarded as a sequence of random variables with values in $\clc([0,T]:\clp(\R^d))$ satisfies an LDP as well.
\end{remark}

\subsection{Interacting Particle Systems for Feynman-Kac Functionals}
\label{sec:wtedempmzr}
In this section we consider a setting where the interaction term is given in terms of a weighted empirical measure of the states of the particles and where the weights are governed by another system of stochastic equations.  Let $(\Om, \clf, P, \{\clf_t\})$, $\{W_i\}$, $B$ be as in Section \ref{sec:intempdis}. Consider for $n \in \NN$, a collection of
stochastic processes $\{(X^n_i, A^n_i)\}_{i=1}^n$ with sample paths in $\clc([0,T]: \RR^d\times \RR_+)$ given by the  system of equations
in \eqref{eq:interempwtd}.
Here 
$\theta: \R_+ \to \R_+$,
 $\kappa: \NN \to \RR_+$, and $b, \sigma, \alpha, c, \gamma$, and $\beta$ are suitable maps.
 Note that $\mu^n(t)$ in this set of equations can also be represented as on the right side of \eqref{eq:wtedempmzr}.
In addition to Condition \ref{Lip} on the coefficients, we will assume the following condition.
 \begin{condition}
\label{LipA} The maps $c, \gamma, \beta$ are bounded and Lipschitz from $\RR^d \times \mathcal{M}_+(\R^d)$ to $\RR^d$, $\RR^{m}$, and $\RR^{k}$ respectively. Namely,
there is a  $K \in (0,\infty)$ such that for each $x, y\in \R^d$ and $\mu, \nu \in \mathcal{M}_+(\R^d)$, 
\begin{enumerate}
	\item $\displaystyle  \|c(x, \mu)\|^2 + \|\gamma(x,\mu)\|^2 + \|\beta(x, \mu)\|^2 \le K^2$, and 
	
	\item  $\displaystyle  \|c(x, \mu) - c(y, \nu)\| + \|\gamma(x,\mu) - \gamma(y, \nu)\| + \|\beta(x, \mu) - \beta(y, \nu)\| \le K\left( \|x-y\| + d_{BL}(\mu,\nu)\right)$.
\end{enumerate}
\end{condition}
The weights in the random measure $\mu^n(t)$ are determined through the map $\theta$ on which we  make the following assumption.

\begin{condition}\label{theta} Either one of the following hold:
	\begin{enumerate}[(a)]
	\item $\theta \in \sC^2(\R_+)$ and 
	\begin{equation}\label{eq:thetderbds}
		\sup_{x\in \R_+} |\theta'(x) x|  +  \sup_{x\in \R_+} |\theta''(x) x^2| < \infty. 
	\end{equation} 
	
	\item There is a  $L \in (0,\infty)$ such that $|\theta(x) - \theta(y)| \le L|x-y|$ for all $x, y\in \R_+$. 
	\end{enumerate}
\end{condition}
Condition \ref{theta}(b) simply says that $\theta$ is a Lipschitz function. It is easily checked that under Condition \ref{theta}(a), $\theta$ is Lipschitz as well. The latter  condition,  in addition, implies  an (at most) logarithmic growth on $\theta$.

Under Conditions \ref{Lip}, \ref{LipA}, and \ref{theta}, there is a unique pathwise solution to the system of equations in \eqref{eq:interempwtd}.
Although the proof is standard, we provide a sketch in Appendix \ref{sec:llnwted}.
The object of interest is the stochastic process $\{\mu^n(t)\}_{t\in [0,T]}$ which is regarded as a  random variable with values in $\sC([0,T] : \mathcal{M}_+(\R^d))$.
Our second main result gives a large deviation principle for $\mu^n(\cdot)$ in this path space. 
We introduce two additional conditions that will be needed for this result.
For the initial values $\{(a^n_i, x^n_i)\}$ in \eqref{eq:interempwtd} we will assume the following in addition to Condition \ref{empmzrinit}:
\begin{condition}\label{wtdmzrinit} 
	There exists $\eta_0 \in \mathcal{P}(\R^d\times \R_+)$ such that for all $\eta_0$-integrable $g: \R^d \times \R_+ \to \R$, 
		\[
			\lim_{n\to\infty}\frac{1}{n}\sum_{i=1}^n g(x^n_i, a_i^n) = \il g, \eta_0 \ir. 
		\]
	Furthermore,
		\[
		\sup_{n\ge 1} \frac{1}{n} \sum_{i=1}^n (a_i^n)^2 < \infty \quad \mbox{and}\quad \sup_{n\ge 1} \frac{1}{n} \sum_{i=1}^n (\log a_i^n)^- < \infty.
	\]
\end{condition}
\noindent Note that when both Conditions \ref{empmzrinit} and \ref{wtdmzrinit} hold, we have $[\eta_0]_1 = \xi_0$, where $[\eta_0]_1$ is the marginal distribution of $\eta_0$ on $\R^d$.

Finally, for the large deviations result, in addition to Condition \ref{sigmagamma}, we will assume that the diffusion coefficient $\gamma$ depends on the state of the system only through the empirical measure, namely:

\begin{condition}\label{gammaconst}
	For  $x \in \R^d$ and $\mu \in \mathcal{M}_+(\R^d)$, $\gamma(x, \mu) = \gamma(\mu)$.
\end{condition}

We now present the rate function that will govern the LDP for $\{\mu^n(\cdot)\}$.
 Given $\varphi \in L^2([0, T] : \R^k)$ and  $\nu \in \clc([0,T]: \mathcal{M}_+(\R^d))$ as in Section \ref{sec:intempdis}, consider the controlled nonlinear SDE $\mathcal{S}_2[\varphi, \nu]$ given on some filtered probability space $(\bar\Om, \bar\clf, \bar P, \{\bar\clf_t\})$, equipped with an $m$-dimensional $\bar\clf_t$-Brownian motion $W$:
\begin{equation}\label{controlledSDEtype2}
	\sS_2[\varphi, \nu] \doteq \left\{
	\begin{aligned}
	d\bar{X}(t) &= b(\bar{X}(t), \nu(t))\,dt + \left(\int_{\R^m} \sigma(\bar{X}(t), \nu(t))y\,\rho_t(dy)\right)\,dt +\sigma(\bar{X}(t), \nu(t))\,dW(t) \\
			 &\quad + \alpha(\bar{X}(t), \nu(t))\varphi(t)\,dt, \\
	d\bar{A}(t) &= \bar{A}(t) c(\bar{X}(t), \nu(t))\,dt + \left(\int_{\R^m} \bar{A}(t)\gamma^T(\bar{X}(t), \nu(t))y\,\rho_t(dy) \right)\,dt\\
	   		&\quad +  \bar{A}(t)\gamma^T(\bar{X}(t), \nu(t))\,dW(t) 
			+ \bar{A}(t)\beta^T(\bar{X}(t), \nu(t))\varphi(t)\,dt, \\ 
	\il f, \nu(t) \ir&= \bar{E}[\theta(\bar{A}(t))f(\bar{X}(t))] \mbox{ for every $f\in \sC_b(\R^d)$}, \quad t\in [0,T], \quad (\bar X(0), \bar A(0)) \sim \eta_0, 
	\end{aligned} \right. 
\end{equation}
where $\bar E$ denotes expectation with respect to $\bar P$. Here $\rho$ is as in Section \ref{sec:intempdis}, and $\bar X$ and  $\bar A$ are $\bar \clf_t$-adapted stochastic processes with sample 
paths in $\clc([0,T]:\R^d)$ and $\clc([0,T]:\R_+)$, respectively, such that
\[
	\bar E \left[ \sup_{0\le t \le T} \theta(\bar A(t)) \right] < \infty. 
\]

A quadruple $(\bar{X}, \bar{A}, \rho, W)$ that solves $\sS_2[\varphi, \nu]$ is a $\mathcal{Z}_2$-valued random variable, where 
\[
	\mathcal{Z}_2 \doteq \mathcal{X} \times \mathcal{Y} \times \mathcal{R}_1 \times \mathcal{W}, \quad \mathcal{Y} \doteq \sC([0,T] : \R_+),
\]
and $\clx, \clw, \clr_1$ are as before.
The distribution of $(\bar{X}, \bar{A}, \rho, W)$ on $\clz_2$ is an element of $\mathcal{P}(\mathcal{Z}_2)$ and is called a weak solution of $\sS_2[\varphi, \nu]$. Let 
\[
	\mathcal{P}_2(\mathcal{Z}_2) \doteq \left\{ \Theta \in \mathcal{P}(\mathcal{Z}_2) : E_\Theta\left[ \int_{\R^m\times [0,T]} \|y\|^2\,\rho(dy\,dt) \right] < \infty, \;  E_\Theta \left[\sup_{0\le t \le T} \theta(\bar A(t))\right] <\infty \right\}.
\]
Note that if $\Theta \in \mathcal{P}_2(\mathcal{Z}_2)$ then $\nu_{\Theta} \in \clc([0,T]: \clm_+(\R^d))$, where
$\nu_{\Theta}$ is defined as
\begin{equation}\label{eq:eq503}	
	\il f, \nu_\Theta(t) \ir \doteq E_\Theta\left[ \theta(\bar{A}(t)) f(\bar{X}(t)) \right] \quad \mbox{for}\quad f\in \mathcal{C}_b(\R^d),\quad t\in [0,T], 
\end{equation} and if such a $\Theta$
 is a weak solution of $\sS_2[\varphi, \nu]$, then, for every $t \in [0,T]$,
 $\nu(t)= \nu_{\Theta}(t)$.
Given $\varphi \in L^2([0,T]:\R^d)$, let 
\[
	\mathcal{E}_2[\varphi] \doteq \{ \Theta \in \mathcal{P}_2(\mathcal{Z}_2) : \text{$\Theta$ is a weak solution to $\sS_2[\varphi, \nu_\Theta]$} \}. 
\]
The candidate rate function is given as
\begin{equation}\label{eq:ratefuni2}
	I_2(\nu) \doteq \inf_{\varphi \in L^2([0,T]:\R^k)}\left\{ \inf_{\Theta\in \mathcal{E}_2[\varphi] : \nu_\Theta=\nu}  E_{\Theta}\left[ \frac{1}{2} \int_{\R^m\times[0,T]} \|y\|^2\,\rho(dy\,dt) \right] + \frac{1}{2\la^2} \int_0^T \|\varphi(t)\|^2\,dt \right\},
\end{equation}
for $\nu \in \sC([0,T] : \mathcal{M}_+(\R^d))$.
The following is the second main result of this work.  As in Section \ref{sec:intempdis} here we only consider the case where $\kappa(n)$ is of order $n^{-1/2}$.
Values of $\kappa(n)$ of higher or lower order than $n^{-1/2}$ will be considered in Section \ref{sec:sizeofnoise}.

Once more, the first part of the theorem below gives a law of large numbers (LLN) and the second part establishes an LDP.
The proof is given in Section \ref{sec:proofthm2}.
\begin{theorem}\label{resulttype2} Suppose that Conditions  \ref{empmzrinit}, \ref{Lip}, \ref{LipA}, \ref{theta}, and \ref{wtdmzrinit}  hold and that $\kappa(n)\to 0$ as $n\to \infty$.
\begin{enumerate}
\item  There is a  $\mu^* \in \sC([0,T] : \mathcal{M}_+(\R^d))$ such that $\mu^n \to \mu^*$ in probability.
Furthermore, $\mu^*$ can be characterized as the map $t \mapsto \nu_{\Theta}(t)$,
 where $\Theta$ is the unique element in $\clp(\clz_2)$ that is a weak solution of $\cls_2[0, \nu_{\Theta}]$ and satisfies $[\Theta]_3 = \delta_{r^o}$.
	
\item Suppose that $\sigma$ and $\gamma$ satisfy Conditions \ref{sigmagamma} and \ref{gammaconst}, and either 
 (i) $\theta$ satisfies Condition \ref{theta}(a), or 
 (ii) $\theta$ satisfies Condition \ref{theta}(b) and $\gamma \equiv 0$. Also suppose that $\sqrt{n}\kappa(n) \to \la \in (0,\infty)$. 
Then $\{\mu^n\}_{n\in \N}$ satisfies a large deviation principle on $\sC([0,T]:\mathcal{M}_+(\R^d))$ with speed $n$ and rate function 
$I_2$.
	\end{enumerate}
\end{theorem}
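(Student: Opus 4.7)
\medskip

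\noindent\textbf{Proof proposal for Theorem \ref{resulttype2}.}
My plan is to follow the two-control weak-convergence/variational strategy outlined in the introduction and already employed for Theorem \ref{resulttype1}, adapting every step to accommodate the weighted empirical measure and the auxiliary processes $A_i^n$. The proof has an LLN half (part 1) and an LDP half (part 2); the latter splits into a Laplace upper and lower bound.

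\emph{Part 1 (LLN).} I would first establish the uniform a~priori bounds
\[
\sup_n \sup_{1\le i \le n} E\Bigl[\sup_{0\le t\le T}\|X_i^n(t)\|^2\Bigr] < \infty,
\qquad
\sup_n \sup_{1\le i \le n} E\Bigl[\sup_{0\le t\le T}\theta(A_i^n(t))\Bigr] < \infty,
\]
using Gronwall arguments and (in case \ref{theta}(a)) Itô's formula applied to $\theta(A_i^n)$, whose drift and martingale parts are controlled via \eqref{eq:thetderbds}; in case \ref{theta}(b), Lipschitzness of $\theta$ plus $\gamma\equiv 0$ gives the same conclusion by direct exponential estimates on $A_i^n$. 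These bounds give tightness of $\{\mu^n(\cdot)\}$ in $\sC([0,T]:\clm_+(\R^d))$. Any subsequential limit is characterized, via a standard martingale problem argument plus the convergence of the empirical functionals $\langle f,\mu^n(t)\rangle = \frac{1}{n}\sum_i \theta(A_i^n(t)) f(X_i^n(t))$, as the weighted marginal $\nu_\Theta(\cdot)$ of some $\Theta$ solving $\cls_2[0,\nu_\Theta]$ with $[\Theta]_3=\delta_{r^o}$. Uniqueness of such $\Theta$ (an auxiliary lemma, proved by a contraction on $\sC([0,T]:\clm_+(\R^d))$ endowed with a weighted BL-type metric) upgrades subsequential convergence to convergence in probability.

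\emph{Part 2, upper bound.} I would invoke the variational representation (Theorem \ref{reptype1}) to write
\[
-\tfrac{1}{n}\log E\bigl[e^{-nF(\mu^n)}\bigr]
= \inf_{(u^n,v^n)} E\Bigl[F(\widetilde\mu^n) + \tfrac{1}{2n}\sum_{i=1}^n\int_0^T\|u_i^n\|^2\,ds + \tfrac{1}{2n\kappa(n)^2}\int_0^T\|v^n\|^2\,ds\Bigr],
\]
where $\widetilde\mu^n$ is the weighted empirical measure of the controlled system (with individual drifts perturbed by $\sigma u_i^n$ and common drifts by $\kappa(n)\alpha v^n$, and analogous $\gamma,\beta$ perturbations on $A_i^n$). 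Since $n\kappa(n)^2\to\lambda^2$, the common-noise cost has the limiting prefactor $1/(2\lambda^2)$. Taking near-optimal controls, truncating as usual to work with bounded controls, I would form the augmented empirical measure
\[
Q^n \doteq \tfrac{1}{n}\sum_{i=1}^n \delta_{(X_i^n,\,A_i^n,\,\varrho_i^n,\,W_i)} \in \clp(\clx\times\cly\times\clr_1\times\clw),
\]
with $\varrho_i^n(dy\,dt)=\delta_{u_i^n(t)}(dy)\,dt$, and establish tightness of $(Q^n,v^n)$ using the uniform $L^2$ bound on $u^n$, the $\theta$-moment bounds from part 1 (now in the controlled system, where I must check the new drift does not destroy them — this is where Conditions \ref{theta}(a) versus \ref{theta}(b) with $\gamma\equiv 0$ enter crucially). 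Passing to a limit $(Q,\varphi)$, I would identify $Q$ as an element of $\cle_2[\varphi]$ by verifying the controlled martingale problem associated to $\cls_2[\varphi,\nu_Q]$; Conditions \ref{sigmagamma} and \ref{gammaconst} are needed so that $\sigma(\nu^n)$ and $\gamma(\nu^n)$ pass to the limit jointly with the empirical-measure convergence $\nu_{Q^n}\to\nu_Q$ in $\clc([0,T]:\clm_+(\R^d))$. Lower-semicontinuity of the $L^2$ cost and continuity of $F$ then yield the Laplace upper bound with rate $I_2$.

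\emph{Part 2, lower bound, and main obstacle.} For the complementary inequality, I would fix $\varphi$ and $\Theta\in\cle_2[\varphi]$ nearly optimizing $F(\nu_\Theta)+I_2(\nu_\Theta)$, and construct iid copies on a product space driven by a common $B$ whose controlled dynamics with control $v^n\equiv\lambda^2\varphi/(n\kappa(n)^2)^{1/2}$ produce a weighted empirical measure converging to $\nu_\Theta$. The essential tool here is a uniqueness result for weak solutions of the controlled nonlinear SDE $\cls_2[\varphi,\nu]$ (an analogue of Lemma \ref{pathunique2} for the weighted setting), which pins down the limit to be exactly $\nu_\Theta$ rather than some other subsequential limit. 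The main obstacle, I expect, is precisely this uniqueness lemma together with the uniform integrability of $\theta(\bar A(t))$ along controlled trajectories: in case (a) one exploits the log-growth of $\theta$ via Itô's formula to absorb the quadratic terms from $\gamma,\beta,v^n$; in case (b), the hypothesis $\gamma\equiv 0$ is used decisively, since without it Remark \ref{rem:remwted}(c) shows the weighted empirical measure can fail to satisfy any LDP. A subsidiary obstacle is the joint continuity argument needed to pass $\theta(\bar A(t))$ through the weak convergence of $Q^n$; here I would combine the a~priori moment bounds with the Skorokhod representation theorem applied on $\clz_2$.
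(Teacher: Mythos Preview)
Your overall strategy matches the paper's: variational representation, augmented empirical measure $Q^n$ on $\clz_2$, tightness, martingale-problem identification of limit points in $\cle_2[\varphi]$ for the upper bound, and a weak-uniqueness lemma for the controlled nonlinear SDE (the paper's Lemma \ref{pathunique2}) to pin down the limit in the lower bound. The handling of the $\theta$-moment bounds under the two alternative hypotheses on $\theta$ is also along the paper's lines.

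Two concrete points deserve correction. First, you misplace the role of Conditions \ref{sigmagamma} and \ref{gammaconst}: the paper explicitly does \emph{not} use them in the upper bound (the martingale-problem identification of limit points goes through with general $\sigma(x,\mu)$, $\gamma(x,\mu)$); they are used only in the weak-uniqueness Lemma \ref{pathunique2} for the lower bound, where one needs $\sigma$ and $\gamma$ to depend on $\mu$ alone so that the Gronwall argument on $g(t)+h(t)$ (resp.\ $g(t)+\tilde h(t)$) closes. Second, your choice of common control in the lower bound is incorrect: with $v^n = \lambda^2\varphi/(n\kappa(n)^2)^{1/2}$ neither the drift nor the cost converge to the right target. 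In the controlled system \eqref{controlledn2} the common-noise perturbation in the drift is $\alpha\,v^n$ (not $\kappa(n)\alpha\,v^n$), and the cost is $\frac{1}{2n\kappa(n)^2}\int_0^T\|v^n\|^2\,dt$; the paper simply takes $v^n \equiv \varphi$, so the drift is already $\alpha\varphi$ and the cost converges to $\frac{1}{2\lambda^2}\int_0^T\|\varphi\|^2\,dt$ since $n\kappa(n)^2\to\lambda^2$.
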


\begin{remark} 
	\label{rem:remwted}
\begin{enumerate}[(a)]
\item Consider the empirical measure of $\{X^n_i(s), A^n_i(s)\}$ on $\RR^d \times \RR_+$, given as
$$\hat \mu^n(s) \doteq \frac{1}{n} \sum_{i=1}^n \delta_{\left(X^n_i(s), A^n_i(s)\right)}.$$
Then the system in equation \eqref{eq:interempwtd} can be written in form of a system as in \eqref{eq:interemp} in which $X^n_i$ is replaced by
the pair $(X^n_i, A^n_i)$. With such a rewriting, one may attempt to deduce Theorem \ref{resulttype2} as a corollary of Theorem \ref{resulttype1}. However, with this reformulation, the conditions needed for Theorem \ref{resulttype1} are too restrictive. In particular, conditions assumed in the statement of Theorem \ref{resulttype2} will, in general, not imply  the conditions of Theorem \ref{resulttype1} (with the new coefficients obtained through the reformulation). Specifically, requiring Conditions \ref{Lip} and \ref{sigmagamma}
for  the reformulated system will say that $\theta$ is bounded and $\gamma\equiv 0$.

\item  A minor modification of the proof of Theorem \ref{resulttype2}  shows in fact that the joint empirical measure $\frac{1}{n} \sum_{i=1}^n \delta_{(X^n_i, A^n_i)}$ satisfies an LDP
on $\clp(\sC([0,T]: \R^d\times \R_+))$. Note that since $\theta$ may be unbounded, the map $\Theta \mapsto \nu_{\Theta}$ is not continuous (in fact in general not even well defined) on all of $\clp(\sC([0,T]: \R^d\times \R_+))$ and so one cannot deduce an LDP for $\mu^n$ from that of the joint empirical measure by a direct application of the contraction principle.  In any case, the amount of work needed to establish the LDP for $\mu^n$ is about the same as that needed for the LDP for the joint empirical measure.

\item In Theorem \ref{resulttype2}, for the case where $\theta$ satisfies Condition \ref{theta}(b), we require that $\gamma\equiv 0$.
The reason for this restrictive requirement on $\gamma$ can be seen as follows.
Consider the simplest example of a $\theta$ satisfying Condition \ref{theta}(b), namely $\theta(x)=x$. Consider also the simplest form of a non-zero $\gamma$ in \eqref{eq:interempwtd}, namely $\gamma(x,\mu) = \gamma \in \RR^m \setminus \{0\}$. Also suppose that $\beta \equiv 0$, $c(x,\mu)\equiv c \in \RR$, and that $a^n_i=1$ for all $i,n$. Then the second set of equations in \eqref{eq:interempwtd} reduces to
\begin{align*}
	dA_i^n(t) = c A_i^n(t)\,dt + \gamma A_i^n(t)\,dW_i(t), \;\;
	A_i^n(0) = 1, \quad1\le i \le n. 
\end{align*}
Namely,
\[
	A_i^n(t) = \exp\left\{ \left(c - \frac{\gamma^2}{2} \right)t + \gamma W_i(t) \right\}.  
\]
In this case, an LDP for $\mu^n(\cdot)$ will in particular say (by the contraction principle) that the sequence $\{\mu^n(1)(\RR^d)\}$ satisfies an LDP. However the latter
is just an LDP for the  empirical mean of iid random variables, $\{A^n_i(1)\}$, namely 
$\frac{1}{n}\sum_{i=1}^n A_i^n(1)$, which is the subject of 
 Cram\'{e}r's theorem.
 However the key condition for this theorem, namely the finiteness of the moment generating function in a neighborhood of the origin, fails to hold in this case. 
\end{enumerate}
\end{remark}

\subsection{Intensity of the common noise}
\label{sec:sizeofnoise}
The LDP in Theorems \ref{resulttype1} and \ref{resulttype2} are established under the condition that
 the common noise intensity $\kappa(n)$ is $O(1/\sqrt{n})$. If this intensity approaches $0$ at a different rate, the form of the rate function is expected to be different.
In this section we discuss such results. We will consider two cases: Case I: $\sqrt{n}\kappa(n)\to 0$, and Case II: $\sqrt{n}\kappa(n)\to \infty$.
Let
$\mathcal{E}_1[\varphi]$ for a given $\varphi \in L^2([0,T]:\R^k)$ be as in Section \ref{sec:intempdis}. In order to define the rate function in the second case, we consider, for a $\varphi$ as above and a  $\nu \in \clc([0,T] : \mathcal{P}(\R^d))$,  the controlled nonlinear SDE $\tilde{\mathcal{S}}_1[\varphi, \nu]$, on some filtered probability space $(\bar\Om, \bar\clf, \bar P, \{\bar\clf_t\})$, equipped with a $m$-dimensional $\bar\clf_t$-Brownian motion $W$:
\begin{equation}\label{newlimitSDE}
	\tilde{\sS}_1[\varphi, \nu] \doteq \left\{
	\begin{aligned}
	d\bar{X}(t) &= b(\bar{X}(t), \nu(t))\,dt + \sigma(\bar{X}(t), \nu(t))\,dW(t) + \alpha(\bar{X}(t), \nu(t))\varphi(t)\,dt, \\ 
	\bar{X}(t) &\sim \nu(t), \quad t\in [0,T],  \quad  \nu(0) = \xi_0.
	\end{aligned} \right. 
\end{equation}
The difference between the above equation and the equation in \eqref{eq: controlled SDE} is the absence of the control term $\rho_t$ on the right side of \eqref{newlimitSDE}.
The distribution, on $\clx\times\clw$, of a pair $(\bar{X}, W)$ that solves \eqref{newlimitSDE} for a given $\varphi$ and $\nu$ will be called a weak solution of $\tilde{\sS}_1[\varphi, \nu]$.

For a  $\varphi \in L^2([0,T]:\R^k)$, let 
\begin{equation}\label{eq:cle1phitil}
	\tilde{\mathcal{E}}_1[\varphi] \doteq \left\{ \Theta \in \clp(\clx\times\clw) :  
	\Theta\;\text{is a weak solution to $\tilde{\sS}_1[\varphi, \nu_{\Theta}]$}\right\}.  
\end{equation}
For $\nu \in \mathcal{P}(\sC([0,T]:\R^d))$, we denote the map $t \mapsto \nu(t)$, once more as $\nu$. 
The following result gives an LDP when $\kappa(n)$ is different from $O(1/\sqrt{n})$. 
Recall that we assume $\kappa(n)\to 0$ as $n\to \infty$. Also recall the space $\clx = \sC([0,T]:\R^d)$.

\begin{theorem} \label{thm:diffspeed} Let $\{\mu^n\}_{n\in\N}$ be as in Section \ref{sec:intempdis}.
	Suppose that Conditions  \ref{empmzrinit} and \ref{Lip} hold.
\begin{enumerate}[(i)]
\item Suppose in addition that Condition \ref{sigmagamma} is satisfied. If $\sqrt{n}\kappa(n) \to 0$ as $n\to \infty$, then $\{\mu^n\}$ satisfies an LDP on $\mathcal{P}(\clx)$ with speed $n$ and
rate function $\tilde I_{1,0}$ given as
\begin{equation}\label{eq:itillar}
	\tilde{I}_{1,0}(\nu) \doteq \inf_{\Theta \in \mathcal{E}_1[0] : [\Theta]_1 = \nu} E_\Theta\left[ \frac{1}{2} \int_{\R^m \times [0,T]} \|y\|^2\,\rho(dy\,dt) \right],\quad \nu \in  \mathcal{P}(\clx).
\end{equation}

\item If $\sqrt{n}\kappa(n) \to \infty$ as $n\to \infty$, then $\{\mu^n\}$ satisfies an LDP on $\mathcal{P}(\clx)$ with speed $\kappa(n)^{-2}$ and
rate function $\tilde I_{1,\infty}$ given as 

\begin{equation}\label{eq:itilsma}
	\tilde{I}_{1,\infty}(\nu) \doteq \inf_{\varphi \in L^2([0,T]:\R^k)}\left\{ \inf_{\Theta \in \tilde{\mathcal{E}}_1[\varphi] : [\Theta]_1 = \nu}\; \frac{1}{2} \int_0^T \|\varphi(t)\|^2\,dt \right\}, \quad \nu \in \mathcal{P}(\mathcal{X}). 
\end{equation}
\end{enumerate}
\end{theorem}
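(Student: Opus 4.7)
Both parts follow the same variational-representation and weak-convergence scheme used to prove Theorem \ref{resulttype1}; only the speed of the Laplace functional and the reparametrization of the common-noise control change. Writing (at an arbitrary speed $a(n)$) the Bou\'e-Dupuis representation
\begin{equation*}
 -\frac{1}{a(n)}\log E[e^{-a(n) F(\mu^n)}] \;=\; \inf E\left[F(\bar\mu^n) + \frac{1}{2a(n)}\sum_{i=1}^n \int_0^T \|u_i^n(s)\|^2\,ds + \frac{1}{2a(n)}\int_0^T \|v^n(s)\|^2\,ds\right],
\end{equation*}
in which $\bar\mu^n = n^{-1}\sum_i\delta_{\bar X_i^n}$ and
\begin{equation*}
 d\bar X_i^n = [b(\bar X_i^n,\bar\mu^n) + \sigma(\bar X_i^n,\bar\mu^n) u_i^n + \kappa(n)\alpha(\bar X_i^n,\bar\mu^n) v^n]\,dt + \sigma(\bar X_i^n,\bar\mu^n)\,dW_i + \kappa(n)\alpha(\bar X_i^n,\bar\mu^n)\,dB,
\end{equation*}
the key is to analyze how $v^n$ and $u_i^n$ scale against $a(n)$. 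In each regime one forms the augmented empirical measures $Q^n$ (as in the proof of Theorem \ref{resulttype1}), proves tightness, identifies limits as weak solutions to a controlled nonlinear SDE, and uses uniqueness of such solutions for the lower bound.

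\textbf{Part (i), $\sqrt{n}\kappa(n)\to 0$.} Take $a(n)=n$ and reparametrize $v^n=\sqrt{n}\,\tilde v^n$, so that the common-noise cost becomes $\tfrac12\int\|\tilde v^n\|^2$ while the common-noise drift $\sqrt{n}\,\kappa(n)\,\alpha(\cdot,\cdot)\tilde v^n$ converges to $0$ in $L^2$ for any sequence of $\tilde v^n$ with uniformly bounded $L^2$-cost (as one may always assume for near-optimal controls, since $F$ is bounded and $\alpha$ is bounded). The upper bound then proceeds verbatim as in the proof of Theorem \ref{resulttype1}, except that in the identification of limit points of $Q^n$ the common-noise drift simply vanishes, so limits satisfy $\cls_1[0,\nu_\Theta]$ and the limiting cost contains only the $\rho$ term. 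For the lower bound, given $\varepsilon$-optimal $\Theta\in\cle_1[0]$ with $[\Theta]_1=\nu$, take $v^n\equiv 0$ and build $u_i^n$ on an infinite product path space using the uniqueness result (Lemma \ref{pathunique}) exactly as for Theorem \ref{resulttype1}; this yields $\tilde I_{1,0}(\nu)$.

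\textbf{Part (ii), $\sqrt{n}\kappa(n)\to\infty$.} Take $a(n)=\kappa(n)^{-2}$. Reparametrize $\varphi^n=\kappa(n) v^n$, so that the common-noise cost is $\tfrac12\int\|\varphi^n\|^2$ and its drift contribution is $\alpha(\cdot,\cdot)\varphi^n$. The individual-noise cost now appears with weight $\kappa(n)^2$, so admissibility plus boundedness of $F$ gives $\sum_i\int\|u_i^n\|^2\,ds = O(\kappa(n)^{-2})$, hence
\begin{equation*}
 \frac{1}{n}\sum_{i=1}^n \int_0^T \|u_i^n(s)\|^2\,ds \;=\; O\!\Bigl(\frac{1}{n\kappa(n)^2}\Bigr) \;\longrightarrow\; 0.
\end{equation*}
Decomposing $\bar X_i^n = X_i^{*,n}+R_i^n$, where $X_i^{*,n}$ satisfies the same equation with $u_i^n$ removed (and still coupled to $\bar\mu^n$), a Gronwall estimate gives $\|R_i^n\|_\infty\le C\|u_i^n\|_{L^2}$, so $d_{BL}(\bar\mu^n(t),\mu^{*,n}(t)) \le C\bigl(\tfrac1n\sum_i\|u_i^n\|_{L^2}^2\bigr)^{1/2}\to 0$. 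The empirical measures $\mu^{*,n}$ are then analyzed by standard propagation of chaos for the equation $\tilde\cls_1[\varphi^n,\nu]$; the vanishing of the $\kappa(n)B$ term and the tightness of $\varphi^n$ in the weak $L^2$ topology produce a deterministic limit of form $[\Theta]_1$ for some $\Theta\in\tilde\cle_1[\varphi]$, giving the upper bound $\tilde I_{1,\infty}$. The lower bound uses $u_i^n\equiv 0$ and $v^n=\varphi/\kappa(n)$, with limit identification via uniqueness of solutions to the Lipschitz McKean-Vlasov equation $\tilde\cls_1[\varphi,\nu_\Theta]$ (which follows from the classical Sznitman-type argument, or by specializing the lemmas already used for Theorem \ref{resulttype1}).

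\textbf{Main obstacle.} The delicate step is Part (ii): unlike in Theorem \ref{resulttype1}, the individual controls $u_i^n$ are not bounded uniformly in $n$ in the $L^2$-cost sense, and only their $\kappa(n)^2$-weighted aggregate is controlled. One must argue that their cumulative effect on $\bar\mu^n$ vanishes by a quantitative perturbation estimate that is uniform in the empirical-measure coupling; this requires care because both $X_i^{*,n}$ and $\bar X_i^n$ are driven by the same (evolving) empirical measure, so the Gronwall step must handle the $d_{BL}$-coupling of coefficients self-consistently. Everything else is a straightforward specialization of the machinery developed for the main theorems.
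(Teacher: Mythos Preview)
Your treatment of Part (i) is essentially the paper's: after an equivalent reparametrization (the paper writes the $v$-cost with weight $(2n\kappa(n)^2)^{-1}$ and concludes $E\int\|v^n\|^2\,dt\to 0$, which is the same as your $\sqrt{n}\kappa(n)\,\tilde v^n\to 0$), the common-noise drift vanishes, limits land in $\cle_1[0]$, and the lower bound uses $v^n\equiv 0$ with Lemma \ref{pathunique}.

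For Part (ii) your route diverges from the paper's and carries avoidable friction. The paper stays entirely inside the $Q^n$-framework of Lemmas \ref{tightness} and \ref{limit1}: from $E[\tfrac1n\sum_i\int\|u_i^n\|^2]\to 0$ and Fatou one obtains $E^*E_Q[\int\|y\|^2\rho(dy\,dt)]=0$, hence $[Q]_2=\delta_{r^o}$ a.s., so any weak limit $Q\in\cle_1[v]$ automatically satisfies $[Q]_{(1,3)}\in\tilde\cle_1[v]$. No decomposition is needed and the ``main obstacle'' you flag never arises. Your route---split off $R_i^n$, then treat $\mu^{*,n}$ by propagation of chaos for $\tilde\cls_1[\varphi^n,\cdot]$---can be made to work but is longer, and as written has a gap: the pathwise bound $\|R_i^n\|_\infty\le C\|u_i^n\|_{L^2}$ fails when $\sigma$ depends on $x$ (which Part (ii) allows, since Condition \ref{sigmagamma} is \emph{not} assumed there), because the equation for $R_i^n$ contains the stochastic integral $\int[\sigma(\bar X_i^n,\bar\mu^n)-\sigma(X_i^{*,n},\bar\mu^n)]\,dW_i$. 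You only get the $L^2$-version $E\sup\|R_i^n\|^2\le C\,E\int\|u_i^n\|^2$, which still yields $d_{BL}(\bar\mu^n,\mu^{*,n})\to 0$ in probability, but then passing from ``$X_i^{*,n}$ coupled to $\bar\mu^n$'' to a closed interacting system for $\mu^{*,n}$ requires a second self-consistent Gronwall loop that you have not written out. The paper's $[Q]_2=\delta_{r^o}$ observation bypasses all of this in one line and is the cleaner device here.
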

The proof of Theorem \ref{thm:diffspeed} is very similar to that of Theorem \ref{resulttype1} and therefore we will only provide a sketch and leave the details to the reader. This sketch is given in Section \ref{sec:proofsketchthm3}.
 \begin{remark}
 	\label{rem:bmcase} Consider the special case discussed in the Introduction (see \eqref{eq:smcoup}) in which the interaction only comes through the common Brownian motion. For this special case
	the results in Theorems \ref{resulttype1} and \ref{thm:diffspeed} (by some minor proof modifications) say the following.  Suppose that the coefficients $b, \sigma$ and $\alpha$ in \eqref{eq:smcoup} are Lipschitz.
	Also, suppose first that $\sqrt{n}\kappa(n)\to \lambda \in (0,\infty)$. Then $\{\mu^n\}$ as introduced in \eqref{eq:empsmdep} satisfies an LDP in $\clp(\clx)$ with speed $n$ and rate function $I$ defined in \eqref{eq:eq1135b}. If $\sqrt{n}\kappa(n)\to 0$, then $\{\mu^n\}$  satisfies an LDP  with speed $n$ and rate function $\tilde I_{1,0}$ as in \eqref{eq:itillar} and where $\cle_1[\cdot]$ is as introduced below 
	\eqref{eq:empsmdep}.  Finally, when $\sqrt{n}\kappa(n)\to \infty$, then $\{\mu^n\}$ satisfies an LDP  with speed $\kappa(n)^{-2}$ and rate function $\tilde I_{1,\infty}$ given simply as
	$$\tilde I_{1,\infty}(\nu) = \inf_{\varphi} \left\{\frac{1}{2} \int_0^T \|\varphi(t)\|^2\,dt\right\},$$
	where the infimum is taken over all $\varphi \in L^2([0,T]:\R^k)$ such that the solution $\{X\}$ of the controlled SDE
	$$X(t) = x + \int_0^t b(X(s))\, ds + \int_0^t \sigma(X(s)) \,dW(s)  
	+ \int_0^t \alpha(X(s)) \varphi(s) \,ds, \quad t \in [0,T],$$
	has probability law $\nu$.
 \end{remark}
One can also give an analogue of Theorem \ref{thm:diffspeed} for Feynman-Kac weighted measures of the form in Section \ref{sec:wtedempmzr}. We state such a result and leave proof details to the reader.

Consider, for a $\varphi \in L^2([0,T]:\R^k)$  and a  $\nu \in \clc([0,T] : \mathcal{M}_+(\R^d))$,  the controlled nonlinear SDE 
$\tilde{\mathcal{S}}_2[\varphi, \nu]$, on some filtered probability space $(\bar\Om, \bar\clf, \bar P, \{\bar\clf_t\})$, equipped with a $m$-dimensional $\bar\clf_t$-Brownian motion $W$:
\begin{equation}\label{controlledSDEtype2tilde}
	\tilde\sS_2[\varphi, \nu] \doteq \left\{
	\begin{aligned}
	d\bar{X}(t) &= b(\bar{X}(t), \nu(t))\,dt +\sigma(\bar{X}(t), \nu(t))\,dW(t)  + \alpha(\bar{X}(t), \nu(t))\varphi(t)\,dt, \\
	d\bar{A}(t) &= \bar{A}(t) c(\bar{X}(t), \nu(t))\,dt 
	   		 +  \bar{A}(t)\gamma^T(\bar{X}(t), \nu(t))\,dW(t) 
			+ \bar{A}(t)\beta^T(\bar{X}(t), \nu(t))\varphi(t)\,dt, \\ 
	\il f, \nu(t) \ir&= \bar{E}[\theta(\bar{A}(t))f(\bar{X}(t))] \mbox{ for every $f\in \sC_b(\R^d)$}, \quad t\in [0,T], \quad (\bar X(0), \bar A(0)) \sim \eta_0, 
	\end{aligned} \right. 
\end{equation}
where $\bar E$ denotes expectation with respect to $\bar P$.
The distribution, on $\clx\times\cly\times\clw$, of  $(\bar{X}, \bar{A}, W)$ that solves \eqref{controlledSDEtype2tilde} for a given $\varphi$ and $\nu$ will be called a weak solution of $\tilde{\sS}_2[\varphi, \nu]$.
For a  $\varphi \in L^2([0,T]:\R^k)$, let 
\begin{equation*}
	\tilde{\mathcal{E}}_2[\varphi] \doteq \left\{ \Theta \in \clp(\clx\times\cly\times\clw) :  
	\Theta\;\text{is a weak solution to $\tilde{\sS}_2[\varphi, \nu_{\Theta}]$}\right\}.  
\end{equation*}
\begin{theorem}
	\label{thm:FKdiffrat}  Let $\{\mu^n\}_{n\in \NN}$ be as in section \ref{sec:wtedempmzr}. Suppose that Conditions  \ref{empmzrinit}, \ref{Lip}, \ref{sigmagamma}, \ref{LipA},  \ref{wtdmzrinit}, and \ref{gammaconst}  hold.
	Also suppose that, either
	 (i) $\theta$ satisfies Condition \ref{theta}(a), or 
	 (ii) $\theta$ satisfies Condition \ref{theta}(b) and $\gamma \equiv 0$.
	 \begin{enumerate}[(i)]
	 \item  If $\sqrt{n}\kappa(n) \to 0$ as $n\to \infty$, then 
	 $\{\mu^n\}$ 
	 satisfies an LDP on $\clc([0,T]: \clm_+(\RR^d))$ with speed $n$ and
	 rate function $\tilde I_{2,0}$ given as
	 \begin{equation}\label{eq:itillar2}
	 	\tilde{I}_{2,0}(\nu) \doteq \inf_{\Theta \in \mathcal{E}_2[0] : \nu_{\Theta} = \nu} E_\Theta\left[ \frac{1}{2} \int_{\R^m \times [0,T]} \|y\|^2\,\rho(dy\,dt) \right],\quad \nu \in  \clc([0,T]: \clm_+(\RR^d)).
	 \end{equation}
	 \item If $\sqrt{n}\kappa(n) \to \infty$ as $n\to \infty$, then $\{\mu^n\}$ satisfies an LDP on $\clc([0,T]: \clm_+(\RR^d))$  with speed $\kappa(n)^{-2}$ and
	 rate function $\tilde I_{2,\infty}$ given as 

	 \begin{equation}\label{eq:itilsma2}
	 	\tilde{I}_{2,\infty}(\nu) \doteq \inf_{\varphi \in L^2([0,T]:\R^k)}\left\{ \inf_{\Theta \in \tilde{\mathcal{E}}_2[\varphi] : \nu_{\Theta} = \nu}\; \frac{1}{2} \int_0^T \|\varphi(t)\|^2\,dt \right\}, \quad 
		\nu \in \clc([0,T]: \clm_+(\RR^d)). 
	 \end{equation}
	 	 \end{enumerate}
	 \end{theorem}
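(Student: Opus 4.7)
The plan is to combine the Feynman--Kac scheme of Theorem~\ref{resulttype2} with the scaling analysis behind Theorem~\ref{thm:diffspeed}. Write $a(n)=n$ in case~(i) and $a(n)=\kappa(n)^{-2}$ in case~(ii). The starting point is the Bou--Dupuis variational representation for $-\frac{1}{a(n)}\log E[\exp(-a(n)F(\mu^n))]$ with $F\in \sC_b(\sC([0,T]:\clm_+(\RR^d)))$, which produces controls $u^n_i$ (one per individual Brownian $W_i$) and $v^n$ (for the common Brownian $B$) with total running cost $\frac{1}{2a(n)}\bigl(\sum_i \int_0^T\|u^n_i\|^2\,dt + \int_0^T\|v^n\|^2\,dt\bigr)$. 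The controlled particle system has added drifts $\sigma u^n_i + \kappa(n)\alpha v^n$ in the $X$-equation and $\bar A\bigl[\gamma^T u^n_i + \kappa(n)\beta^T v^n\bigr]$ in the $A$-equation. One then studies the augmented empirical measures
\[
    Q^n \doteq \frac{1}{n}\sum_{i=1}^n \delta_{(\bar X^n_i,\bar A^n_i,\rho^n_i, W_i)} \in \clp(\clz_2),\qquad \rho^n_i(dy\,dt)\doteq \delta_{u^n_i(t)}(dy)\,dt,
\]
together with $v^n$, along cost-bounded subsequences, exactly as in the proof of Theorem~\ref{resulttype2}.

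The dichotomy in the two rate functions arises from a balance between the sizes of the two noises. In case~(i), finite cost forces $\|v^n\|_{L^2}=O(\sqrt{n})$, so the perturbation $\kappa(n)\alpha v^n$ has size $\sqrt{n}\kappa(n)\to 0$ and becomes invisible in the limit; limit points of $Q^n$ consequently lie in $\mathcal{E}_2[0]$, the $v^n$ cost may be dropped from below, and the upper bound with rate $\tilde I_{2,0}$ follows. In case~(ii) the same reasoning applied to the $u^n_i$ yields $\frac{1}{\kappa(n)^{-2}}\sum_i\int\|u^n_i\|^2=O(1)$, hence $\frac{1}{n}\sum_i\int\|u^n_i\|^2=O(1/(n\kappa(n)^2))\to 0$, forcing $[Q^n]_3\to \delta_{r^o}$ and eliminating both the $\rho$-drift in the $X$-equation and (via Condition~\ref{gammaconst}) the $\gamma^T u^n_i$-drift in the $A$-equation. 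Meanwhile $\kappa(n) v^n$ stays $O(1)$ and, up to subsequence, converges weakly in $L^2$ to some $\varphi$ with $\frac{1}{2}\int_0^T\|\varphi\|^2\,dt \le \liminf \frac{1}{2\kappa(n)^{-2}}\int_0^T\|v^n\|^2\,dt$, so the limit measure solves $\tilde\sS_2[\varphi,\nu_\Theta]$ and yields the upper bound with $\tilde I_{2,\infty}$. In both cases the characterization of the limit as a weak solution of the appropriate controlled nonlinear system proceeds via the controlled martingale problem argument of Theorem~\ref{resulttype2}, with Conditions~\ref{sigmagamma} and~\ref{gammaconst} making the limiting diffusion coefficients deterministic, and with the two alternatives on $\theta$ handled as before: Condition~\ref{theta}(a) via It\^o's formula applied to $\theta(\bar A^n_i)$ combined with \eqref{eq:thetderbds}, and Condition~\ref{theta}(b) with $\gamma\equiv 0$ via the Lipschitz property of $\theta$ together with the fact that in this regime $\bar A^n_i$ has no $u^n_i$-drift.

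For the Laplace lower bound, given an almost-optimal pair $(\varphi,\Theta)$ for $\tilde I_{2,0}(\nu)$ or $\tilde I_{2,\infty}(\nu)$, one constructs $u^n_i$ by sampling i.i.d.\ from the disintegration of the optimal $\rho$ under $\Theta$ and sets $v^n\equiv 0$ in case~(i) or chooses $v^n$ with $\kappa(n)v^n\to \varphi$ in case~(ii); a law of large numbers for the controlled system then shows the resulting augmented empirical measures converge to $\Theta$, provided the target controlled nonlinear SDE admits at most one weak solution. This uniqueness---the analogue of Lemmas~\ref{pathunique} and~\ref{pathunique2} applied respectively to $\sS_2[0,\cdot]$ and $\tilde\sS_2[\varphi,\cdot]$---is the principal obstacle. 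It rests on a Gronwall estimate on the difference of two candidate solutions measured in the $d_{BL}$ metric on $\nu$, coupled with $L^2$ bounds on $\bar A$; the delicate point is that under Condition~\ref{theta}(b) the map $a\mapsto\theta(a)$, though Lipschitz, has unbounded range, so the estimate must be carried in a norm that weights the $\bar A$-component appropriately, using the assumption $\gamma\equiv 0$ to close the loop. Once uniqueness is in hand, passing to the limit in the cost along the constructed controls gives the matching lower bound.
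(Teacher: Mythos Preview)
Your proposal is correct and follows essentially the route the paper intends: the paper explicitly leaves the proof of Theorem~\ref{thm:FKdiffrat} to the reader, indicating that it is obtained by superimposing the scaling analysis of Section~\ref{sec:proofsketchthm3} (Theorem~\ref{thm:diffspeed}) onto the Feynman--Kac machinery of Section~\ref{sec:proofthm2} (Theorem~\ref{resulttype2}), which is exactly what you do. The only cosmetic difference is that the paper absorbs the factor $\kappa(n)$ into the common-noise control (so the controlled drift is $\alpha v$ with cost $\frac{1}{2n\kappa(n)^2}\int\|v\|^2$ at speed $n$, or $\frac{1}{2}\int\|v\|^2$ at speed $\kappa(n)^{-2}$), whereas you keep the drift as $\kappa(n)\alpha v^n$; the two parametrizations are equivalent and lead to the same vanishing/convergence statements you describe.
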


\section{Proof of Theorem \ref{resulttype1}. }
\label{sec:proofthm1}
Part 1 follows by a standard argument (cf. \cite{szn91}), however for completeness we give a sketch in the Appendix.
We now consider part 2.

From the well known equivalence between an LDP and a Laplace principle (cf. \cite{dupell4}) it suffices to show that the function $I_1$ introduced in \eqref{eq:eq1135}
is a rate function and for every $F \in \clc_b(\clp(\clx))$ the following upper and lower bounds are satisfied.\\

\noindent {\bf Laplace Upper Bound}
\begin{equation}\label{eq:eq1137u}
	\liminf_{n\to\infty} -\frac{1}{n} \log E\left[e^{-n F(\mu^n)}\right] \ge \inf_{\nu \in \mathcal{P}(\mathcal{X})} \left[ F(\nu) + {I}_1(\nu) \right].
\end{equation}
\noindent {\bf Laplace Lower Bound}
\begin{equation}\label{eq:eq1137l}
	\limsup_{n\to\infty} -\frac{1}{n} \log E\left[e^{-n F(\mu^n)}\right] \le \inf_{\nu \in \mathcal{P}(\mathcal{X})} \left[ F(\nu) + {I}_1(\nu) \right].
\end{equation}

The upper bound is shown in Section \ref{sec:lapupp2.1} and the lower bound is treated in Section \ref{sec:laplow2.1}. 
The upper bound proof does not require Condition \ref{sigmagamma} and we present an argument assuming only Conditions \ref{empmzrinit} and \ref{Lip}.
The proof of the statement that $I_1$
is a rate function is very similar to that of the upper bound and thus we only give a brief sketch which appears in Section \ref{sec:ratefun2.1}.
Proofs rely on a certain stochastic control representation for the Laplace functional on the left side of \eqref{eq:eq1137u} and \eqref{eq:eq1137l} which we now present.

Given some filtered probability space 
 $(\bar\Om, \bar\clf, \bar P, \{\bar\clf_t\})$ that supports iid $m$-dimensional Brownian motions  $\{W_i\}_{i=1}^{\infty}$ and   a $k$-dimensional Brownian motion $B$ that is independent of the collection $\{W_i\}_{i=1}^{\infty}$ and such that for every $s$, $\{W_i(t)-W_i(s), B(t)-B(s), i \ge 1, t \ge s\}$ is independent of $\bar\clf_s$, 
denote by $\cla^{1,n}$ the class of $\bar \clf_t$-progressively measurable processes $u: [0, T]\times \Omega \to \R^{n m}$ such that
\[
	\bar{E}\left[\int_0^T \|u(s)\|^2 \,ds\right] < \infty. 
\] 
For $u \in \mathcal{A}^{1,n}$, we will write $u = (u_1, \ldots, u_n)$, where $u_i$ is the $i$th component of $u$ and is $m$-dimensional. For $M \in (0, \infty)$, let 
\[
	S_M \doteq \left\{ v \in L^2([0, T] : \R^k) : \int_0^T \|v(s)\|^2\,ds \le M \right\}.
\]
This space will be equipped with the weak topology under which it is a compact  space. Note that 
\[
	\bigcup_{M\in \N} S_M = L^2([0, T] : \R^k). 
\]
Also let 
\[
	\mathcal{A}^2_M \doteq \left\{ \text{Progressively measurable $\R^k$-valued processes $v$ such that}\; v \in S_M\;\text{$\bar{P}$-a.s.} \right\}, 
\]
and 
\[
	\mathcal{A}^2 \doteq \left\{ \text{Progressively measurable $\R^k$-valued processes $v$ such that}\; \bar{E}\left[\int_0^T \|v(s)\|^2 \,ds\right] < \infty\right\}.
\]

For  $(u, v) \in \mathcal{A}^{1,n}\times \mathcal{A}^2$,  consider the  controlled analogue of the system in \eqref{eq:interemp}, driven by controls  $(u, v)$:
\begin{equation}\label{controlledn}
\begin{aligned}
	d\bar{X}^n_i(t) &= b(\bar{X}^n_i(t), \bar{\mu}^n(t))\,dt +  \sigma(\bar{X}^n_i(t), \bar{\mu}^n(t))u_i(t)\,dt + \alpha(\bar{X}^n_i(t), \bar{\mu}^n(t))v(t)\,dt \\ 
			       &\quad  +\sigma(\bar{X}^n_i(t),\bar{\mu}^n(t))\,dW_i(t) + \kappa(n) \alpha(\bar{X}^n_i(t), \bar{\mu}^n(t))\,dB(t), \\ 
	\bar{X}^n_i(0) &= x^n_i, \quad 1\le i \le n, 
\end{aligned}
\end{equation}
where $\bar{\mu}^n(t) = \frac{1}{n}\sum_{i=1}^n \delta_{\bar{X}_i^n(t)}$. Using the Lipschitz and boundedness conditions on the coefficients it is easy to check that the above system of equations has a unique solution.
We also consider the empirical measure $\bar{\mu}^n = \frac{1}{n}\sum_{i=1}^n \delta_{\bar{X}_i^n}$ 
which is a $\clp(\clx)$-valued random variable.
A form of the following representation was first shown in \cite{boudup}. The representation given below, that allows for an arbitrary filtered probability space on the right side was given in 
\cite{buddup3} (see also \cite{buddupfis}). All expectations will be denoted by $E$ unless specified otherwise. 

\begin{theorem}\label{reptype1} For any $F \in \sC_b(\mathcal{P}(\mathcal{X}))$ and for each $n\in \N$, 
\begin{equation}\label{eq:seq}
	-\frac{1}{n}\log E\left[e^{-n F(\mu^n)}\right] =\inf_{(u, v) \in \mathcal{A}^{1,n}\times \mathcal{A}^2} E\left[ \frac{1}{2n} \sum_{i=1}^n \int_0^T \|u_i(t)\|^2\,dt + \frac{1}{2n\kappa(n)^2}\int_0^T \|v(t)\|^2\,dt + F(\bar{\mu}^n) \right].
\end{equation}
Furthermore, for every $\delta > 0$, there is an $M < \infty$ such that for each $n\in \N$, 
\begin{equation}\label{eq:seqb}
	-\frac{1}{n}\log E\left[e^{-n F(\mu^n)}\right] \ge\inf_{(u, v) \in \mathcal{A}^{1,n}\times \mathcal{A}_M^2} E\left[ \frac{1}{2n} \sum_{i=1}^n \int_0^T \|u_i(t)\|^2\,dt + \frac{1}{2n\kappa(n)^2}\int_0^T \|v(t)\|^2\,dt + F(\bar{\mu}^n) \right] - \delta. 
\end{equation}
\end{theorem}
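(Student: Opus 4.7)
The plan is to derive \eqref{eq:seq} as an application of the Bou\'e--Dupuis variational formula for exponential functionals of finite-dimensional Brownian motion \cite{boudup}, in the arbitrary filtered probability space generality of \cite{buddup3}. Under Condition \ref{Lip}, the system \eqref{eq:interemp} has a unique strong solution, so $nF(\mu^n)$ is a bounded measurable functional of the $(nm+k)$-dimensional Brownian motion $\mathbf{B} \doteq (W_1,\ldots,W_n,B)$ and the formula applies directly. It yields
\[
	-\log E[e^{-nF(\mu^n)}] = \inf_{(u_1,\ldots,u_n,\tilde v)} E\left[\frac{1}{2}\sum_{i=1}^n \int_0^T \|u_i(t)\|^2\,dt + \frac{1}{2}\int_0^T \|\tilde v(t)\|^2\,dt + nF(\tilde\mu^n)\right],
\]
the infimum running over progressively measurable controls, with $\tilde\mu^n$ the empirical measure of the processes obtained by shifting $W_i$ by $\int_0^\cdot u_i\,ds$ and $B$ by $\int_0^\cdot \tilde v\,ds$ in \eqref{eq:interemp}.

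A direct substitution then shows that the resulting controlled processes match those of \eqref{controlledn} modulo the fact that the common-noise drift appears as $\kappa(n)\alpha\tilde v$ rather than $\alpha v$. Introducing the change of variable $v \doteq \kappa(n)\tilde v$ aligns the two systems and simultaneously transforms the cost $\frac{1}{2}\int_0^T\|\tilde v\|^2\,dt$ into $\frac{1}{2\kappa(n)^2}\int_0^T\|v\|^2\,dt$. Dividing through by $n$ and relabeling gives \eqref{eq:seq}.

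For the refined bound \eqref{eq:seqb}, boundedness of $F$ gives $|-\frac{1}{n}\log E[e^{-nF(\mu^n)}]| \le \|F\|_\infty$, so any $(\delta/2)$-optimal pair $(u,v)$ in \eqref{eq:seq} satisfies
\[
	E\left[\frac{1}{2n\kappa(n)^2}\int_0^T\|v(t)\|^2\,dt\right] \le 2\|F\|_\infty + \delta/2.
\]
Define the truncated control $v' \doteq v\,\mathbf{1}_{\{\int_0^T\|v\|^2\,dt \le M\}}$, which lies in $\cla_M^2$ and has control cost no larger than that of $v$. Since the corresponding controlled processes agree on $\{\int_0^T\|v\|^2\,dt \le M\}$, the change in $E[F(\bar\mu^n)]$ is bounded by $2\|F\|_\infty\,P(\int_0^T\|v\|^2\,dt > M)$, and Markov's inequality controls the latter by $O(n\kappa(n)^2/M)$. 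Choosing $M$ sufficiently large makes this at most $\delta/2$ and yields \eqref{eq:seqb}.

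The only substantive ingredient is the Bou\'e--Dupuis representation itself; the rest is a change of variable and a routine truncation. The one point requiring mild care is that the threshold $M$ in \eqref{eq:seqb} must be uniform in $n$, which amounts to having $n\kappa(n)^2$ bounded in the regime of interest (immediate under the scaling $\sqrt{n}\kappa(n)\to\lambda$ of Theorem \ref{resulttype1}(2), and adapting transparently to the other regimes of Theorem \ref{thm:diffspeed} by running the same estimate with the appropriate speed in place of $n$).
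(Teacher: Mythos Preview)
Your approach is exactly the one the paper takes: it does not give a proof but simply cites the Bou\'e--Dupuis representation \cite{boudup} and its filtered-space extension \cite{buddup3,buddupfis}, so your derivation of \eqref{eq:seq} via the change of variable $v=\kappa(n)\tilde v$ is the intended argument, just written out in more detail.

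There is one small technical gap in your treatment of \eqref{eq:seqb}. The truncated control $v' = v\,\mathbf{1}_{\{\int_0^T\|v\|^2\,dt\le M\}}$ is \emph{not} progressively measurable, since the indicator depends on the entire path of $v$ over $[0,T]$ and hence is only $\bar\clf_T$-measurable; such a $v'$ does not lie in $\cla_M^2$. The standard fix is to truncate at the stopping time $\tau_M \doteq \inf\{t:\int_0^t\|v(s)\|^2\,ds\ge M\}$ and set $v'(t)=v(t)\mathbf{1}_{[0,\tau_M)}(t)$. This $v'$ is progressively measurable with $\int_0^T\|v'\|^2\,dt\le M$ a.s., the controlled processes driven by $v$ and $v'$ coincide on $[0,\tau_M]$ and hence on all of $[0,T]$ on the event $\{\tau_M\ge T\}$, and your Markov-inequality bound $P(\tau_M<T)=P(\int_0^T\|v\|^2\,dt> M)\le Cn\kappa(n)^2/M$ goes through unchanged. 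With this correction your argument is complete, and your observation about the uniformity of $M$ requiring $\sup_n n\kappa(n)^2<\infty$ is accurate and is precisely why the paper switches to the $\kappa(n)^{-2}$-scaled representation \eqref{eq:seqbrsm}--\eqref{eq:seqcn} in the regime $\sqrt{n}\kappa(n)\to\infty$.
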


\noindent We now use the above result to complete the proof of \eqref{eq:eq1137u} and \eqref{eq:eq1137l}.

\subsection{Laplace Upper Bound}
\label{sec:lapupp2.1}
Throughout this  section we assume that  Conditions \ref{empmzrinit} and \ref{Lip} are satisfied.
As noted previously, the upper bound proof does not require Condition \ref{sigmagamma} and so this condition will not be used in this section.

Fix $F \in \sC_b(\mathcal{P}(\mathcal{X}))$ and  $\delta \in (0,1)$. From Theorem \ref{reptype1} there is an $M < \infty$ such that for each $n\in \N$, one can find $(u^n, v^n) \in \mathcal{A}^{1,n} \times \mathcal{A}^2_M$ such that 
\begin{equation}\label{eq:seqc}
	-\frac{1}{n}\log E\left[e^{-n F(\mu^n)}\right]  \ge E\left[ \frac{1}{2n} \sum_{i=1}^n \int_0^T \|u_i^n(t)\|^2\,dt + \frac{1}{2n\kappa(n)^2}\int_0^T \|v^n(t)\|^2\,dt + F(\bar{\mu}^n) \right] - \delta,
\end{equation}
where $\bar{\mu}^n = \frac{1}{n}\sum_{i=1}^n \delta_{\bar{X}_i^n}$ and $\bar{X}_i^n$ are given by \eqref{controlledn} (repalcing 
$(u,v)$ with $(u^n, v^n)$). We will next show that 
\begin{equation}\label{eq:varlb}
\begin{aligned}
	&\liminf_{n\to\infty} E\left[ \frac{1}{2n}\sum_{i=1}^n \int_0^T \|u_i^n(t)\|^2\,dt + \frac{1}{2n\kappa(n)^2}\int_0^T \|v^n(t)\|^2\,dt + F(\bar{\mu}^n) \right] \\ 
	&\ge  \inf_{\varphi \in L^2([0,T]:\R^k)} \inf_{\Theta\in \mathcal{E}_1[\varphi]} \left( E_\Theta\left[ \frac{1}{2} \int_{\R^m\times[0,T]} \|y\|^2\,\rho(dy\,dt) \right] + \frac{1}{2\la^2} \int_0^T \|\varphi(t)\|^2\,dt + F([\Theta]_1) \right). 
\end{aligned}
\end{equation}
Since $\delta\in (0,1)$ is arbitrary, the inequality in \eqref{eq:eq1137u} is immediate from \eqref{eq:varlb} on using the definition of $I_1$ in \eqref{eq:eq1135}.

We now prove \eqref{eq:varlb}. From \eqref{eq:seqc} it follows that
\begin{equation}
	\label{eq:eq128}
	\sup_{n \in \NN} E\left[ \frac{1}{2n}\sum_{i=1}^n \int_0^T \|u_i^n(t)\|^2\,dt + \frac{1}{2n\kappa(n)^2}\int_0^T \|v^n(t)\|^2\,dt\right] \le 2\|F\|_{\infty} +1.
\end{equation}
The following lemma shows that under such a uniform boundedness property, one has the tightness of certain key occupation measures.

\begin{lemma}\label{tightness} 
	Suppose for some $M \in (0,\infty)$, $\{(u^n, v^n)\}_{n\in \N}$ is a sequence with $(u^n, v^n) \in \sA^{1,n}\times \sA^2_M$ for each $n$, and suppose $\{u^n\}_{n\in \N}$ satisfies, for some $L\in (0,\infty)$,
	\begin{equation}\label{eq:eq135}
		\sup_{n \in \NN} E\left[ \frac{1}{n} \sum_{i=1}^n \int_0^T \|u_i^n(t)\|^2\,dt \right] \le L.
	\end{equation}
	 Define  $\mathcal{P}(\mathcal{Z}_1)$-valued random variables
	\begin{equation}\label{eq:bigempmzr}
		Q^n(A\times R\times C) \doteq \frac{1}{n}\sum_{i=1}^n \delta_{\bar{X}_i^n}(A)\delta_{\rho_i^n}(R)\delta_{W_i}(C),\; A\times R\times C \in \sB(\mathcal{Z}_1),
	\end{equation}
	 where $\bar{X}^n_i$ is defined as in \eqref{controlledn} (repalcing $(u,v)$ with $(u^n, v^n)$), and
	\begin{equation}\label{eq:eq1027}
		\rho_i^n(E\times B) \doteq \int_B\delta_{u_i^n(t)}(E)\,dt,\; E\in \sB(\R^m), B\in \sB([0,T]).
	\end{equation}
Then $\{(Q^n, v^n)\}_{n\in \N}$ is tight as a sequence of $\mathcal{P}(\mathcal{Z}_1) \times S_M$-valued random variables.
\end{lemma}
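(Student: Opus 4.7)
The plan is to reduce joint tightness of $(Q^n, v^n)$ to tightness of $\{Q^n\}$ in $\mathcal{P}(\mathcal{Z}_1)$, since $S_M$ is compact in the weak $L^2$ topology and so $\{v^n\}$ is automatically tight. For the random probability measures $Q^n$, I would invoke the standard criterion that tightness of $\{Q^n\}$ in $\mathcal{P}(\mathcal{P}(\mathcal{Z}_1))$ is equivalent to tightness of the mean measures $\bar{Q}^n \doteq E[Q^n]$ in $\mathcal{P}(\mathcal{Z}_1)$. Because $\mathcal{Z}_1=\mathcal{X}\times \mathcal{R}_1\times \mathcal{W}$, it then suffices to prove tightness of each marginal $[\bar{Q}^n]_j$ separately and take a product of compact sets.

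The $\mathcal{W}$-marginal of $\bar{Q}^n$ is the single law of an $m$-dimensional standard Brownian motion and is therefore trivially tight. For the $\mathcal{R}_1$-marginal, the set $K_R^C \doteq \{r\in \mathcal{R}_1 : \int_{\R^m\times[0,T]}\|y\|^2\,r(dy\,dt)\le C\}$ is compact in the Wasserstein-$1$ topology (weak tightness and uniform integrability of first moments both follow from the uniform second-moment bound via Markov and Cauchy--Schwarz), and \eqref{eq:eq135} combined with Markov yields
\[
E\bigl[[Q^n]_2\bigl((K_R^C)^c\bigr)\bigr]=\frac{1}{n}\sum_{i=1}^n P\Bigl(\int_0^T\|u_i^n(t)\|^2\,dt>C\Bigr)\le \frac{L}{C},
\]
uniformly in $n$. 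For the $\mathcal{X}$-marginal, I would apply a standard modulus-of-continuity (Aldous-type) criterion after obtaining averaged moment estimates on the controlled SDE \eqref{controlledn}. Using boundedness of $\sigma,\alpha$, the $L^2$ bound \eqref{eq:eq135} on $u_i^n$, the deterministic $L^2$ bound $\int_0^T\|v^n\|^2\,dt\le M$, and $\kappa(n)\to 0$, a Gronwall estimate applied after averaging over $i$ gives
\[
\sup_{n}\frac{1}{n}\sum_{i=1}^n E\bigl[\sup_{0\le t\le T}\|\bar{X}_i^n(t)\|^2\bigr]<\infty,
\]
with the initial-value contribution controlled by $n^{-1}\sum_i\|x_i^n\|^2$ from Condition \ref{empmzrinit}. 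Combining Cauchy--Schwarz on the drift and $u,v$ integrals with Burkholder--Davis--Gundy on the stochastic integrals then yields
\[
\lim_{\delta\downarrow 0}\sup_{n}\frac{1}{n}\sum_{i=1}^n P\Bigl(\sup_{|t-s|<\delta}\|\bar{X}_i^n(t)-\bar{X}_i^n(s)\|>\eta\Bigr)=0\quad\text{for every }\eta>0,
\]
which together with the $R^{-2}$ tail bound on initial values gives tightness of $[\bar{Q}^n]_1$ on $\mathcal{X}$.

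The main technical obstacle is that the individual $\|x_i^n\|$ and individual $L^2$-norms $\|u_i^n\|_{L^2}$ are not controlled uniformly in $i$; they are only controlled after averaging, via Condition \ref{empmzrinit} and \eqref{eq:eq135}. The key point enabling the argument is that every required tightness estimate is a statement about the mean measure $\bar{Q}^n$, so one works throughout with $n^{-1}\sum_{i=1}^n$ of the relevant quantities rather than a supremum over $i$. The dependence of the coefficients on $\bar{\mu}^n(t)$ is not an obstacle, because Condition \ref{Lip} together with the uniform bound $d_{BL}(\mu,\nu)\le 2$ implies that $\|b(x,\mu)\|\le C(1+\|x\|)$ uniformly in $\mu$, which is exactly what is needed to close the Gronwall step.
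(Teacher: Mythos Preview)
Your proposal is correct and follows essentially the same route as the paper: reduce to tightness of each marginal of the mean measure $E[Q^n]$, handle $\mathcal{W}$ trivially, use the second-moment sublevel sets of $r\mapsto\int\|y\|^2\,r(dy\,dt)$ for $\mathcal{R}_1$, and use a sup-norm bound plus an oscillation estimate for the $\mathcal{X}$-marginal. The only cosmetic difference is that the paper phrases the $\mathcal{X}$-tightness via the Aldous--Kurtz stopping-time criterion rather than a direct modulus-of-continuity bound, and it derives the particle-wise estimate $E[\sup_s\|\bar X_i^n(s)\|^2]\le c(1+\|x_i^n\|^2+E\int_0^T\|u_i^n\|^2\,ds)$ first and then averages, which is exactly your observation that $\|b(x,\mu)\|\le C(1+\|x\|)$ uniformly in $\mu$ makes possible.
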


\begin{proof} Since $S_M$ is compact, tightness of $\{v^n\}$ is immediate. The third marginals of $Q^n$ are clearly tight since $W_i$ are iid. The first marginal of $Q^n$, namely  $[Q^n]_1$, equals $\bar{\mu}^n$. For each $n$ let $\gamma^n = E[\bar{\mu}^n]$. For tightness of $\{\bar{\mu}^n\}_{n\in \N}$, it suffices to prove that the family $\{\gamma^n\}_{n\in \N}$ of measures on $\mathcal{X}$ is relatively compact.
	
By using the growth properties on the coefficients it follows that, for some $c_1\in (0,\infty)$ and all $n\in \NN$,
\begin{equation}\label{eq:eq1048}
	E\left[ \sup_{0\le s\le T} \|\bar{X}_i^n(s)\|^2 \right] \le c_1\left( 1 + \|x_i^n\|^2 + E\left[ \int_0^T \|u_i^n(s)\|^2\,ds\right] \right). 
\end{equation}
Thus, 
\begin{align}\label{eq:finiteness} \nonumber
	\sup_{n\in \N} \int_{\mathcal{X}} \sup_{0\le t\le T} \|\psi(t)\|^2 \,d\gamma^n(\psi) 
				&= \sup_{n\in \N} \frac{1}{n} \sum_{i=1}^n E\left[ \sup_{0\le t\le T} \|\bar{X}_i^n(t)\|^2 \right]   \nonumber \\
				&\le c_1 \sup_{n\in \N} \left( 1 + \frac{1}{n}\sum_{i=1}^n\|x_i^n\|^2 + E\left[\frac{1}{n} \sum_{i=1}^n \int_0^T \|u_i^n(s)\|^2\,ds\right]  \right) 
				< \infty,
\end{align}
where the last inequality is from \eqref{eq:eq135} and Condition \ref{empmzrinit}. 

Next note that for any $\eps \in (0,1)$ and $t \in [0, T-\eps]$, 
\begin{align*}
	\|\bar{X}_i^n(t+\eps) - \bar{X}_i^n(t)\|^2 &\le c_2\left( \left\| \int_t^{t +\eps}b(\bar{X}_i^n(s), \bar{\mu}^n(s))\,ds \right\|^2 +\left\| \int_t^{t+\eps}\sigma(\bar{X}_i^n(s), \bar{\mu}^n(s))u_i^n(s)\,ds\right\|^2  \right. \\
	&\quad + \left\|\int_t^{t+\eps} \alpha(\bar{X}_i^n(s), \bar{\mu}^n(s))v^n(s)\,ds\right\|^2 +\left\| \int_t^{t+\eps} \sigma(\bar{X}_i^n(s), \bar{\mu}^n(s))\,dW_i(s) \right\|^2 \\
	&\quad \left. + \kappa(n)^2 \left\|\int_t^{t+\eps} \alpha(\bar{X}_i^n(s), \bar{\mu}^n(s))\,dB(s) \right\|^2 \right).
\end{align*}
Thus for any stopping time $\tau$ taking values in $[0, T-\eps]$, using the Cauchy-Schwarz inequality, the linear growth of $b$, and the boundedness of $\alpha$ and $\sigma$, 
\[
	E \left[ \left\| \bar{X}_i^n(\tau+\eps) - \bar{X}_i^n(\tau) \right\|^2 \right] \le c_3 \eps \left( 1 + E\left[ \sup_{0\le s\le T} \|\bar{X}_i^n(s)\|^2\right] + E\left[ \int_0^T \|u_i^n(s)\|^2\,ds\right] \right), 
\]
where the constant $c_3$  does not depend on $n$, $\eps$, or the stopping time $\tau$. 
Denoting by $\mathcal{T}_\eps$  the collection of all  stopping times $\tau$,  with respect to the canonical filtration generated by the coordinate process on $\clx$, taking values in $[0, T-\eps]$, we now have
\begin{align*}
	&\sup_{\tau \in \mathcal{T}_\eps} \int_\mathcal{X} \|\varphi(\tau+\eps) - \varphi(\tau)\|^2\,d\gamma^n(\varphi) \\
	&\quad \le c_3\eps \left( 1 + \frac{1}{n}\sum_{i=1}^nE\left[ \sup_{0\le s\le T} \|\bar{X}_i^n(s)\|^2\right] + E\left[ \frac{1}{n}\sum_{i=1}^n \int_0^T \|u_i^n(s)\|^2\,ds\right] \right)\\
	&\quad \le c_3\eps \left( 1 + \int_{\mathcal{X}} \sup_{0\le t\le T} \|\psi(t)\|^2 \,d\gamma^n(\psi)  + L \right).
\end{align*}
Using \eqref{eq:finiteness} in the above display,
$$\limsup_{\eps \to 0}\sup_{n\in \NN}\sup_{\tau \in \mathcal{T}_\eps} \int_\mathcal{X} \|\varphi(\tau+\eps) - \varphi(\tau)\|^2\,d\gamma^n(\varphi) = 0.$$
Thus from the Aldous-Kurtz tightness criterion, we have that the collection $\gamma^n$ is relatively compact which, as noted previously, gives the tightness of the collection
$\{\bar \mu^n\} = \{ [Q^n]_1\}$.

Finally we consider the second marginals of $Q^n$. Define 
\[
	g(r) \doteq \int_{\R^m\times [0,T]} \|y\|^2\, r(dy\,dt), \; r\in \mathcal{R}_1.
\]
We note that $g$ has compact level sets. Indeed, for $c \in \R_+$, let $L_c = \{r\in \mathcal{R}_1 : g(r) \le c\}$ denote the corresponding level set. By Chebyshev's inequality, 
\[
	\sup_{r \in L_c} r \left( \left\{ y\in \R^d : \|y\| > M \right\} \times [0,T] \right) \le \sup_{r\in L_c} \frac{g(r)}{M^2} 
							\le \frac{c}{M^2}  
							\to 0
\]
as $M \to \infty$. This shows that $L_c$ is relatively compact in $\clr$. Let $\{r_n\} \subset L_c$ be a sequence that converges
in $\clr$ to some $r^*$. By Fatou's lemma, $g(r^*) \le c$, and so $r^* \in L_c$. 
Also, by the uniform integrability that follows from
\[
	\sup_{n\ge 1} \int_{\R^m\times [0,T]} \|y\|^2\,r_n(dy\,dt) = \sup_{n\ge 1} g(r_n) \le c, 
\]
the moments of $r_n$ also converge to the moments of $r^*$. Thus  $r_n \to r^*$ in $\clr_1$, establishing compactness of $L_c$ in $\clr_1$.
Let $G: \mathcal{P}(\mathcal{R}_1) \to [0,\infty]$ be given as
\[
	G(\theta) \doteq \int_{\mathcal{R}_1} g(r)\,\theta(dr). 
\]
Then $G$ is a tightness function on $\mathcal{P}(\mathcal{R}_1)$ (namely it has relatively compact level sets), and thus to establish the tightness of the second marginals $\{[Q^n]_2\}$, it suffices to show that 
\begin{equation}\label{tightfunc}
	\sup_{n\ge 1} E[G([Q^n]_2)] < \infty. 
\end{equation}
For each $n \in \N$, 
\begin{align*}
	E[G([Q^n]_2)] &= E\left[ \int_{\mathcal{R}_1} g(r)\,[Q^n]_2(dr) \right] = E\left[ \frac{1}{n} \sum_{i=1}^n \int_{\R^m\times [0,T]} \|y\|^2\, \rho_i^n(dy\,dt) \right] \\
			      &= E\left[ \frac{1}{n} \sum_{i=1}^n \int_0^T \|u_i^n(t)\|^2\,dt \right] 
			      \le L.
\end{align*}
This proves \eqref{tightfunc} and completes the proof of the tightness of $\{[Q^n]_2\}$.
The result follows.
\end{proof}

The next lemma characterizes the weak limit points of the  sequence $(Q^n, v^n)$. Recall the collection $\cle_1[\varphi]$ from \eqref{eq:cle1phi}.

\begin{lemma}\label{limit1}  Suppose, for some $M \in (0,\infty)$, $\{(u^n, v^n)\}_{n\in \N}$ is a sequence with $(u^n, v^n) \in \sA^{1,n}\times \sA^2_M$ for each $n$, and such that $\{u^n\}_{n\in \N}$ satisfies \eqref{eq:eq135} with some $L\in (0,\infty)$.
Let $Q^n$ be defined as in Lemma \ref{tightness}.
If $(Q^n, v^n)$ converges in distribution, along some subsequence, to $(Q, v)$, then $Q\in \sE_1[v]$ a.s.
\end{lemma}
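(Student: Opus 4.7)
The plan is to verify that a subsequential limit $(Q, v)$ of $(Q^n, v^n)$ satisfies the three ingredients defining membership in $\sE_1[v]$: namely, $Q \in \mathcal{P}_2(\mathcal{Z}_1)$; the coordinate $W$ is a standard Brownian motion under $Q$ with respect to the canonical filtration $\{\mathcal{F}_t\}$ on $\mathcal{Z}_1$; and the coordinate triple $(X, \rho, W)$ weakly solves $\mathcal{S}_1[v, \nu_Q]$ under $Q$. I would first invoke Skorohod's representation theorem to reduce, along the extracted subsequence, to almost sure convergence $(Q^n, v^n) \to (Q, v)$ on a common probability space. The $\mathcal{P}_2$ requirement then follows from Fatou's lemma applied to the uniform bound $\sup_n E[G([Q^n]_2)] \le L$ established inside the proof of Lemma \ref{tightness}, since $G$ is lower semicontinuous on $\mathcal{P}(\mathcal{R}_1)$.

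The core of the argument is a nonlinear martingale problem for $Q$. For each $f \in \sC_c^2(\R^d)$, introduce the functional
\begin{align*}
\Phi_f(x, r, w; \nu, \varphi)(t) &\doteq f(x(t)) - f(x(0)) - \int_0^t \nabla f(x(s))^T b(x(s), \nu(s))\,ds \\
&\quad - \tfrac{1}{2}\int_0^t \tr\left[\sigma\sigma^T(x(s), \nu(s))\nabla^2 f(x(s))\right] ds \\
&\quad - \int_{\R^m\times[0,t]} \nabla f(x(s))^T \sigma(x(s), \nu(s)) y\, r(dy\,ds) \\
&\quad - \int_0^t \nabla f(x(s))^T \alpha(x(s), \nu(s)) \varphi(s)\,ds,
\end{align*}
so that solving $\mathcal{S}_1[v, \nu_Q]$ amounts to showing $t \mapsto \Phi_f(X, \rho, W; \nu_Q, v)(t)$ is a continuous $\{\mathcal{F}_t\}$-martingale under $Q$ with the expected quadratic variation $\int_0^t \nabla f^T \sigma\sigma^T \nabla f\,ds$. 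Applying Ito's formula to $f(\bar X_i^n)$ and averaging in $i$, the non-martingale part is exactly $\langle \Phi_f(\cdot; \bar\mu^n, v^n)(t), Q^n\rangle$, while the martingale remainder equals
\begin{equation*}
\frac{1}{n}\sum_{i=1}^n \int_0^t \nabla f(\bar X_i^n(s))^T \sigma(\bar X_i^n(s), \bar\mu^n(s))\,dW_i(s) + \frac{\kappa(n)}{n}\sum_{i=1}^n \int_0^t \nabla f(\bar X_i^n(s))^T \alpha(\bar X_i^n(s), \bar\mu^n(s))\,dB(s).
\end{equation*}
The first sum has $L^2$-norm of order $n^{-1/2}$ by independence of $\{W_i\}$ and boundedness of $\sigma$, while the second is $O(\kappa(n))$, and both vanish as $n\to\infty$. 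Pairing against a bounded continuous $\mathcal{F}_s$-measurable test functional $\Psi$ and passing to the limit yields $E_Q[(\Phi_f(t) - \Phi_f(s))\Psi] = 0$. A parallel calculation on $f(X)^2$ gives the correct quadratic variation, and the observation that $[Q^n]_3$ is the empirical measure of iid Wiener paths identifies $W$ as a standard Brownian motion under $Q$ adapted to $\{\mathcal{F}_t\}$.

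The main technical obstacle is continuity of the drift correction $(x, r, \nu) \mapsto \int_{\R^m \times [0,t]} \nabla f(x(s))^T \sigma(x(s), \nu(s)) y\, r(dy\,ds)$, since the integrand grows linearly in $y$ and hence is not bounded on $\mathcal{R}$. Continuity on $\mathcal{X} \times \mathcal{R}_1 \times \sC([0,T]: \mathcal{P}(\R^d))$ is available due to the Wasserstein-1 topology on $\mathcal{R}_1$ and the BL-Lipschitz dependence of $\sigma$ on $\nu$, and the uniform second-moment bound $\sup_n E_{Q^n}[\int \|y\|^2 \rho(dy\,ds)] \le L$ provides the uniform integrability needed to push the limit inside the $Q^n \to Q$ pairing. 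Continuity of the $\alpha$-term in the weak $L^2$ convergence $v^n \to v$ is handled by dominated convergence using boundedness of $\alpha$. Once these continuity issues are dispatched, the martingale problem characterization, combined with the Brownian property of $W$, forces $Q$ to be a weak solution of $\mathcal{S}_1[v, \nu_Q]$, establishing $Q \in \sE_1[v]$ almost surely.
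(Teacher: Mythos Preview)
Your overall strategy---Skorohod, Fatou for the $\mathcal{P}_2$ bound, and a martingale-problem argument with a truncation to handle the linear growth in $y$---is the same as the paper's. However, your martingale problem uses test functions $f\in\sC_c^2(\R^d)$ of the state alone, and this creates a genuine gap.

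To conclude $Q\in\sE_1[v]$ you must verify two things simultaneously under $Q$: (i) the coordinate $W$ is a Brownian motion with respect to the \emph{full} canonical filtration $\mathcal{F}_t=\sigma\{X(s),\rho(A\times[0,s]),W(s):s\le t\}$, not just its own filtration; and (ii) the martingale part of $X$ is the stochastic integral $\int_0^\cdot\sigma(X(s),\nu_Q(s))\,dW(s)$ driven by that specific $W$. Your argument establishes neither. Passing to the limit in $[Q^n]_3$ only identifies the \emph{marginal} law of $W$ as Wiener measure; it does not show independence of $W(t)-W(s)$ from $\mathcal{F}_s$. And computing the quadratic variation of $\Phi_f$ via ``a parallel calculation on $f(X)^2$'' only identifies $\langle\Phi_f\rangle_t=\int_0^t\nabla f^T\sigma\sigma^T\nabla f\,ds$; it gives no information about the cross-variation $\langle\Phi_f,W\rangle$, which is exactly what is needed to pin down that the driving noise is the coordinate $W$ rather than some other Brownian motion on an enlarged space.

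The paper closes both gaps at once by taking test functions $f\in\sC_c^2(\R^d\times\R^m)$ jointly in $(x,w)$. The resulting generator $\mathcal{L}^\Theta_s$ then contains the second $w$-derivatives (encoding that $W$ is an $\mathcal{F}_t$-Brownian motion) and the mixed derivatives $\sum_{j,j'}\sigma_{jj'}\partial^2 f/\partial x_j\partial w_{j'}$ (encoding the covariation identifying the stochastic integral). Your proof can be repaired by enlarging the class of test functions in this way, or equivalently by adding a separate cross-variation computation between $\Phi_f$ and each component of $W$.
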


\begin{proof} 
	Let $(Q, v)$ be a weak limit point of $(Q^n, v^n)$ given on some probability space $(\Om^*, \clf^*, P^*)$.
Note that by Fatou's lemma, 
\begin{align}\label{eq:fatlemapp}
	E^*\left[ \int_{\sR_1}\int_{\R^m\times [0,T]} \|y\|^2\,r(dy\,dt)\,[Q]_2(dr) \right] \le \liminf_{n\to\infty} E\left[ \frac{1}{n} \sum_{i=1}^n \int_0^T \|u_i^n(t)\|^2\,dt \right] 
			\le L, 
\end{align}
Thus $Q\in \mathcal{P}_2(\mathcal{Z}_1)$, $P^*$-a.s. Also, since $\int_0^T \|v(s)\|^2 ds \le M$, $v \in L^2([0,T]:\RR^k)$ $P^*$-a.s.
To complete the proof, we need to argue that 
for $P^*$-a.e. $\omega \in \Omega^*$, $Q(\omega)$ is a weak solution to $\sS_1[v(\omega), \nu_{Q(\omega)}]$.

Denote the canonical coordinate variables on $\clz_1$ by $(z,r,w)$. By Condition \ref{empmzrinit},
$[Q^n]_1 \circ (z(0))^{-1} \to \xi_0$ weakly, which shows that, for $P^*$-a.e. $\om$, under $Q(\omega)$, $z(0)$ has distribution $\xi_0$.
Denote by $\{\sH_t\}_{0\le t\le T}$ the canonical filtration on $(\clz_1, \clb(\clz_1))$, namely
\begin{equation}\label{eq:eq428}
	 \sH_t \doteq \sigma \{ z(s), w(s), r (A \times [0,s]), \, A \in \clb(\RR^m),\, s \le t\}.
\end{equation}
For $f\in \sC_c^2(\R^d\times \R^m)$,  $\varphi \in L^2([0,T]:\R^k)$, and $\Theta \in \mathcal{P}(\mathcal{Z}_1)$, consider the process $\{M_{f,\varphi}^{\Theta}(t)\}_{0\le t\le T}$  defined on the probability space $(\mathcal{Z}_1, \sB(\mathcal{Z}_1), \Theta)$ by 
\begin{equation}\label{eq:eq359}
\begin{aligned}
	M_{f,\varphi}^\Theta(t, (z, r, w)) &\doteq  f(z(t), w(t)) - f(z(0), 0) - \int_0^t \int_{\R^m} \mathcal{L}^\Theta_s (f)(z(s), y, w(s))\,r_s(dy)\,ds \\
		&\quad - \int_0^t \left\il \alpha(z(s), \nu_{\Theta}(s)) \varphi(s), \nabla_x f(z(s), w(s)) \right\ir \,ds,
\end{aligned}
\end{equation}
where 
\begin{equation}
	\label{eq:eq358}
\begin{aligned}
	\mathcal{L}_s^\Theta(f)(x,y,w) &\doteq \il b(x,\nu_\Theta(s)) + \sigma(x,\nu_\Theta(s))y, \nabla_x f(x,w) \ir   \\
		&\quad + \frac{1}{2}\sum_{j,j'=1}^d (\sigma\sigma^T)_{jj'}(x,\nu_\Theta(s)) \frac{\partial^2f}{\partial x_j \partial x_{j'}}(x,w)  + \frac{1}{2}\sum_{j=1}^m \frac{\partial^2f}{\partial w_j^2}(x,w)  \\
		&\quad + \sum_{j=1}^d \sum_{j'=1}^m \sigma_{jj'}(x,\nu_\Theta(s))\frac{\partial^2 f}{\partial x_j \partial w_{j'}}(x,w)
\end{aligned}
\end{equation}
for $x \in \RR^d$ and  $y, w \in \R^m$. 
Let, for  $B \in (0,\infty)$, $\zeta_B: \R^m \to \R^m$ be such that $\zeta_B$ is a continuous function with compact support satisfying $\zeta_B(y)=y$ for $\|y\|\le B$ and $\|\zeta_B(y)\| \le \|y\| + 1$ for every $y\in \R^m$.
It will be convenient to also consider, 	along with $\mathcal{L}_s^{\Theta}$, the operator  $\mathcal{L}_s^{\Theta,B}$ which is defined by replacing $y$ on the right side 
of \eqref{eq:eq358}  with 
$\zeta_B(y)$. Similarly, define $M_{f,\varphi}^{\Theta,B}$ by replacing $\mathcal{L}^\Theta_s$ in \eqref{eq:eq359} with $\mathcal{L}^{\Theta,B}_s$.

It suffices to show that for each $f \in \sC^2_c(\R^d\times \R^m)$,  any   time instants $0\le t_0<t_1\le T$,  and any  $\Psi \in \sC_b(\mathcal{Z}_1)$ that is measurable with respect to the sigma field $\sH_{t_0}$, we have, 
\begin{equation}
	\label{eq:eq823}
	E_{Q(\omega)}\left[ \Psi \left( M_{f, v(\omega)}^{Q(\omega)}(t_1) - M_{f, v(\omega)}^{Q(\omega)}(t_0) \right) \right] = 0 \;
	\mbox{ for }
	P^* \mbox{-a.e. } \om \in \Om^* .
\end{equation}
In the rest of the proof we suppress $\omega$ from the notation.
Fix a choice of $(t_0, t_1, \Psi, f)$ and define 
$\Phi  : \mathcal{P}(\mathcal{Z}_1)\times S_M \to \R$
by 
\begin{equation}\label{Phidef}
	\Phi(\Theta, \varphi) = E_{\Theta}\left[ \Psi \left( M_{f,\varphi}^{\Theta}(t_1) - M_{f, \varphi}^{\Theta}(t_0) \right) \right]. 
\end{equation}
Also, for every $B \in (0,\infty)$, define $\Phi_B$ by replacing $M_{f,\varphi}^{\Theta}$ with $M_{f,\varphi}^{\Theta, B}$ in the definition of $\Phi$. 
  We will now show that (a) for every $B \in (0,\infty)$, $\Phi_B$ is a bounded and continuous map on $\clp(\clz_1)\times S_M,$  (b)
$\sup_n E^*|\Phi_B(Q^n, v^n) - \Phi(Q^n, v^n)| \to 0$ and $ E^*|\Phi_B(Q, v) - \Phi(Q, v)| \to 0$ as $B\to \infty$, and (c) $\Phi(Q^n, v^n) \to 0$ in probability as $n \to \infty$.
The statement in  \eqref{eq:eq823} is an immediate consequence of (a)-(c).

We first show (a). Let $(\Theta_n, \varphi_n) \to (\Theta, \varphi)$ in $\clp(\clz_1) \times S_M$ as $n\to \infty$.
Note that this means $\int_0^T \il \varphi_n(s) - \varphi(s), h(s) \ir\,ds \to 0$ for all $h \in L^2([0,T]:\R^k)$. Thus, 
\begin{equation}
	\label{eq:eq449}
\begin{aligned}
	\left| \Phi_B(\Theta, \varphi_n) - \Phi_B(\Theta, \varphi)\right| &\le \|\Psi\|_{\infty}E_\Theta \left| \int_{t_0}^{t_1} \left\il \alpha(z(s), \nu_{\Theta}(s))(\varphi_n(s)-\varphi(s)), \nabla_x f(z(s), w(s)) \right\ir \,ds \right|\\
&=	\|\Psi\|_{\infty} E_\Theta \left| \int_{0}^{T} 1_{[t_0, t_1]}(s)\left\il (\varphi_n(s) - \varphi(s)), \alpha^T(z(s), \nu_{\Theta}(s))\nabla_x f(z(s), w(s)) \right\ir \,ds \right|\\
	 &\to 0
\end{aligned} 
\end{equation}
as $n \to \infty$, where the last convergence follows from the dominated convergence theorem upon observing that 
$h(\cdot) = \alpha^T(z(\cdot), \nu_{\Theta}(\cdot))\nabla_x f(z(\cdot), w(\cdot)) 1_{[t_0, t_1]}(\cdot)$ is in $L^2([0,T]:\R^k)$.
Next note that
\begin{equation}\label{eq:eq446}
	\sup_{\bar \varphi \in S_M} \left|E_{\Theta_n}\left[ \Psi\cdot \left( M^{\Theta,B}_{f, \bar\varphi}(t_1) - M^{\Theta,B}_{f, \bar\varphi}(t_0) \right) \right] - E_{\Theta}\left[ \Psi\cdot \left( M^{\Theta, B}_{f, \bar\varphi}(t_1) - M^{\Theta, B}_{f, \bar\varphi}(t_0) \right) \right]  \right| \to 0
\end{equation}
as $n \to \infty$. 
This convergence is a consequence of the following facts: (i) Continuity and boundedness of the map
$(z,r,w) \mapsto f(z(t), w(t)) - \int_0^t \int_{\R^m} \mathcal{L}^{\Theta,B}_s (f)(z(s), y, w(s))\,r_s(dy)\,ds$, (ii) the continuity and boundedness of the map
$(z,w) \mapsto \alpha^T(z(s), \nu_{\Theta}(s))\nabla_x f(z(s), w(s))$, (iii) the property that $\sup_{\bar \varphi \in S_M} \int_0^T \|\bar \varphi(s)\|^2 ds \le M$, and Cauchy-Schwarz inequality.
Next, for some $c_1\in (0, \infty)$ (possibly depending on $B$), and all $t\in [0,T]$, $\bar\varphi \in S_M$
\begin{align*}
	\left| M^{\Theta_n, B}_{f, \bar\varphi}(t) - M^{\Theta, B}_{f, \bar\varphi}(t) \right| &\le \int_0^T \int_{\R^m} \left| \mathcal{L}_s^{\Theta_n, B}(f)(z(s), y, w(s)) - \mathcal{L}_s^{\Theta, B}(f)(z(s), y, w(s)) \right| \,r_s(dy)\,ds \\
						&\quad + \int_0^T \left| \left \il (\alpha(z(s), \nu_{\Theta_n}(s)) - \alpha(z(s), \nu_{\Theta}(s))\bar\varphi(s), \nabla_x f(z(s), w(s)) \right \ir \right| \,ds \\
						&\le c_1 \left(\int_0^T d_{BL}(\nu_{\Theta_n}(s), \nu_{\Theta}(s))^2\,ds\right)^{1/2}. 
\end{align*}
Since, for every $s \in [0,T]$, $\nu_{\Theta_n}(s) \to \nu_{\Theta}(s)$, we now have 
\begin{equation}\label{eq:eq447}
	\sup_{\bar \varphi \in S_M}\left|E_{\Theta_n}\left[ \Psi\cdot \left( M^{\Theta_n,B}_{f, \bar\varphi}(t_1) - M^{\Theta_n,B}_{f, \bar\varphi}(t_0) \right) \right] - E_{\Theta_n}\left[ \Psi\cdot \left( M^{\Theta,B}_{f, \bar\varphi}(t_1) - M^{\Theta,B}_{f, \bar\varphi}(t_0) \right) \right]\right| \to 0
\end{equation}
as $n \to \infty$. Combining \eqref{eq:eq446} and \eqref{eq:eq447}
\[
	\sup_{\bar \varphi \in S_M}\left| \Phi_B(\Theta_n, \bar\varphi) - \Phi_B(\Theta, \bar\varphi) \right| \to 0
\]
as $n \to \infty$. Together with \eqref{eq:eq449}, the above display completes the proof of (a).

In order to see (b), note that, for some $c_2 \in (0, \infty)$, and every $n \in \NN$,
\begin{align}
	E|\Phi_B(Q^n, v^n) - \Phi(Q^n, v^n)| &\le c_2 E \left[ E_{Q^n} \left[\int_0^T \left\|\int_{\R^m} (y-\zeta_B(y)) \, r_s(dy)\right\| ds \right] \right]\nonumber\\
	&= c_2 E \left[ \frac{1}{n} \sum_{i=1}^n \int_0^T \|u^n_i(s) - \zeta_B(u^n_i(s))\| ds \right] \nonumber\\
	&\le \frac{c_2}{B} E \left[\frac{1}{n} \sum_{i=1}^n \int_0^T 2(\|u^n_i(s)\| + 1)\|u^n_i(s)\| ds \right] \le \frac{4c_2(L+T)}{B}.\label{eq:4lt}
\end{align}
The first statement in (b) is now immediate. The second statement in (b) is shown similarly by using \eqref{eq:fatlemapp}.

Finally we consider (c).
By the definition of $Q^n$ and since $\nu_{Q^n}(s) = \bar{\mu}^n(s)$,  
\begin{align*}
	&\Phi(Q^n, v^n) \\
	&= E_{Q^n}\left[ \Psi \left( M_{f,v^n}^{Q^n}(t_1) - M_{f,v^n}^{Q^n}(t_0) \right) \right] \\
				    &= \frac{1}{n} \sum_{i=1}^n \Psi(\bar{X}_i^n, \rho_i^n, W_i) \cdot \left( M_{f,v^n}^{Q^n}(t_1, (\bar{X}_i^n, \rho_i^n, W_i)) - M_{f,v^n}^{Q^n}(t_0, (\bar{X}_i^n, \rho_i^n, 
					W_i)) \right) \\
				    &=  \frac{1}{n} \sum_{i=1}^n \Psi(\bar{X}_i^n, \rho_i^n, W_i) 
					\cdot \left( \phantom\int\hspace{-0.35cm} f(\bar{X}_i^n(t_1), W_i(t_1))  - f(\bar{X}_i^n(t_0), W_i(t_0))       \right. \\
				    & \quad \left.- \int_{t_0}^{t_1} \mathcal{L}_s^{Q^n}(f)(\bar{X}_i^n(s), u_i^n(s), W_i(s))\,ds - \int_{t_0}^{t_1}  
					\left\il \alpha(\bar{X}_i^n(s), \bar{\mu}^n(s)) v^n(s), \nabla_x f(\bar{X}_i^n(s), W_i(s)) \right\ir \,ds \right),
\end{align*}
By It\^{o}'s formula, for each $i$, a.s.
\begin{align*}
	&f(\bar{X}_i^n(t_1), W_i(t_1))  - f(\bar{X}_i^n(t_0), W_i(t_0)) \\
	& = \int_{t_0}^{t_1} \mathcal{L}_s^{Q^n}(f)(\bar{X}_i^n(s), u_i^n(s), W_i(s))\,ds +  \int_{t_0}^{t_1} \left\il \alpha(\bar{X}_i^n(s), \bar{\mu}^n(s)) v^n(s), \nabla_x f(\bar{X}_i^n(s), W_i(s)) \right\ir  \,ds\\
			& \quad + \int_{t_0}^{t_1} \left[ \nabla_x f(\bar{X}_i^n(s), W_i(s))\right]^T \sigma(\bar{X}_i^n(s), \bar{\mu}^n(s))\,dW_i(s) + \int_{t_0}^{t_1} \left[ \nabla_w f(\bar{X}_i^n(s), W_i(s)) \right]^T \,dW_i(s)\\
			& \quad + \kappa(n)\int_{t_0}^{t_1} \left[ \nabla_x f(\bar{X}_i^n(s), W_i(s))\right]^T \alpha(\bar{X}_i^n(s), \bar{\mu}^n(s))\,dB(s) \\
			& \quad + \frac{\kappa(n)^2}{2} \int_{t_0}^{t_1} \tr\left( (\alpha\alpha^T)(\bar{X}_i^n(s), \bar{\mu}^n(s)) D^2_x f(\bar{X}_i^n(s), W_i(s)) \right) \,ds. 
\end{align*}
Writing $\Psi_i^n = \Psi(\bar{X}_i^n, \rho_i^n, W_i)$,  we then have
\begin{align*}
	\Phi(Q^n, v^n) &=  \frac{1}{n}\sum_{i=1}^n \Psi_i^n   \int_{t_0}^{t_1} \left[ \nabla_x f(\bar{X}_i^n(s), W_i(s))\right]^T \sigma(\bar{X}_i^n(s),\bar{\mu}^n(s))\,dW_i(s) \\
	&\quad+ \frac{1}{n}\sum_{i=1}^n \Psi_i^n  \int_{t_0}^{t_1} \left[ \nabla_w f(\bar{X}_i^n(s), W_i(s)) \right]^T \,dW_i(s)  + \clt^n_1,
\end{align*}	
where using the fact that   $\kappa(n)\to 0$ as $n\to \infty$, we have that
$\clt^n_1\to 0$ in probability as $n\to \infty$.

Denote the first two terms on the right side of above display as $J^1_n$ and $J^2_n$ respectively.
Using the boundedness of $\Psi_i^n$, $\sigma$, $\nabla_x f$, the independence of the $W_i$, the fact that
$\Psi_i^n$ are $\clh_{t_0}$ measurable, and It\^{o}'s isometry, 
$E[(J^1_n)^2] \le c_3/n$
for some $c_3\in (0,\infty)$ and all $n \in \NN$. Thus $J^1_n \to 0$ in probability as $n\to \infty$. Similarly,
$J^2_n \to 0$ in probability as $n\to \infty$. Combining the above observations we have that $\Phi(Q^n, v^n)\to 0$ in probability, completing the proof of (c) and therefore of the lemma.
\end{proof}

Finally we complete the proof of the Laplace upper bound \eqref{eq:eq1137u} by proving \eqref{eq:varlb}.
By the definition of $Q^n$, 
\begin{align*}
	& E\left[ \frac{1}{2n} \sum_{i=1}^n \int_0^T \|u_i^n(t)\|^2\,dt + \frac{1}{2n\kappa(n)^2}\int_0^T \|v^n(t)\|^2\,dt + F(\bar{\mu}^n) \right] \\
			&= E\left[ \int_{\sR_1}\left(\frac{1}{2}\int_{\R^m\times [0,T]}\|y\|^2\,r(dy\,dt) \right)\, [Q^n]_2(dr) + 
			\frac{1}{2n\kappa(n)^2}\int_0^T \|v^n(t)\|^2\,dt + F([Q^n]_1) \right]. 
\end{align*}
Recall the uniform bound \eqref{eq:eq128}. Then from Lemmas \ref{tightness} and \ref{limit1},
$(Q^n, v^n)$ is tight and if $(Q,v)$ is a weak limit point then $Q \in \cle_1[v]$ a.s. Assume without loss of generality that
$(Q^n, v^n) \to (Q,v)$ along the full sequence.
 Then by Fatou's lemma and since $\sqrt{n}\kappa(n)\to \la$,
\begin{align*}
	&\liminf_{n\to\infty} E\left[ \frac{1}{2n} \sum_{i=1}^n \int_0^T \|u_i^n(t)\|^2\,dt + \frac{1}{2n \kappa(n)^2}\int_0^T \|v^n(t)\|^2\,dt + F(\bar{\mu}^n) \right] \\ 
		&\ge E\left[ \int_{\sR_1}\left(\frac{1}{2}\int_{\R^m\times [0,T]}\|y\|^2\,r(dy\,dt) \right)\, [Q]_2(dr) + \frac{1}{2\la^2}\int_0^T \|v(t)\|^2\,dt + F([Q]_1) \right] \\
		&\ge \inf_{\varphi \in L^2([0,T]:\R^k)} \inf_{\Theta\in \mathcal{E}_1[\varphi]} \left( E_{\Theta}\left[ \frac{1}{2} \int_{\R^m\times[0,T]} \|y\|^2\,\rho(dy\,dt) \right] + \frac{1}{2\la^2} \int_0^T \|\varphi(t)\|^2\,dt + F([\Theta]_1) \right), 
\end{align*}
where the last inequality uses the fact that $Q \in \cle_1[v]$ a.s.
This completes the proof of the Laplace upper bound.  
\hfill \qed

\subsection{Laplace Lower Bound} 
\label{sec:laplow2.1}

Throughout this section we assume that Conditions  \ref{empmzrinit}, \ref{Lip}, and \ref{sigmagamma} are satisfied. 
Fix $\veps>0$ and $F \in \clc_b(\clp(\clx))$. Choose a $\varphi \in L^2([0,T]:\R^k)$ and a $\Theta \in \mathcal{E}_1[\varphi]$ such that
\begin{align}\label{eq:chosphithe}
	\frac{1}{2}E_\Theta\left[ \int_{\R^m\times [0,T]}\|y\|^2\rho(dy\,dt) \right] + \frac{1}{2\la^2}\int_0^T \|\varphi(t)\|^2\,dt + F([\Theta]_1) \le \inf_{\nu \in \mathcal{P}(\mathcal{X})} \left[ F(\nu) + {I}_1(\nu) \right] + \veps. 
\end{align}
We will show that there is an $M\in (0,\infty)$ and a sequence $(u^n, v^n)$ with $u^n \in \mathcal{A}^{1,n}$ and $v^n \in \mathcal{A}^2_M$ constructed on some filtered probability space such that 
\begin{equation}\label{eq:eq1242}
\begin{aligned}
	&\limsup_{n \to \infty} E\left[ \frac{1}{2n} \sum_{i=1}^n \int_0^T \|u_i^n(t)\|^2\,dt + \frac{1}{2n \kappa(n)^2}\int_0^T \|v^n(t)\|^2\,dt + F(\bar{\mu}^n) \right] \\
	&\le \frac{1}{2}E_\Theta\left[ \int_{\R^m\times [0,T]}\|y\|^2\rho(dy\,dt) \right] + \frac{1}{2\la^2}\int_0^T \|\varphi(t)\|^2\,dt + F([\Theta]_1). 
\end{aligned}
\end{equation}
The Laplace lower bound \eqref{eq:eq1137l} is then immediate from Theorem  \ref{reptype1} on noting that $\veps>0$ is arbitrary.
The key ingredient in the proof of \eqref{eq:eq1242} is the following uniqueness result.
Define the map $\vartheta: \clz_1 \to \clz_1^{\vartheta}\doteq \R^d\times \clr_1\times \clw$ as 
$\vartheta(z,r,w) \doteq (z(0), r,w)$. For $\Theta \in \clp(\clz_1)$, let $\Theta_{\vartheta} \doteq \Theta \circ \vartheta^{-1}$
be the probability measure on $\clz_1^{\vartheta}$ induced by $\Theta$ under $\vartheta$.

We will say that {\em weak uniqueness} holds for \eqref{eq: controlled SDE} if, for any given $\varphi \in L^2([0,T]:\R^k)$ and $\Theta^{(1)}, \Theta^{(2)} \in \cle_1[\varphi]$, whenever
$\Theta^{(1)}_{\vartheta} = \Theta^{(2)}_{\vartheta}$, we have that $\Theta^{(1)} = \Theta^{(2)}$.

\begin{lemma}\label{pathunique} Weak uniqueness holds for \eqref{eq: controlled SDE}.
\end{lemma}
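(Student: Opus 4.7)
The plan is to exploit the shared $\vartheta$-marginal to realize both weak solutions on a single probability space driven by identical data, and then show that the corresponding state processes must coincide via a Gronwall estimate on the mean-square path difference. Set $\Theta^0 \doteq \Theta^{(1)}_\vartheta = \Theta^{(2)}_\vartheta \in \clp(\clz_1^{\vartheta})$ with $\clz_1^{\vartheta} = \R^d\times\clr_1\times\clw$, and write $\nu^{(i)}(t) \doteq [\Theta^{(i)}]_1(t)$, which is a deterministic continuous $\clp(\R^d)$-valued curve. Under Conditions \ref{Lip} and \ref{sigmagamma}, the SDE $\cls_1[\varphi, \nu^{(i)}]$ with $\nu^{(i)}$ frozen has coefficients that are Lipschitz in $x$ with a (possibly time-dependent) Lipschitz constant that is $\Theta^0$-a.s.\ in $L^2([0,T])$, the relaxed-control contribution being controlled by the moment bound $E_\Theta[\int\|y\|^2\,\rho(dy\,ds)] < \infty$. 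Standard Lipschitz SDE theory then yields strong existence and pathwise uniqueness for the frozen-$\nu$ equation, so by a Yamada--Watanabe argument there are measurable maps $F^{(i)}:\clz_1^{\vartheta}\to\clx$ such that $X^{(i)} \doteq F^{(i)}(\zeta_0, r, w)$ is the unique strong solution driven by the canonical data. Weak uniqueness of the frozen-$\nu$ SDE forces the joint law of $(X^{(i)}, r, w)$ under $\Theta^0$ to equal $\Theta^{(i)}$; in particular $\nu^{(i)}(t)$ is the $\Theta^0$-law of $X^{(i)}(t)$, so showing $X^{(1)} = X^{(2)}$ $\Theta^0$-a.s.\ will give $\Theta^{(1)} = \Theta^{(2)}$.

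With both processes living on the common space, I would estimate $D(t) \doteq E_{\Theta^0}[\sup_{u\le t}\|X^{(1)}(u) - X^{(2)}(u)\|^2]$ term by term. The $b$ and $\alpha\varphi$ drifts contribute $C\int_0^t (D(s) + d_{BL}(\nu^{(1)}(s),\nu^{(2)}(s))^2)\,ds$ by the Lipschitz property and Cauchy--Schwarz (absorbing the finite quantity $\int_0^T \|\varphi(s)\|^2\,ds$ into $C$). The stochastic integral $\int_0^u[\sigma(\nu^{(1)}(s)) - \sigma(\nu^{(2)}(s))]\,dW(s)$ is handled by Burkholder--Davis--Gundy together with the Ito isometry, contributing $C\int_0^t d_{BL}(\nu^{(1)}(s),\nu^{(2)}(s))^2\,ds$; Condition \ref{sigmagamma} enters here by making $\sigma$ a deterministic function of time alone. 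The same condition is decisive for the relaxed-control drift $\int_0^u[\sigma(\nu^{(1)}(s)) - \sigma(\nu^{(2)}(s))]\int y\,\rho_s(dy)\,ds$: with $\sigma(\nu^{(i)}(s))$ deterministic, Cauchy--Schwarz in time factors this difference out of $\int y\,\rho_s(dy)$, and the remaining random quantity $\int_0^T \|\int y\,\rho_s(dy)\|^2\,ds$ has finite expectation by Jensen and the $L^2$ moment bound, so the whole term is at most $C\int_0^t d_{BL}(\nu^{(1)}(s),\nu^{(2)}(s))^2\,ds$. The duality bound $d_{BL}(\nu^{(1)}(s),\nu^{(2)}(s)) \le E_{\Theta^0}\|X^{(1)}(s) - X^{(2)}(s)\| \le D(s)^{1/2}$ then converts every measure-difference into $D(s)$, yielding
\[
	D(t) \le C\int_0^t D(s)\,ds,
\]
and Gronwall's inequality forces $D\equiv 0$, hence $X^{(1)} = X^{(2)}$ $\Theta^0$-a.s.\ and $\Theta^{(1)} = \Theta^{(2)}$.

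The main obstacle is the relaxed-control drift. Without Condition \ref{sigmagamma}, the integrand $[\sigma(X^{(1)},\nu^{(1)}) - \sigma(X^{(2)},\nu^{(2)})]y$ couples the unknown deviation $\|X^{(1)} - X^{(2)}\|$ with the unbounded $\|y\|$ inside the $\rho$-integral, and the only a priori control on $\rho$ is its expected $L^2$ moment, which does not give pointwise-in-$s$ Lipschitz estimates strong enough to close the Gronwall loop. Condition \ref{sigmagamma} bypasses this by factoring the $x$-dependence out of the relaxed-control term, and this is precisely the role it plays in ensuring weak uniqueness of the controlled McKean--Vlasov SDE and, ultimately, in the construction of the near-optimal sequence required for the Laplace lower bound.
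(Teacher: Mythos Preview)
Your proposal is correct and follows essentially the same route as the paper: couple the two solutions on a common probability space driven by identical data $(x_0,\rho,W)$, then close a Gronwall loop on $E[\sup_{s\le t}\|X^{(1)}(s)-X^{(2)}(s)\|^2]$ using the Lipschitz conditions, the bound $d_{BL}(\nu^{(1)}(s),\nu^{(2)}(s))\le E\|X^{(1)}(s)-X^{(2)}(s)\|$, and crucially Condition~\ref{sigmagamma} to decouple the relaxed-control drift from the state. The only genuine difference is the coupling mechanism. The paper disintegrates each $\Theta^{(i)}$ over the common $\vartheta$-marginal $\Lambda$ and forms the conditionally independent product $\hat\Theta(dx^{(1)},dx^{(2)},dx_0,dr,dw)=\tilde\Theta^{(1)}(r,w,x_0,dx^{(1)})\tilde\Theta^{(2)}(r,w,x_0,dx^{(2)})\Lambda(dx_0,dr,dw)$ on $\clx\times\clx\times\clz_1^{\vartheta}$, then verifies directly that $W$ is a $\hat\clh_t$-Brownian motion and that each $X^{(i)}$ satisfies $\cls_1[\varphi,\nu^{(i)}]$ under $\hat\Theta$. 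You instead freeze $\nu^{(i)}$, observe that the resulting SDE is Lipschitz in the state, and invoke a Yamada--Watanabe argument to produce measurable strong-solution maps $F^{(i)}:\clz_1^{\vartheta}\to\clx$ with $\Theta^{(i)}=\Theta^0\circ(F^{(i)},\mathrm{id})^{-1}$. Your route is conceptually cleaner once the Yamada--Watanabe statement is available in the presence of a relaxed control as part of the driving data (which does require a word of justification, since this is not the textbook formulation); the paper's explicit disintegration avoids that appeal and is self-contained, at the cost of a slightly heavier construction. The subsequent estimates are identical.
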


\begin{proof} Fix $\varphi \in L^2([0,T]:\R^k)$ and $\Theta^{(1)}, \Theta^{(2)} \in \cle_1[\varphi]$. Suppose that $\Theta^{(1)}_{\vartheta} = \Theta^{(2)}_{\vartheta}\doteq \Lambda$.
	 Note that $\Theta^{(i)}$, $i=1,2$ can be disintegrated as
	$$\Theta^{(i)}(dx,\, dr,\, dw) = \tilde \Theta^{(i)}(r,w,x_0, dx)\Lambda(dx_0,\, dr,\, dw).$$
	Consider $\hat \clz_1 = \clx \times \clx \times \R^d \times \clr_1 \times \clw$.
	Define $\hat \Theta \in \clp(\hat \clz_1)$ as
	$$\hat \Theta (dx^{(1)},\, dx^{(2)},\, dx_0,\, dr,\, dw) \doteq \tilde \Theta^{(1)}(r,\,w,\, x_0,\, dx^{(1)}) \tilde \Theta^{(2)}(r,\,w,\, x_0,\, dx^{(2)}) \Lambda(dx_0,\, dr,\, dw)$$
and denote the coordinate maps on $\hat \clz_1$ as $(X^{(1)}, X^{(2)}, X_0, \rho, W)$. Note that the process $W$ is a Brownian motion with respect to the canonical filtration
$$\hat \clh_t \doteq \sigma\left\{X^{(1)}(s), X^{(2)}(s), \rho(A\times [0,s]), W(s),\; A \in \clb(\R^m), s \in [0,t]\right\}, \; t \in [0,T],$$
and for $i=1,2$, $X^{(i)}$  satisfy \eqref{eq: controlled SDE} 	with $\bar X$ replaced with $X^{(i)}$ and
$\nu(t)$ replaced with $\nu^{(i)}(t) \doteq \nu_{\Theta^{(i)}}(t)$. Also, $X^{(i)}(0)=X_0$ for $i=1,2$.
In order to prove the lemma it suffices to show that $X^{(1)} = X^{(2)}$ a.s.
Let $u(t) \doteq \int_{\RR^m} y \rho_t(dy)$, $t \in [0,T]$. Then $E_{\hat \Theta} \int_0^T \|u(t)\|^2 dt <\infty$.
By the Lipschitz properties of $b$, $\alpha$, and $\sigma$, the property that $\sigma(x,v)\equiv \sigma(v)$, and since $\varphi \in L^2([0,T]:\R^k)$,   we have that, for some  $c_1 \in (0,\infty)$ and for any $t\in [0,T]$, 
\begin{equation}\label{eq:simplethird}
\begin{aligned}
	E_{\hat \Theta}\left[ \sup_{0\le s \le t} \|X^{(1)}(s) - X^{(2)}(s)\|^2 \right] &\le  
			c_1  \int_0^{t} \left( E_{\hat \Theta}\|X^{(1)}(s) - X^{(2)}(s)\|^2 + d_{BL}(\nu^{(1)}(s), \nu^{(2)}(s))^2 \right) \,ds  \\
			&\quad + c_1E_{\hat \Theta}\left(\int_0^{T} d_{BL}(\nu^{(1)}(s), \nu^{(2)}(s)) \cdot \|u(s)\|\,ds \right)^2. 
\end{aligned}
\end{equation}
Since
$$
E_{\hat \Theta}\left(\int_0^{t} d_{BL}(\nu^{(1)}(s), \nu^{(2)}(s)) \cdot \|u(s)\|\,ds \right)^2 \le \int_0^{t} d_{BL}(\nu^{(1)}(s), \nu^{(2)}(s))^2 \,ds \cdot 
E_{\hat \Theta}\left[\int_0^{T}  \|u(s)\|^2\,ds\right]$$
and $E_{\hat \Theta}\int_0^{T}  \|u(s)\|^2\,ds<\infty$, we have, for all $t\in [0,T]$,
\begin{align*}
	E_{\hat \Theta}\left[\sup_{0\le s \le t} \|X^{(1)}(s) - X^{(2)}(s)\|^2\right] &\le  
			c_2  \int_0^{t} \left( E_{\hat \Theta}\|X^{(1)}(s) - X^{(2)}(s)\|^2 + d_{BL}(\nu^{(1)}(s), \nu^{(2)}(s))^2 \right) \,ds .
\end{align*}
Furthermore, for each $t$,  
\begin{align*}
	d_{BL}(\nu^{(1)}(t), \nu^{(2)}(t)) 
						       &= \sup_{f\in BL(\R^d)} \left| \int_{\hat{\mathcal{Z}_1}} f(X^{(1)}(t))\,d\hat \Theta - \int_{\hat{\mathcal{Z}_1}} f(X^{(2)}(t))\,d\hat \Theta \right| \le 
						       E_{\hat \Theta}\| X^{(1)}(t) - X^{(2)}(t) \|.
\end{align*}
Thus, for some $c_3 \in (0,\infty)$, we have, for all $t\in [0,T]$,
\[
	E_{\hat \Theta}\left[ \sup_{0\le s\le t}\|X^{(1)}(s) - X^{(2)}(s)\|^2\right] \le c_3 \int_0^t E_{\hat \Theta}\left[\sup_{0\le s \le  \tau}\|X^{(1)}(s) - X^{(2)}(s)\|^2\right]\,d\tau.
\]
By Gronwall's inequality, this shows that  $X^{(1)}$ and $X^{(2)}$ are indistinguishable on $[0,T]$ and completes the proof of the lemma.
\end{proof}

Now we return to the construction of $(u^n, v^n)$ that satisfy \eqref{eq:eq1242}, where recall that $\Theta$ and $\varphi$ are chosen to satisfy \eqref{eq:chosphithe}.
Let $(\bar X, \rho, W)$ be the coordinate maps on the space $(\mathcal{Z}_1, \sB(\mathcal{Z}_1), \Theta)$ equipped with the canonical filtration 
$\clh_t$, defined in \eqref{eq:eq428}, namely 
\[
	\mathcal{H}_t = \sigma\left\{X(s), \rho(A \times [0,s]), W(s) :  A \in \mathcal{B}(\R^m), s\le t\right\}.
\]
Since $\Theta \in \sE_1[\varphi]$, equation \eqref{eq: controlled SDE} is satisfied with $\nu(t) = \nu_{\Theta}(t)$ and $\nu_{\Theta}(0) = \xi_0$.

Disintegrate $\Theta_{\vartheta}$  as 
\[
	\Theta_{\vartheta}(dx, dr, dw) = \xi_0(dx)\,[\Theta]_3(dw)\,\hat{\Lambda}_0(x,  w, dr).
\]
Let $\mathcal{V} \doteq \sC([0,T]:\R^k)$ and define 
\[
	\Omega' \doteq (\mathcal{R}_1\times \mathcal{W})^{\infty} \times \mathcal{V}, \quad \sF' \doteq \sB(\Omega').
\]
Elements of $\Omega'$ are of the form $(r, w, \beta)$, where $\beta \in \clv$, $r = (r_1, r_2, \ldots)$, $w = (w_1, w_2, \ldots)$,  $r_i \in \mathcal{R}_1$ and $w_i \in \mathcal{W}$ for each $i \in \N$. On the measurable space $(\Omega', \sF')$ define the random variables 
\[
	W_i(t, (r, w, \beta)) \doteq w_i(t), \qquad B(t, (r, w, \beta)) \doteq \beta(t), \qquad \rho_i(r, w, \beta) \doteq r_i,
\]
for each $t \in [0,T]$ and $i\in \NN$. Let $\Gamma$ be the standard Wiener measure on $\mathcal{V}$.
Recall the initial values $\{x^n_i\}$ introduced in Section \ref{sec:intempdis}.
For each $n\in \N$, define the probability measure $P^n$ on $(\Omega', \sF')$ by 
\[
	dP^n(r, w, \beta) = \left[\bigotimes_{i=1}^n \,[\Theta]_3(dw_i)\,\hat{\Lambda}_0(x_i^n, w_i, dr_i)\, \bigotimes_{i=n+1}^\infty \,[\Theta]_{(2,3)}(dr_i, dw_i)\right]\otimes\,\Gamma(d\beta). 
\]
Under $P^n$,  $\{W_i\}_{1\le i\le n}$ and $B$ are mutually independent Brownian motions. Define the sequence $\{\Lambda^n\}_{n\in \N}$ of $\mathcal{P}(\R^d\times \mathcal{R}_1 \times \mathcal{W})$-valued random variables on $(\Omega', \sF')$ by
\[
	\Lambda^n(A\times R\times C) = \frac{1}{n} \sum_{i=1}^n \delta_{x_i^n}(A)\delta_{\rho_i}(R)\delta_{W_i}(C), \; A\times R\times C \in \sB(\R^d\times \sR_1 \times \mathcal{W}).
\]
Then by Condition \ref{empmzrinit}, 
\begin{equation}\label{weakPns}
	P^n \circ (\Lambda^n)^{-1} \to \delta_{\Theta_{\vartheta}}. 
\end{equation}
Let, for $n \in \NN$, $v^n \doteq \varphi$. Denoting $\int_0^T \|\varphi(s)\|^2 ds \doteq M$, we have that $v^n \in S_M$ for every $n$.
Next, for each $i\in \N$, let 
\begin{equation}\label{eq:equtorho}
	u_i(t) \doteq \int_{\R^m} y\,(\rho_i)_t(dy), \; t\in [0,T],
\end{equation}
 where $(\rho_i)_t(dy)\,dt = \rho_i(dy\,dt)$, and for each $n\in \N$, let $(\bar{X}^n_1, \ldots, \bar{X}_n^n)$ be the solution on $(\Omega', \sF', P^n)$ of the system \eqref{controlledn},
where $\bar{\mu}^n(t) = \frac{1}{n} \sum_{i=1}^n \delta_{\bar{X}^n_i(t)}$  for each $t\in [0,T]$. Unique solvability of the above equation is a consequence of our assumptions on the coefficients, namely Condition \ref{Lip}.

For each $n$, define the occupation measure $Q^n$ by \eqref{eq:bigempmzr}, replacing $\rho^n_i$ with $\rho_i$. That is, 
\[
	{Q}^n(B\times R\times D) \doteq \frac{1}{n}\sum_{i=1}^n \delta_{\bar{X}_i^n}(B)\delta_{\rho_i}(R)\delta_{W_i}(D),\; B\times R\times D \in \sB(\mathcal{Z}_1).
\]
 Let $E^n$ denote expectation over the probability measure $P^n$. Then
\begin{align}
	&\limsup_{n\to\infty} E^n \left[ \frac{1}{n} \sum_{i=1}^n \int_0^T \|u_i(t)\|^2\,dt \right] \nonumber\\
				&= \limsup_{n\to\infty} \frac{1}{n}\sum_{i=1}^n \int_{\sR_1\times \mathcal{W}} \int_0^T \left\| \int_{\R^m} y\,r_t(dy) \right\|^2 \,dt\,\hat{\Lambda}(x_i^n, w, dr)\,[\Theta]_3(dw) \nonumber\\
				&= E_\Theta\left[ \int_0^T  \left\| \int_{\R^m} y\,\rho_t(dy) \right\|^2 \,dt \right] \le E_\Theta\left[ \int_{\R^m\times [0,T]} \|y\|^2\,\rho(dy\,dt) \right] < \infty, \label{eq:eq645}
\end{align}
where the second equality is from Condition \ref{empmzrinit}.
It follows from Lemma \ref{tightness} that $\{({Q}^n, v^n)\}_{n\in \N}$ is tight.
 If $({Q}, {v})$ is a limit point of this sequence defined on some probability space $(\tilde{\Omega}, \tilde{\sF}, \tilde{P})$, then ${v} = \varphi$ $\tilde{P}$-a.s., and, by Lemma \ref{limit1}, ${Q} \in \sE_1[{v}] = \sE_1[\varphi]$ $\tilde{P}$-a.s.
Recall that $\Theta \in  \sE_1[\varphi]$ as well.
By \eqref{weakPns}, for $\tilde{P}$-a.e. $\omega \in \tilde{\Omega}$, $Q_{\vartheta}(\omega) = \Theta_{\vartheta}$.
 Thus
by the  weak uniqueness established in Lemma \ref{pathunique}, ${Q} = \Theta$ $\tilde{P}$-a.s., and so $Q^n\to \Theta$ in probability. Finally, 
\begin{equation}\label{eq:eq1078}
\begin{aligned}
	&\limsup_{n\to\infty} E^n\left[ \frac{1}{2n} \sum_{i=1}^n \int_0^T \|u_i(t)\|^2\,dt + \frac{1}{2n\kappa(n)^2}\int_0^T\|v^n(t)\|^2\,dt + F(\bar{\mu}^n) \right] \\
				&= \limsup_{n\to\infty} E^n\left[ \frac{1}{2n} \sum_{i=1}^n \int_0^T \|u_i(t)\|^2\,dt + \frac{1}{2n\kappa(n)^2}\int_0^T\|v^n(t)\|^2\,dt + F([{Q}^n]_1) \right] \\
				&\le \frac{1}{2}E_\Theta\left[ \int_{\R^m\times [0,T]}\|y\|^2\rho(dy\,dt) \right]
				 + \frac{1}{2\la^2} \int_0^T \|\varphi(t)\|^2\,dt + F([\Theta]_1),
\end{aligned}
\end{equation}
which follows from \eqref{eq:eq645}, the equality $v^n = \varphi$, the weak convergence $Q^n \to \Theta$, and the assumption that $\sqrt{n}\kappa(n) \to \lambda$. 
This proves \eqref{eq:eq1242} and completes the proof of the lower bound. \hfill \qed
\subsection{Rate Function Property}
\label{sec:ratefun2.1}
In this section we sketch the proof of the fact that $I_1$ defined in \eqref{eq:eq1135} is a rate function.
The proof is very similar to the Laplace upper bound and so some details are left to the reader.
We will assume   Conditions \ref{empmzrinit} and \ref{Lip} are satisfied.
Like with the proof of the upper bound, Condition \ref{sigmagamma} is not needed.

Fix $L \in (0,\infty)$, let $\Gamma_L \doteq \{ \nu \in \clp(\clx): I_1(\nu) \le L\}$, and let $\{\nu_n\}$ be a sequence in $\Gamma_L$. We need to show that the sequence has a limit point that lies in $\Gamma_L$.
From the definition of $I_1$, we can find, for each $n$, a $\varphi^n \in L^2([0,T]: \R^k)$ and a $\Theta^n \in \cle_1[\varphi^n]$ with $[\Theta^n]_1 = \nu^n$ such that
 \begin{equation}\label{eq:unifbdlevset}
	 E_{\Theta^n}\left[ \frac{1}{2} \int_{\R^m\times[0,T]} \|y\|^2\,\rho(dy\,dt) \right] + \frac{1}{2\la^2} \int_0^T \|\varphi^n(t)\|^2\,dt  \le L + \frac 1 n.
	 \end{equation}
 In particular, $\{\varphi^n\} \subset S_M$ where $M = 2(L+1)\la^2$.
 An argument similar to the proof of Lemma \ref{tightness} shows that the sequence $(\Theta^n, \varphi^n)$ is relatively compact in $\clp(\clz_1)\times S_M$.
 Suppose that $(\Theta^n, \varphi^n) \to (\Theta, \varphi)$ along some subsequence. Then (along the subsequence) $\nu^n \to \nu \doteq [\Theta]_1$.
 Sending $n\to \infty$ and using lower semicontinuity,
 $$E_{\Theta}\left[ \frac{1}{2} \int_{\R^m\times[0,T]} \|y\|^2\,\rho(dy\,dt) \right] + \frac{1}{2\la^2} \int_0^T \|\varphi(t)\|^2\,dt  \le L.$$
 Furthermore, since $\Theta^n \in \cle_1[\varphi^n]$,
  $\Phi(\Theta^n, \varphi^n)=0$ for each $n$, where $\Phi$ is as in \eqref{Phidef}.
  As shown in Lemma \ref{limit1}, for each $B<\infty$, $\Phi_B(\Theta^n, \varphi^n)\to \Phi_B(\Theta, \varphi)$. Also a similar argument as in \eqref{eq:4lt} shows that, as $B\to \infty$,
  $$\sup_{n \in \NN} |\Phi_B(\Theta^n, \varphi^n) - \Phi(\Theta^n, \varphi^n)| \to 0, \; |\Phi_B(\Theta, \varphi) - \Phi(\Theta, \varphi)|\to 0.$$
  It then follows that $\Phi(\Theta, \varphi)=0$, proving that 
  $\Theta \in \cle_1[\varphi]$.
  Thus, since $\nu = [\Theta]_1$,
  $$I_1(\nu) \le E_{\Theta}\left[ \frac{1}{2} \int_{\R^m\times[0,T]} \|y\|^2\,\rho(dy\,dt) \right] + \frac{1}{2\la^2} \int_0^T \|\varphi(t)\|^2\,dt  \le L.$$
  The result follows. \hfill \qed
  
\section{Proof of Theorem \ref{resulttype2}. }
\label{sec:proofthm2}
In this section we prove Theorem \ref{resulttype2}. 
Proof of part 1 follows by standard arguments and is therefore left to the Appendix. 
Proof of part 2 follows similar steps as that for Theorem \ref{resulttype1}. Namely, we prove the Laplace upper and lower bounds and show that the function $I_2$ introduced in \eqref{eq:ratefuni2} is a rate function.
The upper bound is established in Section \ref{sec:lapuppbdwted} while the lower bound is given in
Section \ref{sec:laplowbd2}. The rate function property is verified in Section \ref{sec:ratefunctype2}.

For $(u^n, v^n) \in \mathcal{A}^{1,n}\times \mathcal{A}^2_M$, we consider the following system of  controlled SDEs:
\begin{equation}\label{controlledn2}
\begin{aligned}
	d\bar{X}_i^n(t) &= b(\bar{X}_i^n(t), \bar{\mu}^n(t))\,dt +  \sigma(\bar{X}_i^n(t), \bar{\mu}^n(t))u^n_i(t)\,dt + \alpha(\bar{X}_i^n(t), \bar{\mu}^n(t))v^n(t)\,dt \\ 
				&\quad  +\sigma(\bar{X}_i^n(t), \bar{\mu}^n(t))\,dW_i(t) + \kappa(n) \alpha(\bar{X}_i^n(t), \bar{\mu}^n(t))\,dB(t), \\ 
	d\bar{A}_i^n(t) &= \bar{A}_i^n(t)c(\bar{X}_i^n(t), \bar{\mu}^n(t))\,dt + \bar{A}_i^n(t)\gamma^T(\bar{X}_i^n(t), \bar{\mu}^n(t))u_i^n(t)\,dt + \bar{A}_i^n(t)\beta^T(\bar{X}_i^n(t), \bar{\mu}^n(t))v^n(t)\,dt \\ 
				&\quad + \bar{A}_i^n(t)\gamma^T(\bar{X}_i^n(t), \bar{\mu}^n(t))\,dW_i(t) + \kappa(n) \bar{A}_i^n(t)\beta^T(\bar{X}_i^n(t), \bar{\mu}^n(t))\,dB(t), \\ 
	\bar{X}_i^n(0) &= x_i^n, \quad \bar{A}_i^n(0) = a_i^n,\quad 1\le i \le n,
\end{aligned}
\end{equation}
where $\bar{\mu}^n(t)$ is the weighted empirical measure 
\begin{equation}\label{eq:eq124}
	\bar{\mu}^n(t) = \frac{1}{n}\sum_{i=1}^n \theta(\bar{A}_i^n(t))\delta_{\bar{X}_i^n(t)}. 
\end{equation}
The existence and uniqueness of strong solutions of the above system of equations is argued in the same way as for the uncontrolled system in \eqref{eq:interempwtd} (see Appendix \ref{sec:llnwted}).

The following representation follows along the lines of Theorem \ref{reptype1}. 
Let $\clk \doteq \sC([0,T]:\mathcal{M}_+(\R^d))$.

\begin{theorem}\label{reptype2} For any $F \in \sC_b(\clk)$ and for each $n\in \N$, 
\begin{equation}\label{eq:seq2}
	-\frac{1}{n}\log E\left[e^{-n  F(\mu^n)}\right] =\inf_{(u, v) \in \mathcal{A}^{1,n}\times \mathcal{A}^2} E\left[ \frac{1}{2n} \sum_{i=1}^n \int_0^T \|u_i(t)\|^2\,dt + \frac{1}{2}\int_0^T \|v(t)\|^2\,dt + F(\bar{\mu}^n) \right].
\end{equation}
Furthermore, for every $\delta > 0$, there is an $M < \infty$ such that for each $n\in \N$, 
\begin{equation}\label{eq:seq2b}
	-\frac{1}{n}\log E\left[e^{-n  F(\mu^n)}\right] \ge \inf_{(u, v) \in \mathcal{A}^{1,n}\times \mathcal{A}_M^2} E\left[ \frac{1}{2n} \sum_{i=1}^n \int_0^T \|u_i(t)\|^2\,dt + \frac{1}{2}\int_0^T \|v(t)\|^2\,dt + F(\bar{\mu}^n) \right] - \delta. 
\end{equation}
\end{theorem}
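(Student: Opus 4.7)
The plan is to follow the proof of Theorem \ref{reptype1} essentially verbatim. Both identities are instances of the same variational representation for exponential functionals of finite-dimensional Brownian motions \cite{boudup, buddup3}, applied now to the bounded measurable functional
$$\Phi^n \doteq F\bigl(\mu^n(W_1, \ldots, W_n, B)\bigr),$$
where $\mu^n$ is viewed as a $\clk$-valued random variable which is a measurable function of the driving Brownian paths through the unique strong solution of \eqref{eq:interempwtd}. Strong existence and pathwise uniqueness under Conditions \ref{Lip}, \ref{LipA}, and \ref{theta} is established in Appendix \ref{sec:llnwted}, and the boundedness of $F$ then makes $\Phi^n$ a bounded measurable function of $(W_1, \ldots, W_n, B)$. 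Applying the representation of \cite{buddup3} to $n\Phi^n$ and dividing by $n$ produces an infimum over progressively measurable Girsanov controls for the driving Brownian motions, of an expectation matching the right side of \eqref{eq:seq2}. The identification of the Girsanov-shifted weighted empirical measure with the $\bar\mu^n$ in \eqref{eq:eq124} uses Girsanov's theorem together with pathwise uniqueness of the controlled system \eqref{controlledn2}, the latter being proved by the same Gronwall argument as in Appendix \ref{sec:llnwted} once one observes that adding bounded controls to the drift does not disturb the Lipschitz/growth structure of the coefficients.

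For the truncated bound \eqref{eq:seq2b}, I would use the standard soft argument based on $\|F\|_\infty < \infty$. Given $\delta > 0$, any near-optimizer $(u, v)$ of the right side of \eqref{eq:seq2} satisfies an a priori bound $E\int_0^T \|v(t)\|^2 dt \le c_0$ for some $c_0$ depending only on $\|F\|_\infty$ and $\delta$ (this is where the explicit scaling of the $v$-cost in \eqref{eq:seq2} enters). Markov's inequality yields $P(\int_0^T \|v(t)\|^2 dt > M) \le c_0/M$, and truncating $v$ to $v^M(t) \doteq v(t)\cdot \mathbf{1}_{\{\int_0^t \|v(s)\|^2 ds \le M\}}$ produces an admissible pair in $\sA^{1,n}\times \sA^2_M$ whose total cost differs from the original by at most $2\|F\|_\infty c_0/M$. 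Choosing $M$ sufficiently large makes this smaller than $\delta$, yielding \eqref{eq:seq2b}.

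Since the argument is a direct transcription of the proof of Theorem \ref{reptype1}, I expect no substantial new difficulty. The one place where the weight $\theta(A^n_i)$ could cause trouble is in the pathwise uniqueness of \eqref{controlledn2}: under Condition \ref{theta}(a), $\theta$ may be unbounded, but the estimates \eqref{eq:thetderbds} combined with It\^o's formula applied to $\theta(\bar A^n_i)$ give the required control on $d_{BL}(\bar\mu^{n,(1)}(t), \bar\mu^{n,(2)}(t))$ when comparing two solutions, while under Condition \ref{theta}(b) the Lipschitz property of $\theta$ makes this immediate. This pathwise uniqueness is the only technical point, and once it is in hand together with the bounds already developed in Appendix \ref{sec:llnwted}, the representation \eqref{eq:seq2} follows without further work.
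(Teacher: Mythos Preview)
Your proposal is correct and matches the paper's approach exactly: the paper does not give a separate proof of Theorem \ref{reptype2} but simply states that it ``follows along the lines of Theorem \ref{reptype1},'' deferring to the variational representation of \cite{boudup,buddup3} and to the pathwise uniqueness argument for the controlled system sketched in Appendix \ref{sec:llnwted}. Your write-up in fact supplies more detail than the paper does, in particular the explicit truncation argument for \eqref{eq:seq2b} and the observation about why Condition \ref{theta} suffices for pathwise uniqueness of \eqref{controlledn2}.
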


\subsection{Laplace Upper Bound}
\label{sec:lapuppbdwted}
In this section we show that for every $F \in \clc_b(\clk)$
\begin{equation}\label{eq:eq1137u2}
	\liminf_{n\to\infty} -\frac{1}{n} \log E\left[e^{-n F(\mu^n)}\right] \ge \inf_{\nu \in \clk} \left[ F(\nu) + {I}_2(\nu) \right],
\end{equation}
where $I_2$ is as in \eqref{eq:ratefuni2}.
Throughout the section we assume that Conditions  \ref{empmzrinit}, \ref{Lip}, \ref{LipA}, \ref{theta}, and \ref{wtdmzrinit} are satisfied. We will not make use of Conditions \ref{sigmagamma} and \ref{gammaconst} for the upper bound proof.

Fix $F \in \sC_b(\clk)$ and $\delta\in (0,1)$. From Theorem \ref{reptype2}, there is an $M < \infty$ and, for each $n\in \N$,  $(u^n, v^n)\in \mathcal{A}^{1,n} \times \mathcal{A}^2_M$ such that 
\begin{equation}\label{eq:seq2c}
	-\frac{1}{n}\log E\left[e^{-n  F(\mu^n)}\right] \ge E\left[ \frac{1}{2n} \sum_{i=1}^n \int_0^T \|u_i^n(t)\|^2\,dt + \frac{1}{2n \kappa(n)^2}\int_0^T \|v^n(t)\|^2\,dt + F(\bar{\mu}^n) \right] - \delta.
\end{equation}
We will next show that 
\begin{equation}\label{eq:varlb2}
\begin{aligned}
	&\liminf_{n\to\infty} E\left[ \frac{1}{2n} \sum_{i=1}^n \int_0^T \|u_i^n(t)\|^2\,dt + \frac{1}{2n \kappa(n)^2}\int_0^T \|v^n(t)\|^2\,dt + F(\bar{\mu}^n) \right] \\ 
	&\ge  \inf_{\varphi \in L^2([0,T]:\R^k)} \inf_{\Theta\in \mathcal{E}_2[\varphi]} \left( E_\Theta\left[ \frac{1}{2} \int_{\R^m\times[0,T]} \|y\|^2\,\rho(dy\,dt) \right] + \frac{1}{2\la^2} \int_0^T \|\varphi(t)\|^2\,dt + F(\nu_\Theta) \right), 
\end{aligned}
\end{equation}
where $\nu_\Theta$ is as in \eqref{eq:eq503}.
Since $\delta\in(0,1)$ is arbitrary, the desired bound in \eqref{eq:eq1137u2} is immediate from the above inequality on recalling the definition of $I_2$ in \eqref{eq:ratefuni2}.
In the rest of this section we prove \eqref{eq:varlb2}.  

We begin by observing that from \eqref{eq:seq2c} we have, as in Section \ref{sec:lapupp2.1}, that \eqref{eq:eq128} is satisfied.
The next two lemmas are  analogues of Lemmas \ref{tightness} and \ref{limit1}. In Lemma \ref{tightness2}  below, the result under Condition (ii) in \eqref{eq:1202} will be used for the proof of the LLN sketched in the Appendix.
\begin{lemma}\label{tightness2} 
		Suppose for some $M \in (0,\infty)$, $\{(u^n, v^n)\}_{n\in \N}$ is a sequence with $(u^n, v^n) \in \sA^{1,n}\times \sA^2_M$ for each $n$, and suppose $\{u^n\}_{n\in \N}$ satisfies, for some $L\in (0,\infty)$,
		\begin{equation}\label{eq:eq135b}
			\sup_{n \in \NN} E\left[ \frac{1}{n} \sum_{i=1}^n \int_0^T \|u_i^n(t)\|^2\,dt \right] \le L.
		\end{equation}
		 Define the $\mathcal{P}(\mathcal{Z}_2)$-valued random variable $Q^n$ as
		 \begin{equation}\label{eq:eq906}
		 	Q^n(A\times R\times C) = \frac{1}{n}\sum_{i=1}^n \delta_{(\bar{X}_i^n, \bar{A}_i^n)}(A)\delta_{\rho_i^n}(R)\delta_{W_i}(C), \;  A\times R\times C \in \sB(\mathcal{Z}_2),
		 \end{equation}
		where $\rho_i^n$ is as in \eqref{eq:eq1027}.
		Suppose that
		\begin{equation}\label{eq:1202}
			\mbox{ either  (i) } \gamma \equiv 0, \mbox{ or  (ii) } u^n_i\equiv 0 \mbox{ for all } i,n, \mbox{ or (iii) Condition \ref{theta}(a) holds. } 
		\end{equation}
		Then $\{(Q^n, v^n)\}_{n\in \N}$ is tight as a sequence of $\mathcal{P}(\mathcal{Z}_2) \times S_M$-valued random variables.
\end{lemma}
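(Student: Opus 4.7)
The strategy mirrors Lemma~\ref{tightness}: tightness of $\{v^n\}$ in $S_M$ is automatic by compactness, and $\{Q^n\}$ is shown tight on $\clp(\clz_2)$ marginal by marginal. The fourth marginal (the i.i.d.\ Brownians $W_i$) is trivially tight; the third marginal (the relaxed controls $\rho^n_i$) is tight via the tightness function $G(\theta)=\int_{\clr_1}\int\|y\|^2\,r(dy\,dt)\,\theta(dr)$ combined with \eqref{eq:eq135b}, exactly as in Lemma~\ref{tightness}; and the first marginal (the $\bar X^n_i$ component) follows from the Aldous--Kurtz criterion applied to $\gamma^n \doteq E[\tfrac{1}{n}\sum_{i=1}^n \delta_{\bar X^n_i}]$, provided we also establish the a priori total-mass bound
\[
\sup_{n} E[\bar\mu^n(\R^d)] \;=\; \sup_n \tfrac{1}{n}\sum_{i=1}^n E\Bigl[\sup_{0\le t\le T} \theta(\bar A^n_i(t))\Bigr] < \infty,
\]
which is needed to control the (only Lipschitz, not bounded) drift $b(\cdot,\bar\mu^n)$ through $d_{BL}(\bar\mu^n,0)$.

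The substantive new work is therefore (a) the total-mass bound just displayed, and (b) tightness of the second marginal of $Q^n$ on $\cly=\sC([0,T]:\R_+)$; both reduce to averaged moment and modulus-of-continuity estimates on $\theta(\bar A^n_i(\cdot))$ (equivalently, on $\log\bar A^n_i(\cdot)$). Applying It\^o to $\log\bar A^n_i$ yields
\[
\log \bar A^n_i(t) = \log a^n_i + \int_0^t \bigl[c - \tfrac12\gamma^T\gamma - \tfrac12\kappa(n)^2\beta^T\beta\bigr]\,ds + \int_0^t \gamma^T u^n_i(s)\,ds + \int_0^t \beta^T v^n(s)\,ds + N^n_i(t),
\]
with $N^n_i$ a martingale whose quadratic-variation coefficient is bounded by Condition~\ref{LipA}. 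Under cases (i) or (ii) of \eqref{eq:1202}, the potentially large term $\int_0^t \gamma^T u^n_i\,ds$ vanishes identically, and boundedness of $c,\gamma,\beta$, the inclusion $v^n\in S_M$, and Condition~\ref{wtdmzrinit}'s averaged bounds on $(a^n_i)^2$ and $(\log a^n_i)^-$ combine to yield $\sup_n\tfrac{1}{n}\sum_i E\sup_t|\log\bar A^n_i(t)|<\infty$ together with the corresponding Aldous--Kurtz modulus; Lipschitzness of $\theta$ (Condition~\ref{theta}(b)) then transfers the estimates to $\theta(\bar A^n_i)$.

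Under case (iii) one instead applies It\^o to $\theta(\bar A^n_i)$: by \eqref{eq:thetderbds}, every coefficient in the resulting expansion contains either a factor of $\theta'(\bar A^n_i)\bar A^n_i$ or of $\theta''(\bar A^n_i)(\bar A^n_i)^2$ and is hence bounded, so the control $u^n_i$ enters only against a bounded coefficient. Cauchy--Schwarz and the averaged $L^2$-bound \eqref{eq:eq135b}, combined with the Burkholder--Davis--Gundy inequality for the martingale part, then close both the moment estimate $\sup_n \tfrac{1}{n}\sum_i E\sup_t \theta(\bar A^n_i)^2<\infty$ and an averaged modulus estimate via a standard stopping-time argument as in Lemma~\ref{tightness}. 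Joint tightness of $(Q^n,v^n)$ on $\clp(\clz_2)\times S_M$ then follows from marginal tightness in the product topology. The main obstacle is precisely case (iii): the drift $\int_0^t\gamma^T u^n_i\,ds$ prevents any uniform per-particle bound on $\bar A^n_i$, and it is the logarithmic-growth conclusion of \eqref{eq:thetderbds}, combined with the averaged cost bound \eqref{eq:eq135b}, that converts per-particle failure into averaged success and simultaneously furnishes the total-mass control required for the first-marginal argument.
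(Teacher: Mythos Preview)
Your overall architecture matches the paper's, but you conflate two distinct tasks and this creates a genuine gap in each branch of \eqref{eq:1202}. The two tasks are (a) the total-mass bound $\sup_n \tfrac{1}{n}\sum_i E\sup_t \theta(\bar A^n_i(t))<\infty$, needed to control $b(\cdot,\bar\mu^n)$ for the first-marginal argument, and (b) tightness of $[Q^n]_2$ on $\cly$, i.e.\ of the empirical measure of the processes $\bar A^n_i$ themselves. Your parenthetical ``equivalently, on $\log\bar A^n_i(\cdot)$'' is not an equivalence: estimates on $\theta(\bar A)$ and estimates on $\log\bar A$ serve different purposes and neither implies the other in general.

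In cases (i)/(ii) your $|\log\bar A^n_i|$ estimates do deliver (b) via $\phi\mapsto e^\phi$, but the claim that ``Lipschitzness of $\theta$ then transfers the estimates to $\theta(\bar A^n_i)$'' is wrong: Lipschitzness in $a$ bounds $|\theta(a_1)-\theta(a_2)|$ by $|a_1-a_2|$, not by $|\log a_1-\log a_2|$, and an averaged bound on $E\sup_t|\log\bar A^n_i|$ gives no control on $E\sup_t \bar A^n_i$. The paper instead notes that in both cases the drift $\bar A^n_i\gamma^T u^n_i$ is absent from the SDE for $\bar A^n_i$ itself, so a direct Gronwall estimate yields $E\sup_t(\bar A^n_i)^2\le c(1+\|x^n_i\|^2+(a^n_i)^2+E\int_0^T\|u^n_i\|^2)$ (equation \eqref{eq:eq1048b}); linear growth of $\theta$ then gives (a), and the same bound gives (b) without passing through $\log$.

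In case (iii) your It\^o expansion of $\theta(\bar A^n_i)$ correctly yields (a) (this is the paper's \eqref{eq:eq217}), but it does \emph{not} give (b): under Condition~\ref{theta}(a), $\theta$ has at most logarithmic growth and may be bounded, so moment and modulus estimates on $\theta(\bar A)$ carry no information about $\bar A$. The paper handles (b) in case (iii) by returning to the $\log$ expansion: the term $\int_0^t\gamma^T u^n_i\,ds$ no longer vanishes, but it is controlled per particle by $K\sqrt{T}(\int_0^T\|u^n_i\|^2\,ds)^{1/2}$, and the averaged cost bound \eqref{eq:eq135b} then delivers the averaged estimate \eqref{eq:eq1048c} and the associated modulus, whence tightness of $[Q^n]_2$ via $\exp$.
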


\begin{proof} 
	Tightness of $\{v^n\}$ is immediate from the compactness of $S_M$. The tightness of $[Q^n]_3$ and $[Q^n]_4$ follows  as in the proof of Lemma \ref{tightness}.
	Finally we show the tightness of $[Q^n]_{1,2}$.
	If (i) or (ii) in \eqref{eq:1202} hold, this tightness
	 follows  as the proof of the tightness of $[Q^n]_1$ in Lemma \ref{tightness} on recalling Condition \ref{wtdmzrinit}, the linear growth property of $\theta$, and
	using the following estimate instead of \eqref{eq:eq1048}:
	\begin{equation}\label{eq:eq1048b}
		E\left[ \sup_{0\le s\le T} \left(\|\bar{X}_i^n(s)\|^2 +  (\bar{A}_i^n(s))^2\right)\right] \le c_1\left( 1 + \|x_i^n\|^2 + (a_i^n)^2+ E\left[ \int_0^T \|u_i^n(s)\|^2\,ds\right] \right). 
	\end{equation}
	For case (iii) in \eqref{eq:1202}, we cannot ensure the above square integrability property. However, one can proceed as follows.
	By It\^{o}'s formula, 
	\begin{align*}
		\theta(\bar A_i^n(t)) &= \theta(a_i^n) 		 + \int_0^t \theta'(\bar A_i^n(s))\bar A_i^n(s)\,dD_i^n(s) \\
					& \quad + \frac{1}{2}\int_0^t \theta''(\bar A_i^n(s))\bar A_i^n(s)^2\left(\|\gamma(\bar X_i^n(s), \bar \mu^n(s)) \|^2 + \kappa(n)^2 \|\beta(\bar X_i^n(s), \bar\mu^n(s)\|^2 \right)\,ds, 
	\end{align*}
	where 
	\begin{align*}
		D^n_i(t) &= \int_0^t c(\bar X_i^n(s), \bar \mu^n(s))\,ds + \int_0^t \gamma^T(\bar X_i^n(s), \bar \mu^n(s))\,dW_i(s) + \kappa(n) \int_0^t \beta^T(\bar X_i^n(s), \bar\mu^n(s))\,dB(s)\\
		&\quad + \int_0^t \gamma^T(\bar X_i^n(s), \bar \mu^n(s)) u^n_i(s) ds +  \int_0^t \beta^T(\bar X_i^n(s), \bar\mu^n(s)) v^n(s) ds.
	\end{align*}
	By the boundedness of the coefficients and using \eqref{eq:thetderbds}, i.e.
	$\sup_x |\theta'(x)x| + \sup_x |\theta''(x)x^2| < \infty$, we then have, for some $c_2 \in (0,\infty)$,
	\begin{equation}
		E\left[ \sup_{0\le s\le T} \left(\|\bar{X}_i^n(s)\|^2 +  (\theta(\bar{A}_i^n(s)))^2\right)\right] \le c_2\left( 1 + \|x_i^n\|^2 + (a_i^n)^2+ E\left[ \int_0^T \|u_i^n(s)\|^2\,ds\right] \right). 
		\label{eq:eq217}
	\end{equation}
	Using the above integrability, the tightness of $[Q^n]_1$ follows as in the proof of Lemma \ref{tightness}.
	In order to show the tightness of $[Q^n]_2$  we will use the fact that the map $\phi(\cdot) \mapsto e^{\phi(\cdot)}$ is a continuous map from
	$\clc([0,T]: \R)$ to $\clc([0,T]:\R_+)$. With this fact, it suffices to show that the collection $\{\frac{1}{n}\sum_{i=1}^n \delta_{\log \bar A^n_i(\cdot)}, n \in \NN\}$ is tight as a sequence of  $\clp(\clc([0,T]: \R))$-valued random variables. This tightness follows, once again as in the proof of Lemma \ref{tightness}, from Condition \ref{wtdmzrinit} and  the estimates
	\begin{equation}\label{eq:eq1048c}
		E\left[ \sup_{0\le s\le T} \left| \log(\bar A_i^n(s))\right| \right] \le c_3\left( 1 + |\log a_i^n| + E\left[ \int_0^T \|u_i^n(s)\|^2\,ds\right] \right)
	\end{equation}
	and
	\[
		E \left[ \left| \log\bar{A}_i^n(\tau+\eps) - \log \bar{A}_i^n(\tau) \right|^2 \right] \le c_3 \eps \left( 1  + E\left[ \int_0^T \|u_i^n(s)\|^2\,ds\right] \right), 
	\]
	where  $\tau$ is a stopping time taking values in $[0, T-\eps]$, and the constant $c_3$  does not depend on $n$, $i$, $\eps$, or the stopping time $\tau$.
	\end{proof}

\begin{lemma}\label{limit} Let  $\{(u^n, v^n)\}_{n\in \N}$ be as in Lemma \ref{tightness2}.  Suppose that one of the conditions in \eqref{eq:1202} is satisfied.
	Also suppose that $(Q^n, v^n)$ converges, in distribution, along a subsequence to 
	a $\clp(\clz_2) \times S_M$-valued random variable
	$(Q,v)$. Then $Q\in \sE_2[v]$ a.s.
\end{lemma}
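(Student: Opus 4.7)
The plan is to mirror the strategy of Lemma~\ref{limit1}, working now with the augmented coordinate tuple $(z, a, r, w)$ on $\clz_2$ and the nonlinear martingale problem associated with $\cls_2[v, \nu_Q]$. First I verify that $Q \in \mathcal{P}_2(\clz_2)$ almost surely: Fatou applied to \eqref{eq:eq135b} yields $E^*[\int_{\R^m \times [0,T]} \|y\|^2 \rho(dy\,dt)] < \infty$, while the moment bound \eqref{eq:eq1048b} (under cases (i) or (ii) of \eqref{eq:1202}) or the combination of \eqref{eq:eq217} and \eqref{eq:eq1048c} (under case (iii)) together with Fatou and the continuity of $\theta$ gives $E^*[\sup_{0\le t\le T} \theta(\bar A(t))] < \infty$. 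The Brownian motion property of $W$ under $Q$ and the initial-condition identification $(z(0), a(0)) \sim \eta_0$ carry over verbatim from Lemma~\ref{limit1}, now using Condition~\ref{wtdmzrinit}.

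For the main step I test against $f \in \sC^2_c(\R^d \times \R_+ \times \R^m)$, defining the analogue of $M^\Theta_{f,\varphi}(t)$ by subtracting from $f(z(t), a(t), w(t)) - f(z(0), a(0), 0)$ the time integral of the generator $\mathcal{L}^\Theta_s(f)$ of the joint process $(\bar X, \bar A, W)$ under $\cls_2[\varphi, \nu_\Theta]$, together with the $\varphi$-dependent drift contributions involving $\alpha(x, \nu_\Theta(s))\varphi(s)$ and $a \beta^T(x, \nu_\Theta(s))\varphi(s)$. Since $c, \gamma, \beta$ are bounded by Condition~\ref{LipA} and the linear-in-$a$ factor in the $\bar A$-dynamics is tamed by the compact support of $f$ in the $a$-variable, the generator is a bounded continuous functional of $(z, a, w)$ for each fixed $\Theta$ after the standard truncation $\zeta_B$ in $y$. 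Convergence $\Phi(Q^n, v^n) \to 0$ in probability then follows by applying It\^o's formula to $(\bar X^n_i, \bar A^n_i, W_i)$ under \eqref{controlledn2}: the $\kappa(n)$-driven $B$-terms vanish since $\kappa(n) \to 0$ and the coefficients are bounded, while the remaining stochastic integrals contribute sums over i.i.d.\ indices whose $L^2$-norms are $O(1/\sqrt n)$ by It\^o's isometry and the $\sH_{t_0}$-measurability of the prefactor, exactly as in the proof of Lemma~\ref{limit1}.

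The main obstacle lies in establishing continuity of the truncated functional $\Phi_B(\Theta, \varphi)$ on $\mathcal{P}(\clz_2) \times S_M$, because the generator depends on $\nu_\Theta(s)$ through the coefficients $b, \sigma, \alpha, c, \gamma, \beta$, and $\nu_\Theta$ itself is defined through the possibly unbounded weight $\theta(a)$. Under Theorem~\ref{resulttype2}(2)(ii) we have $\theta$ Lipschitz and $\gamma \equiv 0$, so \eqref{eq:eq1048b} delivers a uniform $L^2$-bound on $\bar A^n_i$, which upgrades the weak convergence $[Q^n]_{1,2} \to [Q]_{1,2}$ to uniform integrability of $\theta(\bar A^n(t))$ and hence to $\nu_{Q^n}(t) \to \nu_Q(t)$ in $\mathcal{M}_+(\R^d)$ for each $t$. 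Under Theorem~\ref{resulttype2}(2)(i), Condition~\ref{theta}(a) restricts $\theta$ to at most logarithmic growth; combining \eqref{eq:eq217} with the logarithmic estimate \eqref{eq:eq1048c} yields a uniform $L^2$-bound on $\sup_t \theta(\bar A^n(t))$, again providing the required uniform integrability. The modulus-of-continuity estimates produced inside the proof of Lemma~\ref{tightness2} then promote pointwise convergence of $\nu_{Q^n}(\cdot)$ to convergence in $\sC([0,T]:\mathcal{M}_+(\R^d))$.

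With $\nu_{Q^n} \to \nu_Q$ in hand, the continuity of $\Phi_B$ and the control of the truncation error (the analogue of \eqref{eq:4lt}, now including an additional term from the $a\gamma^T y$ contribution that is handled by the $L^2$-bound on $\int\|y\|^2\rho(dy\,dt)$ and the local boundedness of $a$ on $\supp f$) proceed exactly as in Lemma~\ref{limit1}, yielding $E_Q[\Psi \cdot (M^Q_{f, v}(t_1) - M^Q_{f, v}(t_0))] = 0$ almost surely for all admissible $(\Psi, f, t_0, t_1)$. A routine monotone class argument promotes this to the full martingale property with respect to $\{\sH_t\}$, and combining the resulting It\^o representation of $(\bar X, \bar A)$ with the Brownian motion property of $W$ gives $Q \in \cle_2[v]$ almost surely.
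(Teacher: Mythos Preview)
Your plan is correct and follows essentially the same route as the paper's proof: verify $Q\in\mathcal{P}_2(\mathcal{Z}_2)$ via Fatou and the moment bounds of Lemma~\ref{tightness2}, identify the initial law via Condition~\ref{wtdmzrinit}, and establish the martingale property of $M^{Q}_{f,v}$ through the three-step scheme (continuity of a truncated functional, uniform control of the truncation error, and It\^o's formula to get $\Phi(Q^n,v^n)\to 0$).

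The one organizational difference worth noting is how the discontinuity of $\Theta\mapsto\nu_\Theta$ is handled. You propose to leave $\nu_\Theta$ untruncated inside $\Phi_B$ and to separately establish $\nu_{Q^n}\to\nu_Q$ from uniform integrability of $\theta(\bar A^n_i(t))$; this requires you to treat $\Phi_B$ as a continuous function of $(\Theta,\varphi,\nu)$ with $\nu$ a third argument, and to first prove the \emph{joint} weak convergence $(Q^n,v^n,\nu_{Q^n})\Rightarrow(Q,v,\nu_Q)$, since the moment bounds are in expectation and do not deliver $\omega$-wise uniform integrability under $Q^n(\omega)$. The paper instead \emph{also} truncates the weighted measure, replacing $\nu_\Theta$ by $\nu_\Theta^B$ defined through $\theta(\cdot)\wedge B$ in the definition of $\Phi_B$; this makes $\Phi_B$ genuinely continuous on $\mathcal{P}(\mathcal{Z}_2)\times S_M$, and the extra truncation error $d_{BL}(\nu_{Q^n}(t),\nu_{Q^n}^B(t))$ is controlled in expectation by $c/B$ via the uniform second-moment bound on $\theta(\bar A^n_i)$. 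Both approaches use exactly the same moment estimates from Lemma~\ref{tightness2}; the paper's packaging avoids having to speak of $\nu_\Theta$ for general $\Theta\in\mathcal{P}(\mathcal{Z}_2)$ (where it may fail to be defined) and sidesteps the joint-convergence step, while your route has the virtue that the convergence $\nu_{Q^n}\to\nu_Q$ is needed anyway to finish the Laplace upper bound (the paper proves it immediately after this lemma).
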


\begin{proof} 
	Suppose that $(Q, v)$ is given on the probability space $(\Omega^*, \mathcal{F}^*, P^*)$. 
	In a similar manner as in the proof of Lemma \ref{limit1} (in particular using \eqref{eq:eq1048b} and \eqref{eq:eq217}) we see that 
	 $Q \in \mathcal{P}_2(\mathcal{Z}_2)$ $P^*$-a.s.
We need to show that $Q(\omega)$ is a weak solution to $\sS_2[v(\omega), \nu_{Q}(\omega)]$ for $P^*$-a.e. $\omega \in \Omega^*$.
Note that
$[Q^n]_{1,2} \circ (z(0), \vs(0))^{-1} \to \eta_0$ weakly, which shows that, for $P^*$-a.e. $\om$, under $Q(\omega)$, $(z(0), \vs(0))$ has distribution $\eta_0$,
where
$(z,\vs, r,w)$
denote the canonical coordinate variables on $\clz_2$.

Thus to prove the result
 it suffices to show that for every $f\in \sC_c^2(\R^m\times \R_+ \times \R^d)$, for a.e. $\omega$, $M^{Q(\omega)}_{f, v(\omega)}$ is a martingale under $Q(\omega)$ with respect to the canonical filtration $\tilde \clh_t \doteq \sigma \{ z(s), \vs(s), w(s), r (A \times [0,s]), \, A \in \clb(\RR^m),\, s \le t\}$, $t\in [0,T]$,
where for each $\varphi \in L^2([0,T]:\R^k)$ and $\Theta \in \mathcal{P}_2(\mathcal{Z}_2)$, the process $\{M_{f,\varphi}^\Theta(t), 0\le t\le T\}$ is defined on $(\mathcal{Z}_2, \sB(\mathcal{Z}_2), \Theta)$ by 
\begin{equation}\label{eq:mfphithet}
\begin{aligned}
	M_{f, \varphi}^\Theta(t, (\zz, \vs, r, w)) &= f(\zz(t), \vs(t), w(t)) - f(\zz(0), \vs(0), 0) - \int_0^t \int_{\R^{m}} \mathcal{L}^\Theta_s(f)(\zz(s), \vs(s), y, w(s))\,r_s(dy)\,ds \\
					&\quad - \int_0^t \left\il \alpha(\zz(s), \nu_{\Theta}(s))\varphi(s), \nabla_x f(\zz(s), \vs(s), w(s)) \right \ir\,ds \\
					&\quad - \int_0^t \vs(s)\beta^T(\zz(s), \nu_{\Theta}(s))\varphi(s) \frac{\partial f}{\partial a}(\zz(s), \vs(s), w(s))\,ds,  
\end{aligned}
\end{equation}
and where
\begin{equation}\label{eq:lthetasf} 
\begin{aligned}
	&\mathcal{L}_s^\Theta(f)(x, a, y, w) \\
	&\quad = \left \il b(x, \nu_\Theta(s)) + \sigma(x,\nu_\Theta(s))y, \nabla_x f(x, a, w) \right\ir + \left( a c(x, \nu_\Theta(s)) + a \gamma^T(x,\nu_\Theta(s))y\right)\frac{\partial f}{\partial a}(x, a, w)  \\
			&\qquad + \frac{1}{2} \sum_{j,j'=1}^d (\sigma\sigma^T)_{jj'}(\nu_{\Theta}(s))\frac{\partial^2f}{\partial x_j \partial x_{j'}}(x, a, w) + \frac{1}{2}a^2\|\gamma(x,\nu_{\Theta}(s))\|^2 \frac{\partial^2 f}{\partial a^2}(x, a, w) \\
			&\qquad + \frac{1}{2} \sum_{j=1}^d a(\sigma\gamma)_{j}(x,\nu_\Theta(s))\frac{\partial^2f}{\partial x_j \partial a}(x, a, w) + \frac{1}{2}\sum_{j=1}^m \frac{\partial^2 f}{\partial w_j^2}(x, a, w) \\
			&\qquad  + \sum_{j=1}^d \sum_{j'=1}^m \sigma_{jj'}(x,\nu_\Theta(s))\frac{\partial^2 f}{\partial x_j \partial w_{j'}} (x, a, w) + \sum_{j=1}^m a\gamma_j(x,\nu_{\Theta}(s)) \frac{\partial^2 f}{\partial a \partial w_j}(x, a, w), 
\end{aligned}
\end{equation}
for $(x, a, y, w) \in \R^d\times \R_+ \times \R^m\times \R^m$. 

In order to prove the martingale property, as previously, it suffices to show that for
any   time instants $0\le t_0<t_1\le T$,  and any  $\Psi \in \sC_b(\mathcal{Z}_2)$ that is measurable with respect to the sigma field $\tilde \sH_{t_0}$, we have, 
\begin{equation}
	\label{eq:eq823b}
	E_{Q(\omega)}\left[ \Psi \left( M_{f, v(\omega)}^{Q(\omega)}(t_1) - M_{f, v(\omega)}^{Q(\omega)}(t_0) \right) \right] = 0, \;
	\mbox{ for }
	P^* \mbox{-a.e. } \om \in \Om^*.
\end{equation}
We suppress $\omega$ in the notation of the remaining proof.
Fix a choice of $(t_0, t_1, \Psi, f)$ and define 
$\Phi  : \mathcal{P}_2(\mathcal{Z}_2)\times S_M \to \R$ 
by 
\begin{equation}\label{eq:phithetvar}
	\Phi(\Theta, \varphi) = E_{\Theta}\left[ \Psi \left( M_{f,\varphi}^{\Theta}(t_1) - M_{f, \varphi}^{\Theta}(t_0) \right) \right]. 
\end{equation}
Fix $B\in (0,\infty)$. For $\Theta \in \clp_2(\clz_2)$, define $\mathcal{L}_s^{\Theta,B}$ by replacing $y$ on the right side of \eqref{eq:lthetasf} by $\zeta_B(y)$ and 
$\nu_{\Theta}$ by $\nu_{\Theta}^B$, where $\zeta_B$ is as in the proof of Lemma \ref{limit1} and $\nu_{\Theta}^B \in \mathcal{K}$ is defined as
\begin{equation}\label{eq:eqnuthetab}
\il f, \nu_{\Theta}^B(t)\ir \doteq E_{\Theta}\left[ (\theta(\vs(t))\wedge B) f(z(t))\right], \; t \in [0,T],\; f \in \clc_b(\R^d).\end{equation}
Similarly define $M_{f,\varphi}^{\Theta,B}$ by replacing $\mathcal{L}_s^{\Theta}$ with $\mathcal{L}_s^{\Theta,B}$ and $\nu_{\Theta}$ with $\nu_{\Theta}^B$ in \eqref{eq:mfphithet}.
Finally, define $\Phi_B$ by replacing $M_{f,\varphi}^{\Theta}$ with $M_{f,\varphi}^{\Theta,B}$ on the right side of \eqref{eq:phithetvar}.
Then, as before, we will argue (a) for every $B \in (0,\infty)$, $\Phi_B$ is a bounded and continuous map on $\clp_2(\clz_2)\times S_M,$  (b)
$\sup_n E[|\Phi_B(Q^n, v^n) - \Phi(Q^n, v^n)|\wedge 1] \to 0$ and $ E^*[|\Phi_B(Q, v) - \Phi(Q, v)|\wedge 1] \to 0$ as $B\to \infty$,  
(c) $\Phi(Q^n, v^n) \to 0$ in probability as $n \to \infty$. The statement in \eqref{eq:eq823b} is immediate from (a)-(c).

Part (a) is shown exactly as in the proof of Lemma \ref{limit1}. 
Next consider (b). Using the Lipschitz property of the coefficients, for some $c_1\in (0,\infty)$ and all $n\in \NN$,
\begin{equation}\label{eq:phibphi}
\begin{aligned}
	|\Phi_B(Q^n, v^n) - \Phi(Q^n, v^n)| &\le c_1 \sup_{0\le t \le T} d_{BL}\left(\nu_{Q^n}(t), \nu_{Q^n}^B(t)\right) \left ( 1 + \frac{1}{n}\sum_{i=1}^n \int_0^T \|u^n_i(s)\| \, ds\right)\\
	&\quad + \frac{c_1}{n}\sum_{i=1}^n \int_0^T \|u^n_i(s) - \zeta_B(u^n_i(s))\| \, ds.
\end{aligned}
\end{equation}
Also, 
\begin{equation}
\sup_{0\le t \le T} d_{BL}\left(\nu_{Q^n}(t), \nu_{Q^n}^B(t)\right) \le \frac{1}{n}\sum_{i=1}^n \sup_{0\le t \le T} \theta(\bar A^n_i(t)) 1_{\left\{\sup_{0\le t \le T} \theta(\bar A^n_i(t))> B \right\}}
\le \frac{1}{nB}\sum_{i=1}^n \sup_{0\le t \le T} \left[\theta(\bar A^n_i(t))\right]^2.\label{eq:eqdblunif}\end{equation}
Combining this with the bounds in \eqref{eq:eq1048b}, \eqref{eq:eq217}, we have, for some $c_2 \in (0,\infty)$ and every $B <\infty$,
$$\sup_{n\in \NN} E \left[ \sup_{0\le t \le T} d_{BL}\left(\nu_{Q^n}(t), \nu_{Q^n}^B(t)\right) \right] \le \frac{c_2}{B}.$$
Fix $\eps \in (0,1)$ and using \eqref{eq:eq135b} choose $m_1 \in (0,\infty)$ such that 
%$m_1 > L/\eps$ so that
$$\sup_{n \in \NN} P\left(\frac{1}{n}\sum_{i=1}^n \int_0^T \|u^n_i(s)\| ds > m_1\right) <\eps.$$
Then using the inequality $E[(UV)\wedge 1] \le P(V > m_1+1) + (m_1+1) E[U]$
for non-negative random variables $U$ and $V$, we have
$$E\left[\left\{c_1 \sup_{0\le t \le T} d_{BL}\left(\nu_{Q^n}(t), \nu_{Q^n}^B(t)\right) \left ( 1 + \frac{1}{n}\sum_{i=1}^n \int_0^T \|u^n_i(s)\| ds\right)\right\}\wedge 1\right] \le \eps + \frac{(m_1+1) c_1 c_2}{B}.$$
Using this estimate in \eqref{eq:phibphi}, for some $c_3 \in (0,\infty)$, 
$$\sup_{n \in \NN} E\left[|\Phi_B(Q^n, v^n) - \Phi(Q^n, v^n)|\wedge 1\right] \le \frac{c_3(1+ m_1)}{B} + \eps.$$
Sending $B\to \infty$ and since $\eps$ is arbitrary, we have the first statement in (b). The second statement in (b) follows in a similar manner on noting the properties
$$ E^* \left[E_{Q}\left[\sup_{0\le t \le T} \theta(\vs(t))^2\right] \right]< \infty, \quad E^*\left[ E_{Q}\left[\int_0^T \left\| \int_{\R^m} y \, r_s(dy)\right\|^2 \,ds \right]\right] <\infty, $$
which follow from analogous (uniform in $n$) bounds when $Q$ is replaced by $Q^n$ and $E^*$ by $E$.

Finally we consider (c). For each $n \in \NN$,
\begin{align*}
	&\Phi(Q^n, v^n) \\
	&= E_{Q^n}\left[ \Psi \left( M^{Q^n}_{f, v^n}(t_1) - M^{Q^n}_{f, v^n}(t_0) \right) \right] \\
						&= \frac{1}{n} \sum_{i=1}^n \Psi(\bar{X}_i^n, \bar{A}_i^n, \rho_i^n, W_i) 
						 \left( M^{Q^n}_{f, v^n}(t_1, (\bar{X}_i^n, \bar{A}_i^n, \rho_i^n, W_i))  -   M^{Q^n}_{f, v^n}(t_0, (\bar{X}_i^n, \bar{A}_i^n, \rho_i^n, W_i)) \right) \\
						&= \frac{1}{n}\sum_{i=1}^n  \Psi(\bar{X}_i^n, \bar{A}_i^n, \rho_i^n, W_i)  \left( f(\bar{X}_i^n(t_1), \bar{A}_i^n(t_1), W_i(t_1)) -  f(\bar{X}_i^n(t_0), \bar{A}_i^n(t_0), W_i(t_0))  
						 - \int_{t_0}^{t_1} \clu^n(s) \, ds \right), 
\end{align*}
where, noting that $\nu_{Q^n}(s) = \bar{\mu}^n(s)$,
\begin{align*}
\clu^n(s)	&\doteq  \mathcal{L}_s^{Q^n}(f)(\bar{X}_i^n(s), \bar{A}_i^n(s), u_i^n(s), W_i(s))
	+  [\nabla_x f(\bar{X}_i^n(s), \bar{A}_i^n(s), W_i(s))]^T \alpha(\Bar{X}_i^n(s), \bar{\mu}^n(s))v^n(s) \\
	&\quad +   \bar{A}_i^n(s) \beta^T(\bar{X}_i^n(s), \bar{\mu}^n(s))v^n(s) \frac{\partial f}{\partial a}(\bar{X}_i^n(s), \bar{A}_i^n(s), W_i(s)). 
\end{align*}
By It\^{o}'s formula, for each $i$ and $n$, we have a.s. that 
\begin{align*}
	&f(\bar{X}_i^n(t_1), \bar{A}_i^n(t_1),  W_i(t_1)) - f(\bar{X}_i^n(t_0), \bar{A}_i^n(t_0), W_i(t_0))\\
	 &= \int_{t_0}^{t_1} \clu^n(s) \,ds 
		+ \int_{t_0}^{t_1} [\nabla_x f(\bar{X}_i^n(s), \bar{A}_i^n(s), W_i(s)) ]^T\sigma(\bar X^n_i(s), \bar{\mu}^n(s))\,dW_i(s) \\
		&\quad + \kappa(n)\int_{t_0}^{t_1} [\nabla_x f(\bar{X}_i^n(s), \bar{A}_i^n(s), W_i(s)) ]^T \alpha(\bar{X}_i^n(s), \bar{\mu}^n(s))\,dB(s) \\
		&\quad+ \int_{t_0}^{t_1} \frac{\partial f}{\partial a}(\bar{X}_i^n(s), \bar{A}_i^n(s), W_i(s)) \bar{A}_i^n(s)\gamma^T(\bar{X}_i^n(s), \bar{\mu}^n(s))\,dW_i(s) \\
		&\quad+ \kappa(n)\int_{t_0}^{t_1} \frac{\partial f}{\partial a}(\bar{X}_i^n(s), \bar{A}_i^n(s), W_i(s)) \bar{A}_i^n(s) \beta^T(\bar{X}_i^n(s), \bar{\mu}^n(s)\,dB(s) \\
		&\quad+ \int_{t_0}^{t_1} [\nabla_w f(\bar{X}_i^n(s), \bar{A}_i^n(s), W_i(s))]^T \,dW_i(s) 
		+ \clt^{n}_i,
\end{align*}
where, for some $c_1 \in (0,\infty)$, $|\clt^{n}_i|\le c_1\kappa(n)^2$ for all $n,i$.
Letting $\Psi^n_i = \Psi(\bar{X}_i^n, \bar{A}_i^n, \rho_i^n, W_i)$,
$f_i^n(s) = f(\bar{X}_i^n(s), \bar{A}_i^n(s), W_i(s))$, and using similar notation for derivatives of $f$, we  have a.s. that
\begin{align*}
	&\Phi(Q^n, v^n) \\
	&\quad=\frac{1}{n}\sum_{i=1}^n \Psi_i^n \Bigg[ \int_{t_0}^{t_1} [\nabla_x f_i^n(s)]^T \sigma(\bar{X}_i^n(s),\bar{\mu}^n(s))\,dW_i(s) + \int_{t_0}^{t_1} \frac{\partial f_i^n(s)}{\partial a} \bar{A}_i^n(s) \gamma^T(\bar{X}_i^n(s), \bar{\mu}^n(s))\,dW_i(s)  \\
	&\qquad +  \int_{t_0}^{t_1}[\nabla_w f^n_i(s)]^T \,dW_i(s)\Bigg] +\clt^n, 
\end{align*}
where, as in the proof of Lemma \ref{limit1}, $\clt^n \to 0$ in probability.
Now by the same argument as in Lemma \ref{limit1}, $\Phi(Q^n, v^n) \to 0$ in probability, proving (c). Thus we have $\Phi(Q, v) = 0$ a.s., which proves \eqref{eq:eq823b} and completes the proof. 
\end{proof}

We now complete the proof of \eqref{eq:varlb2}. 
In addition to the standing assumptions of this section (namely Conditions   \ref{empmzrinit}, \ref{Lip}, \ref{LipA}, \ref{theta} and \ref{wtdmzrinit} )
suppose that if Condition \ref{theta}(a) is not satisfied then $\gamma = 0$.

Since $\bar{\mu}^n = \nu_{Q^n}$, we have
\begin{align*}
	&E\left[ \frac{1}{2n} \sum_{i=1}^n \int_0^T \|u_i^n(t)\|^2\,dt + \frac{1}{2n\kappa(n)^2}\int_0^T \|v^n(t)\|^2\,dt + F(\bar{\mu}^n) \right] \\
			&= E\left[ \int_{\sR_1}\left(\frac{1}{2}\int_{\R^m\times [0,T]}\|y\|^2\,r(dy\,dt) \right)\, [Q^n]_2(dr) + \frac{1}{2n\kappa(n)^2}\int_0^T \|v^n(t)\|^2\,dt + F(\nu_{Q^n}) \right], 
\end{align*}
where $[Q^n]_2$ denotes the second marginal of $Q^n$.
Recalling the bound \eqref{eq:eq128}, we have from Lemmas \ref{tightness2} and \ref{limit} that,
$(Q^n, v^n)$ is tight and if $(Q,v)$ is a weak limit point then $Q \in \cle_2[v]$ a.s. Assume without loss of generality that
$(Q^n, v^n) \to (Q,v)$ along the full sequence.
We claim that $(Q^n, v^n, \nu_{Q^n}) \to(Q, v, \nu_Q)$, in distribution, in $\clp(\clz_2)\times S_M \times \clk$.
For $\Theta \in \clp(\clz_2)$ and $B\in (0,\infty)$, define $\nu_{\Theta}^B \in \clk$ as in \eqref{eq:eqnuthetab}, i.e.
$$\nu_{\Theta}^B(t)(C) \doteq E_{\Theta}\left[\left(\theta(\vs(t)) \wedge B \right) 1_C(z(t))\right], \quad C \in \clb(\R^d).$$
Then it is easy to check that, since $\theta(\cdot)\wedge B$ is a bounded Lipschitz function, $\Theta \mapsto \nu_{\Theta}^B$ is a continuous map from
$\clp(\clz_2)$ to $\clk$ for every $B$. Also, from \eqref{eq:eqdblunif},
for some $c_1 \in (0,\infty)$,
\begin{align*}
	\sup_{n\in \NN} E \left[ \sup_{0\le t \le T} d_{BL}\left(\nu_{Q^n}(t), \nu_{Q^n}^B(t) \right)  \right]
	&\le \frac{c_1}{B} \sup_{n\in \NN}E \left[ \frac{1}{n}\sum_{i=1}^n \sup_{0\le t \le T} \left(\theta(\bar A^n_i(t))\right)^2 \right]  \to 0
\end{align*}
as $B\to \infty$, since 
 $\sup_{n\in \NN}E [\frac{1}{n}\sum_{i=1}^n \sup_{0\le t \le T} (\theta(\bar A^n_i(t)))^2]< \infty$, which follows from \eqref{eq:eq1048b} and linear growth of $\theta$ when property (i)  of \eqref{eq:1202} holds and from \eqref{eq:eq217} when property (iii) in \eqref{eq:1202} is satisfied.
Combining the above uniform convergence with
the fact that $(Q^n, v^n, \nu_{Q^n}^B) \Rightarrow (Q, v, \nu_Q^B)$ for every $B$ proves the claim.

Finally by Fatou's lemma and since $\sqrt{n}\kappa(n)\to \la$,
\begin{align*}
	&\liminf_{n\to\infty} E\left[ \frac{1}{2n} \sum_{i=1}^n \int_0^T \|u_i^n(t)\|^2\,dt + \frac{1}{2n\kappa(n)^2}\int_0^T \|v^n(t)\|^2\,dt + F(\bar{\mu}^n) \right] \\ 
		&\ge E\left[ \int_{\sR_1}\left(\frac{1}{2}\int_{\R^m\times [0,T]}\|y\|^2\,r(dy\,dt) \right)\, [Q]_2(dr) + \frac{1}{2\la^2}\int_0^T \|v(t)\|^2\,dt + F(\nu_Q) \right] \\
		&\ge \inf_{\varphi \in L^2([0,T]:\R^k)} \inf_{\Theta\in \mathcal{E}_2[\varphi]} \left( E_{\Theta}\left[ \frac{1}{2} \int_{\R^m\times[0,T]} \|y\|^2\,\rho(dy\,dt) \right] + \frac{1}{2\la^2} \int_0^T \|\varphi(t)\|^2\,dt + F(\nu_{\Theta}) \right). 
\end{align*}
This proves \eqref{eq:varlb2} and completes the proof of the Laplace upper bound. \hfill \qed

\subsection{Laplace Lower Bound}
\label{sec:laplowbd2}
Throughout this section we assume that Conditions \ref{empmzrinit}-\ref{gammaconst} are satisfied. Additionally we assume that if Condition \ref{theta}(a) does not hold then $\gamma=0$.
We will proceed as in Section
\ref{sec:laplow2.1}.

Fix $\veps>0$ and $F \in \clc_b(\clk)$. Choose a $\varphi \in L^2([0,T]:\R^k)$ and a $\Theta \in \mathcal{E}_2[\varphi]$ such that
\begin{align*}
	\frac{1}{2}E_\Theta\left[ \int_{\R^m\times [0,T]}\|y\|^2\rho(dy\,dt) \right] + \frac{1}{2\la^2}\int_0^T \|\varphi(t)\|^2\,dt + F(\nu_{\Theta}) \le \inf_{\nu \in \clk} \left[ F(\nu) + {I}_2(\nu) \right] + \veps. 
\end{align*}
We will show that there is a $M\in (0,\infty)$ and a sequence $(u^n, v^n)$ with $u^n \in \mathcal{A}^{1,n}$ and $v^n \in \mathcal{A}^2_M$ constructed on some filtered probability space such that 
\begin{equation}\label{eq:eq1242b}
\begin{aligned}
	&\limsup_{n \to \infty} E\left[ \frac{1}{2n} \sum_{i=1}^n \int_0^T \|u_i^n(t)\|^2\,dt + \frac{1}{2n \kappa(n)^2}\int_0^T \|v^n(t)\|^2\,dt + F(\bar{\mu}^n) \right] \\
	&\le \frac{1}{2}E_\Theta\left[ \int_{\R^m\times [0,T]}\|y\|^2\rho(dy\,dt) \right] + \frac{1}{2\la^2}\int_0^T \|\varphi(t)\|^2\,dt + F(\nu_{\Theta}). 
\end{aligned}
\end{equation}
The Laplace lower bound 
\begin{equation*}
	\limsup_{n\to\infty} -\frac{1}{n} \log E\left[e^{-n F(\mu^n)}\right] \le \inf_{\nu \in \clk} \left[ F(\nu) + {I}_2(\nu) \right]
\end{equation*}
is then immediate from Theorem  \ref{reptype2} on noting that $\veps>0$ is arbitrary.
We begin with the following uniqueness result.
Analogous to Section \ref{sec:laplow2.1}, define the map $\vartheta: \clz_2 \to \clz_2^{\vartheta}\doteq \R^d\times \R_+\times \clr_1\times \clw$ as 
$\vartheta(z,\vs, r,w) \doteq (z(0), \vs(0), r,w)$. For $\Theta \in \clp(\clz_2)$, let $\Theta_{\vartheta} \doteq \Theta \circ \vartheta^{-1}$
be the probability measure on $\clz_2^{\vartheta}$ induced by $\Theta$ under $\vartheta$.
We will say that {\em weak uniqueness} holds for \eqref{controlledSDEtype2} if, for any given $\varphi \in L^2([0,T]:\R^k)$ and $\Theta^{(1)}, \Theta^{(2)} \in \cle_2[\varphi]$, whenever
$\Theta^{(1)}_{\vartheta} = \Theta^{(2)}_{\vartheta}$, we have that $\Theta^{(1)} = \Theta^{(2)}$.

\begin{lemma}\label{pathunique2} Weak uniqueness holds for \eqref{controlledSDEtype2}.
\end{lemma}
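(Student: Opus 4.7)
The plan is to mirror the proof of Lemma \ref{pathunique}, but now build a coupling that also tracks the weight process $\bar{A}$ and close a Gr\"onwall loop on both the state differences and the weighted measure differences. Fix $\varphi\in L^2([0,T]:\R^k)$ and $\Theta^{(1)},\Theta^{(2)}\in \cle_2[\varphi]$ with $\Theta^{(1)}_\vartheta=\Theta^{(2)}_\vartheta\doteq\Lambda$. First I would disintegrate
\[
\Theta^{(i)}(dx,da,dr,dw)=\tilde{\Theta}^{(i)}(r,w,x_0,a_0,dx\,da)\,\Lambda(dx_0,da_0,dr,dw)
\]
and, on $\hat{\clz}_2\doteq \clx\times\cly\times\clx\times\cly\times\R^d\times\R_+\times\clr_1\times\clw$, define
\[
\hat{\Theta}\doteq \tilde{\Theta}^{(1)}(r,w,x_0,a_0,\cdot)\otimes\tilde{\Theta}^{(2)}(r,w,x_0,a_0,\cdot)\otimes\Lambda
\]
with coordinate maps $(X^{(1)},A^{(1)},X^{(2)},A^{(2)},X_0,A_0,\rho,W)$. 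A standard check shows $W$ is a Brownian motion with respect to the filtration generated by these coordinates, and each pair $(X^{(i)},A^{(i)})$ solves the system in $\sS_2[\varphi,\nu^{(i)}]$ with $\nu^{(i)}\doteq \nu_{\Theta^{(i)}}$ and the same initial data $(X_0,A_0)$. The goal is then to show $(X^{(1)},A^{(1)})=(X^{(2)},A^{(2)})$ $\hat{\Theta}$-a.s.

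Next I would derive the two coupled Gr\"onwall estimates. Conditions \ref{sigmagamma} and \ref{gammaconst} are essential: since $\sigma$ and $\gamma$ depend on the state only through $\nu^{(i)}$, the stochastic integrals against $W$ in $X^{(1)}-X^{(2)}$ and in the linear SDE for $\log A^{(i)}$ involve only $\nu$-differences, so no It\^o-isometry term couples the two copies through the Brownian noise. Letting $u(t)\doteq\int_{\R^m}y\,\rho_t(dy)$, with $\hat{E}\int_0^T\|u(t)\|^2\,dt<\infty$, the computation of \eqref{eq:simplethird} carries over essentially unchanged to give
\[
\hat{E}\sup_{s\le t}\|X^{(1)}(s)-X^{(2)}(s)\|^2\le c_1\int_0^t\bigl(\hat{E}\|X^{(1)}(s)-X^{(2)}(s)\|^2+d_{BL}(\nu^{(1)}(s),\nu^{(2)}(s))^2\bigr)\,ds.
\]
For the weights, under Condition \ref{theta}(a) I would apply It\^o's formula to $\log A^{(i)}$ (which satisfies a linear SDE with bounded coefficients) to get, for some $p\in[1,2]$,
\[
\hat{E}\sup_{s\le t}|A^{(1)}(s)-A^{(2)}(s)|^p\le c_2\int_0^t\bigl(\hat{E}|A^{(1)}(s)-A^{(2)}(s)|^p+\hat{E}\|X^{(1)}(s)-X^{(2)}(s)\|^p+d_{BL}(\nu^{(1)}(s),\nu^{(2)}(s))^p\bigr)\,ds,
\]
using the Lipschitz continuity of $c$ and $\beta$ together with the bounds \eqref{eq:thetderbds} to control moments of $\theta(A^{(i)})$ and of $A^{(i)}$ itself via the logarithmic growth implied by $|\theta'(x)x|+|\theta''(x)x^2|<\infty$.

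To close the loop I would bound the weighted bounded-Lipschitz distance: for $f\in BL(\R^d)$,
\[
|\il f,\nu^{(1)}(t)\ir-\il f,\nu^{(2)}(t)\ir|\le \hat{E}|\theta(A^{(1)}(t))-\theta(A^{(2)}(t))|+\hat{E}\bigl[\theta(A^{(2)}(t))\,(\|X^{(1)}(t)-X^{(2)}(t)\|\wedge 2)\bigr],
\]
so the Lipschitz property of $\theta$ (which holds under either part of Condition \ref{theta}) and a Cauchy--Schwarz split let us dominate $d_{BL}(\nu^{(1)}(t),\nu^{(2)}(t))$ by a constant times a power of $\hat{E}|A^{(1)}(t)-A^{(2)}(t)|$ plus a term of the form $(\hat{E}\|X^{(1)}(t)-X^{(2)}(t)\|^2)^{1/2}$, provided one has uniform integrability of $\theta(A^{(2)}(t))$ to some power. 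Feeding this back into the previous Gr\"onwall inequalities and iterating yields $X^{(1)}=X^{(2)}$ and $A^{(1)}=A^{(2)}$ a.s., hence $\Theta^{(1)}=\Theta^{(2)}$.

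The main obstacle is the case of Condition \ref{theta}(a): here $\theta$ can be unbounded and appears as a weight in $\nu^{(i)}$, so $d_{BL}(\nu^{(1)},\nu^{(2)})$ genuinely couples the $X$- and $A$-differences in a nonlinear fashion, and one needs sharp uniform-in-$t$ moment bounds on $\theta(A^{(i)}(t))$. This is where the quantitative bounds \eqref{eq:thetderbds} become indispensable, since they let one pass from It\^o's formula for $\theta(A^{(i)})$ (whose $dW$-integrand is $\theta'(A^{(i)})A^{(i)}\gamma^T(\nu^{(i)})$, bounded by hypothesis) to the required uniform moment control. Under Condition \ref{theta}(b) with $\gamma\equiv 0$, by contrast, the $A$-SDE is driftlike with bounded log-coefficients, so $A^{(i)}$ is a.s.\ bounded by a deterministic constant depending only on $T,K,\|\varphi\|_{L^2}$, and the entire argument simplifies to the direct Gr\"onwall step analogous to Lemma \ref{pathunique}.
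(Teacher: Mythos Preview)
Your coupling construction and the overall Gr\"onwall strategy match the paper's. The case $\gamma\equiv 0$ under Condition~\ref{theta}(b) is essentially fine (though note $A^{(i)}$ is not bounded by a deterministic constant since $A_0$ is random; what you actually have is $A^{(i)}(t)\le CA_0$ with $E_{\hat\Theta}[A_0^2]<\infty$, which is exactly \eqref{eq:eq340b} and suffices for the Cauchy--Schwarz split).

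There is a genuine gap in the Condition~\ref{theta}(a) case. You propose to run Gr\"onwall on $\hat E|A^{(1)}-A^{(2)}|^p$ and to control $|\theta(A^{(1)})-\theta(A^{(2)})|$ via the Lipschitz constant of $\theta$, but both steps require moment bounds on $A^{(i)}$ itself, and these are \emph{not} available when $\gamma\not\equiv 0$. Indeed $\log A^{(i)}$ contains the term $\int_0^t\gamma^T(\nu^{(i)}(s))u(s)\,ds$; since one only has $\hat E\int_0^T\|u\|^2\,ds<\infty$ and no exponential moment, there is no reason for $\hat E[(A^{(i)})^p]$ to be finite for any $p>0$. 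Your sentence ``moments of $\theta(A^{(i)})$ and of $A^{(i)}$ itself via the logarithmic growth'' conflates two things: \eqref{eq:thetderbds} does give $\hat E[\sup_t\theta(A^{(i)}(t))^2]<\infty$ (this is \eqref{eq:eq340}), but logarithmic growth of $\theta$ says nothing about moments of $A^{(i)}$.

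The missing idea is that $\sup_x|\theta'(x)x|<\infty$ is precisely the statement that $x\mapsto\theta(e^x)$ is Lipschitz on $\R$, so
\[
|\theta(A^{(1)}(s))-\theta(A^{(2)}(s))|\le c_5\,|\log A^{(1)}(s)-\log A^{(2)}(s)|.
\]
This lets you bound $d_{BL}(\nu^{(1)}(s),\nu^{(2)}(s))$ by a constant times $(g(s)+\tilde h(s))^{1/2}$ with $\tilde h(t)\doteq(\hat E\sup_{s\le t}|\log A^{(1)}(s)-\log A^{(2)}(s)|)^2$, entirely avoiding moments of $A^{(i)}$. The Gr\"onwall loop then closes on $(g,\tilde h)$: the $\log A$-difference satisfies an SDE whose coefficients are bounded Lipschitz in $(x,\nu)$ (here Condition~\ref{gammaconst} is used so that the $\gamma^T u$ and $\gamma^T\,dW$ contributions involve only $d_{BL}(\nu^{(1)},\nu^{(2)})$), and the $(\gamma(\nu^{(1)})-\gamma(\nu^{(2)}))^Tu$ term is handled by Cauchy--Schwarz against $\hat E\int_0^T\|u\|^2\,ds$. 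Once you switch the state variable from $A$ to $\log A$ in the \ref{theta}(a) case, your outline goes through.
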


\begin{proof} Fix $\varphi \in L^2([0,T]:\R^k)$ and $\Theta^{(1)}, \Theta^{(2)} \in \cle_2[\varphi]$. Suppose that $\Theta^{(1)}_{\vartheta} = \Theta^{(2)}_{\vartheta}\doteq \Lambda$.
	  Note that $\Theta^{(i)}$, $i=1,2$ can be disintegrated as
	$$\Theta^{(i)}(dx,\, da,\, dr,\, dw) = \tilde \Theta^{(i)}(dx,\,da,\,x_0,\, a_0, \,r, \,w)\Lambda(dx_0,\, da_0,\, dr,\, dw).$$
Consider $\hat \clz_2 = \clx \times \cly \times \clx \times \cly \times \mathcal{Z}_2^\vartheta$, and define $\hat \Theta \in \clp(\hat \clz_2)$ as
	\begin{align*}
		&\hat \Theta (dx^{(1)},\, da^{(1)},\, dx^{(2)},\, da^{(2)},\, dx_0,\, da_0,\, dr,\, dw) \\
		&\doteq \tilde \Theta^{(1)}(dx^{(1)},\,da^{(1)},\, x_0,\, a_0,\,r, \,w ) \,
	\tilde \Theta^{(2)}(dx^{(2)},\,da^{(2)},\, x_0,\, a_0,\,r, \,w )\, \Lambda(dx_0,\, da_0,\, dr,\, dw),
	\end{align*}
and denote the coordinate maps on $\hat \clz_2$ as $(X^{(1)}, A^{(1)}, X^{(2)}, A^{(2)}, X_0, A_0, \rho, W)$. Note that $\{W(t), t\in[0,T]\}$ is a Brownian motion  with respect to the canonical filtration
$$\hat \clh_t \doteq \sigma\left\{X^{(i)}(s), A^{(i)}(s), \rho(A\times [0,s]), W(s),\; i= 1,2, \;A \in \clb(\R^m), \;s \in [0,t]\right\}, \quad t \in [0,T],$$
and for $i=1,2$, $(X^{(i)}, A^{(i)})$  satisfy \eqref{controlledSDEtype2} 	with $(\bar X, \bar A)$ replaced with $(X^{(i)}, A^{(i)})$ and $\nu(t)$ replaced with $\nu^{(i)}(t) \doteq \nu_{\Theta^{(i)}}(t)$.
In order to prove the lemma it suffices to show that $(X^{(1)}, A^{(1)}) = (X^{(2)}, A^{(2)})$ a.s.
Let $u(t) \doteq \int_{\RR^m} y \, \rho_t(dy)$, $t \in [0,T]$. Then, $E_{\hat \Theta} \int_0^T \|u(t)\|^2 \,dt <\infty$.
By similar estimates as in the proof of Lemma \ref{tightness2} we see that 
\begin{equation}
\mbox{ when Condition \ref{theta}(a) is satisfied, }	E_{\hat \Theta}\left[ \sup_{0\le s\le T} \left(\|X^{(i)}(s)\|^2 +  (\theta({A}^{(i)}(s)))^2\right)\right] < \infty \mbox{ for } i=1,2, 
\label{eq:eq340}	
\end{equation}
and
\begin{equation}
\mbox{ when $\gamma=0$, }	E_{\hat \Theta}\left[ \sup_{0\le s\le T} \left(\|X^{(i)}(s)\|^2 +  ({A}^{(i)}(s))^2\right)\right] < \infty \mbox{ for } i=1,2.
\label{eq:eq340b}	
\end{equation}

Consider first the case $\gamma=0$.
For $t\in [0,T]$, define 
\[
	g(t) = E_{\hat \Theta} \left[\sup_{0\le s\le t} \|X^{(1)}(s) - X^{(2)}(s)\|^2\right], \quad h(t) = \left( E_{\hat \Theta} \left[\sup_{0\le s\le t} |A^{(1)}(s) - A^{(2)}(s)| \right]\right)^2.
\]
Since $\theta$ is a Lipschitz function under Condition \ref{theta}, we have
\begin{align*}
	d_{BL}\left( \nu^{(1)}(s), \nu^{(2)}(s)\right) 
							&\le \sup_{f\in BL(\R^d)} E_{\hat \Theta} \left| \theta(A^{(1)}(s)) f(X^{(1)}(s)) - \theta(A^{(2)}(s)) f(X^{(2)}(s)) \right| \\
							&\le E_{\hat \Theta}\left[ \theta(A^{(1)}(s)) \|X^{(1)}(s) - X^{(2)}(s)\| \right] + L E_{\hat \Theta} \left|A^{(1)}(s) - A^{(2)}(s)\right|, 
\end{align*}
where $L$ is the Lipschitz constant for $\theta$. Then by the Cauchy-Schwarz inequality and \eqref{eq:eq340b}, for some $c_1 \in (0,\infty)$, 
\begin{equation}\label{eq:eqdblnu1nu2}
	\sup_{0\le s\le t} d_{BL}\left( \nu^{(1)}(s), \nu^{(2)}(s)\right)^2 \le c_1( g(t) + h(t))\; \mbox{ for all } t \in [0,T].
\end{equation}
By the Lipschitz properties of $b$, $\sigma$, and $\alpha$, the property $\sigma(x,\nu) = \sigma(\nu)$, the Burkholder-Davis-Gundy and Cauchy-Schwarz inequalities, and the fact that $\int_0^T \|\varphi(s)\|^2\,ds < \infty$, there are  $c_2, c_3 \in (0, \infty)$ such that, for all $t \in [0,T]$, 
\begin{align}
	g(t) &\le c_2 E_{\hat \Theta}\left[ \int_0^t \sup_{0\le \tau\le s} \left( \|X^{(1)}(\tau) - X^{(2)}(\tau)\|^2 + d_{BL}\left(\nu^{(1)}(\tau), \nu^{(2)}(\tau)\right)^2 \right)\,ds\right] \nonumber\\
		&\quad+ c_2 E_{\hat \Theta} \left[\left( \int_0^t \sup_{0\le \tau \le s} d_{BL}\left(\nu^{(1)}(\tau), \nu^{(2)}(\tau)\right) \cdot \|u(s)\|\,ds \right)^2\right] \nonumber\\
		&\le c_3 \int_0^t (g(s) + h(s))\,ds. \label{eq:eq504}
\end{align}
Furthermore, since $\gamma=0$, for $j = 1, 2$, $A^{(j)}(t) = e^{Y^{(j)}(t)}$, where 
\[
	Y^{(j)}(t) = Y^{(j)}(0) + \int_0^t c({X}^{(j)}(s), \nu^{(j)}(s))\,ds + \int_0^t\beta^T({X}^{(j)}(s), \nu^{(j)}(s))\varphi(s)\,ds.
\]
Using the inequality $|e^x - e^y| \le (e^x\vee e^y)|x-y|$, the Lipschitz property of $c$ and $\beta$, \eqref{eq:eqdblnu1nu2} and \eqref{eq:eq340b},	and the Cauchy-Schwarz inequality, there is $c_4 \in (0,\infty)$ such that 
\begin{equation}\label{eq:eq434}
\begin{aligned}
	h(t) &\le \left( E_{\hat \Theta} \left[ \sup_{0\le s\le t} (A^{(1)}(s) \vee A^{(2)}(s)) |Y^{(1)}(s) - Y^{(2)}(s)|\right] \right)^2 \\
		&\le E_{\hat \Theta} \left[ \sup_{0\le s\le t} (A^{(1)}(s) \vee A^{(2)}(s))^2 \right] E_{\hat \Theta} \left[\sup_{0\le s\le t}|Y^{(1)}(s) - Y^{(2)}(s)|^2 \right] \\
		&\le c_4 \int_0^t (g(s) + h(s))\,ds. 
\end{aligned}
\end{equation}
Thus, 
\[
	g(t) + h(t) \le (c_3+c_4) \int_0^t (g(s) + h(s))\,ds \; \mbox{ for every } t \in [0,T], 
\]
and hence by Gronwall's inequality, $g(T) + h(T) = 0$, from which it follows that $(X^{(1)}, A^{(1)})$ and $(X^{(2)}, A^{(2)})$ are indistinguishable on $[0,T]$. 

Consider now the case where Condition \ref{theta}(a) is satisfied. Define
\[
	\tilde h(t) = \left( E_{\hat \Theta}  \left[\sup_{0\le s\le t} |\log A^{(1)}(s) - \log A^{(2)}(s)| \right] \right)^2. 
\]
Since $c_5 \doteq \sup_{x\in \R_+} |\theta'(x) x| < \infty$, we have
\[
	|\theta(e^x) - \theta(e^y)| \le \sup_{z\in \R_+} |\theta'(z) z| \cdot |x-y| = c_5 |x-y|\; \mbox{ for all } x,y \in \R.
\] 
Thus, 
\begin{align*}
	d_{BL}\left( \nu^{(1)}(s), \nu^{(2)}(s)\right) 
							&\le  \sup_{f\in BL(\R^d)}E_{\hat \Theta} \left| \theta(A^{(1)}(s)) f(X^{(1)}(s)) - \theta(A^{(2)}(s))f(X^{(2)}(s)) \right| \\
							&\le E_{\hat \Theta}\left[ \theta(A^{(1)}(s)) \|X^{(1)}(s) - X^{(2)}(s)\| \right] + c_5 E_{\hat \Theta} \left|\log A^{(1)}(s) - \log A^{(2)}(s) \right|. 
\end{align*}
Hence, using \eqref{eq:eq340}, for some $c_6 \in (0, \infty)$, 
\[
	\sup_{0\le s\le t} d_{BL}\left(\nu^{(1)}(s), \nu^{(2)}(s)\right)^2 \le c_6(g(t) + \tilde h(t) ) \; \mbox{ for all } t \in [0,T]. 
\]
Now exactly as in \eqref{eq:eq504}, we have that for some $c_7 \in (0,\infty)$, 
\[
	g(t) \le c_7 \int_0^t (g(s) + \tilde h(s))\,ds \; \mbox{ for every } t \in [0,T].
\]
Note that, for some $c_8 \in (0,\infty)$, 
$$\left(E_{\hat \Theta}\left[ \int_0^t  \left(\gamma(\nu^{(1)}(s)) - \gamma(\nu^{(2)}(s))\right)^T  u(s) \,ds\right]\right)^2
\le c_8 \int_0^t d_{BL}\left(\nu^{(1)}(s), \nu^{(2)}(s)\right)^2 \,ds.$$
Using this estimate and Lipschitz properties of $c, \gamma$, and $\beta$, we now have that, for some $c_9 \in (0,\infty)$
\[
	\tilde h(t) \le c_9 \int_0^t (g(s) + \tilde h(s))\,ds \mbox{ for every } t \in [0,T].
\]
Thus 
\[
	g(t) + \tilde h(t) \le (c_7+c_9) \int_0^t (g(s) + \tilde h(s))\,ds \mbox{ for every } t \in [0,T], 
\]
which shows the indistinguishability of $(X^{(1)}, \log A^{(1)})$ and $(X^{(2)}, \log A^{(2)})$ and hence the indistinguishability of $(X^{(1)},  A^{(1)})$ and $(X^{(1)},  A^{(1)})$ on $[0,T]$. 
\end{proof}

We now complete the proof of the lower bound by constructing $(u^n, v^n)$ that satisfy \eqref{eq:eq1242b}.
Let $(\bar X, \bar A, \rho, W)$ be the coordinate maps on the space $(\mathcal{Z}_2, \sB(\mathcal{Z}_2), \Theta)$ equipped with the canonical filtration 
$\tilde \clh_t$ defined  as in the proof of Lemma \ref{limit}.
Since $\Theta \in \sE_2[\varphi]$, equation \eqref{controlledSDEtype2} is satisfied with $\nu(t) = \nu_{\Theta}(t)$.
Disintegrate $\Theta_{\vartheta}$ as 
\[
	\Theta_{\vartheta}(dx, da, dr, dw) = \eta_0(dx, da)\,[\Theta]_4(dw)\,\hat{\Lambda}_0(x,a, w, dr).
\]
Let $\mathcal{V}, \Omega', \sF'$  and coordinate processses $W_i, B, \rho_i$ be as introduced in Section \ref{sec:laplow2.1}. 
As before, let $\Gamma$ be the standard Wiener measure on $\mathcal{V}$.
Next, for each $n\in \N$, define the probability measure $P^n$ on $(\Omega', \sF')$ by 
\[
	dP^n(r, w, \beta) = \left[\bigotimes_{i=1}^n \,[\Theta]_4(dw_i)\,\hat{\Lambda}_0(x_i^n, a^n_i, w_i, dr_i)\, \bigotimes_{i=n+1}^\infty \,[\Theta]_{(3,4)}(dr_i, dw_i)\right]\otimes\,\Gamma(d\beta). 
\]
Under $P^n$,  $\{W_i\}_{1\le i\le n}$ and $B$ are mutually independent Brownian motions. Define the sequence $\{\Lambda^n\}_{n\in \N}$ of $\mathcal{P}(\R^d\times \R_+\times \mathcal{R}_1 \times \mathcal{W})$-valued random variables by
\[
	\Lambda^n(A\times B \times R\times C) = \frac{1}{n} \sum_{i=1}^n \delta_{x_i^n}(A)\delta_{a_i^n}(B)\delta_{\rho_i}(R)\delta_{W_i}(C), \; A\times B\times R\times C \in \sB(\R^d\times \R_+\times \sR_1 \times \mathcal{W}).
\]
Then by Condition \ref{wtdmzrinit}, 
\begin{equation}\label{weakPns2}
	P^n \circ (\Lambda^n)^{-1} \to \delta_{\Theta_{\vartheta}}. 
\end{equation}
Let, for $n \in \NN$, $v^n \doteq \varphi$. Then $v^n \in S_M$ for every $n$, where  $M \doteq \int_0^T \|\varphi(s)\|^2 ds$.
Next, define $u_i$ by \eqref{eq:equtorho}
and for each $n\in \N$ let $(\bar{X}^n_1, \bar{A}^n_1 \ldots, \bar{X}_n^n, \bar{A}^n_n)$ be the solution on $(\Omega', \sF', P^n)$ of the system in \eqref{controlledn2}
(with $u^n_i$ replaced with $u_i$).

Define the occupation measure $Q^n$ by the right side of \eqref{eq:eq906}, replacing $\rho^n_i$ with $\rho_i$.
 Let $E^n$ denote expectation over the probability measure $P^n$. Then, as in \eqref{eq:eq645} (using Condition \ref{wtdmzrinit} instead of \ref{empmzrinit}), we see that
\begin{align}
	\limsup_{n\to\infty} E^n \left[ \frac{1}{n} \sum_{i=1}^n \int_0^T \|u_i(t)\|^2\,dt \right] 
				&= E_\Theta\left[ \int_0^T  \left\| \int_{\R^m} y\,\rho_t(dy) \right\|^2 \,dt \right]
				< \infty. \label{eq:costcgcea}
\end{align}
It now follows from Lemma \ref{tightness2} that $\{({Q}^n, v^n)\}_{n\in \N}$ is tight.
 If $({Q}, {v})$ is a limit point of this sequence defined on some probability space $(\tilde{\Omega}, \tilde{\sF}, \tilde{P})$, then ${v} = \varphi$ $\tilde{P}$-a.s., and, by Lemma \ref{limit}, 
 ${Q} \in \sE_2[{v}] = \sE_2[\varphi]$ $\tilde{P}$-a.s.
 Also, $\Theta \in  \sE_2[\varphi]$.
  By \eqref{weakPns2}, for $\tilde{P}$-a.e. $\omega \in \tilde{\Omega}$, 
${Q}_{\vartheta}(\omega) = \Theta_{\vartheta}$.
Thus
by the  weak uniqueness established in Lemma \ref{pathunique2}, ${Q} = \Theta$ $\tilde{P}$-a.s. 
Thus we have $Q^n \to \Theta$ in probability. A similar argument as in Lemma \ref{limit} now shows that 
$(Q^n, \nu_{Q^n}) \to (\Theta, \nu_{\Theta})$ in probability.
Finally,
\begin{align*}
	&\limsup_{n\to\infty} E^n\left[ \frac{1}{2n} \sum_{i=1}^n \int_0^T \|u_i(t)\|^2\,dt + \frac{1}{2n\kappa(n)^2}\int_0^T\|v^n(t)\|^2\,dt + F(\bar{\mu}^n) \right] \\
				& = \limsup_{n\to\infty} E^n\left[ \frac{1}{2n} \sum_{i=1}^n \int_0^T \|u_i(t)\|^2\,dt + \frac{1}{2n\kappa(n)^2}\int_0^T\|v^n(t)\|^2\,dt + F(\nu_{{Q}^n}) \right] \\
				& \le \frac{1}{2}E_\Theta\left[ \int_{\R^m\times [0,T]}\|y\|^2\rho(dy\,dt) \right]
				 + \frac{1}{2\la^2} \int_0^T \|\varphi(t)\|^2\,dt + F(\nu_{\Theta}),
\end{align*}
which follows from \eqref{eq:costcgcea}, the equality $v^n = \varphi$, the convergence $(Q^n, \nu_{Q^n}) \to (\Theta, \nu_{\Theta})$, and the assumption that $\sqrt{n} \kappa(n) \to \lambda$.
This proves \eqref{eq:eq1242b} and completes the proof of the lower bound. \hfill \qed

\subsection{Rate Function Property of $I_2$}
\label{sec:ratefunctype2}
The proof is very similar to the argument in Section \ref{sec:ratefun2.1} and so we omit the details and note only that we use the argument in Lemma \ref{limit} to show that if for  $\Theta^n, \Theta \in \clp_2(\clz_2)$, $\Theta^n \to \Theta$, and a bound as in \eqref{eq:unifbdlevset} is satisfied for every $n$,
then under the conditions of Theorem \ref{resulttype2}, $\nu_{\Theta^n}\to \nu_{\Theta}$ in $\mathcal{K}$. \hfill \qed

\section{Proof Sketch of Theorem \ref{thm:diffspeed}. }
\label{sec:proofsketchthm3}
In Section \ref{sec:diffspeed1} we sketch the proof of part (i) of the theorem while part 
(ii) is sketched in Section \ref{sec:diffspeed2}.

\subsection{Case I:  $\sqrt{n}\kappa(n) \to 0$}
\label{sec:diffspeed1}
Recall that we assume Conditions   \ref{empmzrinit}, \ref{Lip}, and \ref{sigmagamma} hold.
For the Laplace upper bound we start with the inequality in \eqref{eq:seqc}
for some $(u^n, v^n) \in \mathcal{A}^{1,n} \times \mathcal{A}^2_M$.
This inequality gives the uniform bound in \eqref{eq:eq128}. With this uniform bound, the tightness of the sequence of
$\clp(\clz_1)$-valued random variables $Q^n$ defined in \eqref{eq:bigempmzr} is shown  as in Lemma \ref{tightness}.

Furthermore, the inequality in \eqref{eq:eq128} also shows that
$$E\left[\int_0^T \|v^n(t)\|^2 dt \right] \le 2n \kappa(n)^2 (2\|F\|_{\infty}+1) \to 0\; \mbox{ as } n\to \infty,$$
since $n\kappa(n)^2 \to 0$. Thus $v^n\to 0$ in $L^2([0,T]:\R^k)$, in probability.   

Now a similar argument as in Lemma \ref{limit1} shows that if $Q$ is a weak limit point of $Q^n$, then $Q \in \cle_1[0]$ a.s.
Finally, with $(u^n, v^n)$ as above and $\bar \mu^n$ defined as below \eqref{controlledn}, taking the limit as $n \to \infty$ along any  convergent subsequence of $\{(Q^n, v^n)\}$,
\begin{align*}
	&\liminf_{n\to\infty} E\left[ \frac{1}{2n} \sum_{i=1}^n \int_0^T \|u_i^n(t)\|^2\,dt + \frac{1}{2n\kappa(n)^2}\int_0^T \|v^n(t)\|^2\,dt + F(\bar{\mu}^n) \right] \\ 
	&\ge \liminf_{n\to\infty} E\left[ \frac{1}{2n} \sum_{i=1}^n \int_0^T \|u_i^n(t)\|^2\,dt  + F(\bar{\mu}^n) \right]\\
		&\ge E\left[ \int_{\sR_1}\left(\frac{1}{2}\int_{\R^m\times [0,T]}\|y\|^2\,r(dy\,dt) \right)\, [Q]_2(dr) + F([Q]_1) \right] \\
		&\ge  \inf_{\Theta\in \mathcal{E}_1[0]} \left( E_{\Theta}\left[ \frac{1}{2} \int_{\R^m\times[0,T]} \|y\|^2\,\rho(dy\,dt) \right] + F([\Theta]_1) \right), 
\end{align*}
where the last inequality uses the fact that $Q \in \cle_1[0]$ a.s.
Since $\delta\in(0,1)$ in \eqref{eq:seqc} is arbitrary, recalling the definition of $\tilde I_{1,0}$ in \eqref{eq:itillar}, the above inequality completes the proof of the Laplace upper bound.

For the proof of the lower bound we proceed as follows. Fix $\veps>0$ and $F \in \clc_b(\clp(\clx))$. Choose $\Theta \in \mathcal{E}_1[0]$ such that
\begin{align}\label{eq:chosphithezero}
	\frac{1}{2}E_\Theta\left[ \int_{\R^m\times [0,T]}\|y\|^2\rho(dy\,dt) \right]  + F([\Theta]_1) \le \inf_{\nu \in \mathcal{P}(\mathcal{X})} \left[ F(\nu) + \tilde{I}_{1,0}(\nu) \right] + \veps. 
\end{align}
Using this $\Theta$, define $(\Om', \clf', P^n)$,  as in Section \ref{sec:laplow2.1}. Also, take $v^n=0$ for every $n$. Then with $u_i$
defined as in \eqref{eq:equtorho} and $\bar \mu^n$ and $Q^n$ constructed as below \eqref{eq:equtorho}, we have exactly as in \eqref{eq:eq1078} that
\begin{align*}
	&\limsup_{n\to\infty} -\frac{1}{n} \log E\left[e^{-n F(\mu^n)}\right]\\
	&\le \limsup_{n\to\infty} E^n\left[ \frac{1}{2n} \sum_{i=1}^n \int_0^T \|u_i(t)\|^2\,dt + \frac{1}{2n\kappa(n)^2}\int_0^T\|v^n(t)\|^2\,dt + F(\bar{\mu}^n) \right] \\
	&= \limsup_{n\to\infty} E^n\left[ \frac{1}{2n} \sum_{i=1}^n \int_0^T \|u_i(t)\|^2\,dt  + F([{Q}^n]_1) \right] \\
	&\le \frac{1}{2}E_\Theta\left[ \int_{\R^m\times [0,T]}\|y\|^2\rho(dy\,dt) \right]
				  + F([\Theta]_1).
\end{align*}
In particular, in obtaining the last equality we have used the uniqueness result in Lemma \ref{pathunique} (applied to the case where $\varphi=0$).
Combining the above inequality with \eqref{eq:chosphithezero} and since $\veps>0$ is arbitrary, we have the desired lower bound.

Finally, the proof that $\tilde I_{1,0}$ is a rate function can be carried out  as in Section \ref{sec:ratefun2.1}. We omit the details.
\hfill \qed
\subsection{Case II: $\sqrt{n}\kappa(n) \to \infty$}
\label{sec:diffspeed2}
For this case we assume Conditions \ref{empmzrinit} and \ref{Lip}. Condition  \ref{sigmagamma} is not needed.
In a similar manner to Theorem \ref{reptype1} it can be shown that for any $F \in \sC_b(\mathcal{P}(\mathcal{X}))$ and for each $n\in \N$, 
\begin{equation}\label{eq:seqrsm}
	-\kappa(n)^2\log E\left[e^{-\frac{1}{\kappa(n)^2} F(\mu^n)}\right] =\inf_{(u, v) \in \mathcal{A}^{1,n}\times \mathcal{A}^2} E\left[ \frac{\kappa(n)^2}{2} \sum_{i=1}^n \int_0^T \|u_i(t)\|^2\,dt + \frac{1}{2}\int_0^T \|v(t)\|^2\,dt + F(\bar{\mu}^n) \right],
\end{equation}
where $\bar \mu^n$ is as introduced below \eqref{controlledn}.
Furthermore, for every $\delta > 0$, there is an $M < \infty$ such that for each $n\in \N$,
\begin{equation}\label{eq:seqbrsm}
\begin{aligned}
	&-\kappa(n)^2\log E\left[e^{-\frac{1}{\kappa(n)^2}  F(\mu^n)}\right] \\
	&\qquad\ge
	\inf_{(u, v) \in \mathcal{A}^{1,n}\times \mathcal{A}_M^2} 
	E\left[\frac{\kappa(n)^2}{2} \sum_{i=1}^n \int_0^T \|u_i(t)\|^2\,dt + \frac{1}{2}\int_0^T \|v(t)\|^2\,dt + F(\bar{\mu}^n) \right] - \delta.
\end{aligned}
\end{equation}
Fix $F \in \sC_b(\mathcal{P}(\mathcal{X}))$ and  $\delta \in (0,1)$. Select, for each $n\in \N$, $(u^n, v^n) \in \mathcal{A}^{1,n} \times \mathcal{A}^2_M$ such that 
\begin{equation}\label{eq:seqcn}
	-\kappa(n)^2\log E\left[e^{-\frac{1}{\kappa(n)^2} F(\mu^n)}\right]  \ge E\left[ \frac{\kappa(n)^2}{2} \sum_{i=1}^n \int_0^T \|u_i^n(t)\|^2\,dt + \frac{1}{2}\int_0^T \|v^n(t)\|^2\,dt + F(\bar{\mu}^n) \right] - 2\delta,
\end{equation}
where $\bar{\mu}^n = \frac{1}{n}\sum_{i=1}^n \delta_{\bar{X}_i^n}$ and $\bar{X}_i^n$ is given by \eqref{controlledn} (repalcing 
$(u,v)$ with $(u^n, v^n)$).
The uniform bound in \eqref{eq:eq128} now gets replaced by
\begin{equation}
	\label{eq:eq128new}
	\sup_{n \in \NN} E\left[ \frac{\kappa(n)^2}{2}\sum_{i=1}^n \int_0^T \|u_i^n(t)\|^2\,dt + \frac{1}{2}\int_0^T \|v^n(t)\|^2\,dt\right] \le 2(\|F\|_{\infty} +1).
\end{equation}
This in particular says that
\begin{equation}
	\label{eq:costtozero}
	E\left[ \frac{1}{n}\sum_{i=1}^n \int_0^T \|u_i^n(t)\|^2\,dt\right] \le \frac{4(\|F\|_{\infty} +1)}{n\kappa(n)^2} \to 0 \mbox{ as } n \to \infty,
\end{equation}
since $n\kappa(n)^2\to \infty$.
Define $Q^n$ by \eqref{eq:bigempmzr}, where $\rho^n_i$ are as in \eqref{eq:eq1027}. The tightness of $(Q^n, v^n)$ is shown  as in Lemma
\ref{tightness}. Let $(Q,v)$ be a weak limit point (along some subsequence) given on some probability space $(\Om^*, \clf^*, P^*)$.
Then using \eqref{eq:costtozero} we see that
\begin{align*}
	E^*\left[ \int_{\sR_1}\int_{\R^m\times [0,T]} \|y\|^2\,r(dy\,dt)\,[Q]_2(dr) \right] \le \liminf_{n\to\infty} E\left[ \frac{1}{n} \sum_{i=1}^n \int_0^T \|u_i^n(t)\|^2\,dt \right] 
			=0.
\end{align*}
Thus we have that
$[Q]_2 = \delta_{r^o}$, where we recall that $r^o(dy\, dt) = \delta_{\{0\}}(dy) \,dt$. Also, as in Lemma \ref{limit1}, it can be seen that
$Q\in \cle_1[v]$ a.s. Combining this fact with $[Q]_2 = \delta_{r^o}$ and recalling the definition of $\tilde \cle_1$ given in Section \ref{sec:sizeofnoise}, we now see that $[Q]_{(1,3)}\in \tilde\cle_1[v]$ $P^*$-a.s. Taking the limit as $n \to \infty$ along a convergent subsequence
\begin{align*}
	&\liminf_{n\to\infty} E\left[ \frac{\kappa(n)^2}{2} \sum_{i=1}^n \int_0^T \|u_i^n(t)\|^2\,dt + \frac{1}{2}\int_0^T \|v^n(t)\|^2\,dt + F(\bar{\mu}^n) \right] \\ 
		 &\ge E^*\left[  \frac{1}{2}\int_0^T \|v(t)\|^2\,dt + F([Q]_1) \right] \\
		&\ge \inf_{\varphi \in L^2([0,T]:\R^k)} \inf_{\Theta\in \tilde \cle_1[\varphi]} \left(  \frac{1}{2} \int_0^T \|\varphi(t)\|^2\,dt + F([\Theta]_1) \right), 
\end{align*}
where the last inequality uses the fact that $[Q]_{(1,3)} \in \tilde\cle_1[v]$ $P^*$-a.s.
Combining this with \eqref{eq:seqcn} and recalling that $\delta\in(0,1)$ is arbitrary completes the proof of the Laplace upper bound.

Now we consider the lower bound.
Fix $\veps>0$ and $F \in \clc_b(\clp(\clx))$. Choose a $\varphi \in L^2([0,T]:\R^k)$ and a $\Theta^o \in \tilde\cle_1[\varphi]$ such that
\begin{align}\label{eq:chosphitheb}
	 \frac{1}{2}\int_0^T \|\varphi(t)\|^2\,dt + F([\Theta^o]_1) \le \inf_{\nu \in \mathcal{P}(\mathcal{X})} \left[ F(\nu) + \tilde{I}_{1,\infty}(\nu) \right] + \veps, 
\end{align}
where $\tilde I_{1,\infty}$ is as in \eqref{eq:itilsma}.
Define $\Theta$ on $(\clz_1, \clb(\clz_1))$ as $\Theta(dz,\, dr,\, dw) = \Theta^o(dz,\,  dw)\, \delta_{r^o}(dr)$.
Using this $\Theta$, 
define $(\Om', \clf', P^n)$  as in Section \ref{sec:laplow2.1}. Also, let $v^n=\varphi$ for every $n$. 
Note that  $u_i$
defined through \eqref{eq:equtorho} satisfies $u_i=0$ $P^n$-a.s. 
Now with $\bar \mu^n$ and $Q^n$ constructed as below \eqref{eq:equtorho}, we have as in \eqref{eq:eq1078} that
\begin{align*}
	&\limsup_{n\to\infty} -\kappa(n)^2\log E\left[e^{-\frac{1}{\kappa(n)^2} F(\mu^n)}\right]\\
	&\le \limsup_{n\to\infty} E^n\left[ \frac{\kappa(n)^2}{2} \sum_{i=1}^n \int_0^T \|u_i(t)\|^2\,dt + \frac{1}{2}\int_0^T\|v^n(t)\|^2\,dt + F(\bar{\mu}^n) \right] \\
				&= \limsup_{n\to\infty}\left(  \frac{1}{2}\int_0^T\|\varphi(t)\|^2\,dt  + E^n\left[F([{Q}^n]_1) \right]  \right)\\
				&= \frac{1}{2}\int_0^T\|\varphi(t)\|^2\,dt
				  + F([\Theta]_1).
\end{align*}
The last equality uses a uniqueness result of the type in Lemma \ref{pathunique} which is shown in the same manner. In particular, since $[\Theta]_2= \delta_{r^o}$,
the proof does not require Condition \ref{sigmagamma} since the analogue of the last term on the right side of \eqref{eq:simplethird}, namely
$$E_{\hat \Theta}\left[\left(\int_0^{T} \left( \|X^{(1)}(s) - X^{(2)}(s)\| + d_{BL}(\nu^{(1)}(s), \nu^{(2)}(s))\right)  \|u(s)\|\,ds \right)^2\right],$$
is simply zero.
Combining the above inequality with \eqref{eq:chosphitheb}
and since $\veps>0$ is arbitrary, we have the desired lower bound.

Finally, the proof that $\tilde I_{1,\infty}$ in \eqref{eq:itilsma} is a rate function is carried out  as before and is omitted.
\hfill \qed

\section{Proof Sketch of  Theorem \ref{resulttype1}(1)}
Let $\rho^n_i = r^o$ for all $i= 1, \ldots n$ and $n \in \NN$. With this choice of $\rho^n_i$, define $Q^n$ by
\eqref{eq:bigempmzr} by replacing $\bar X^n_i$ with $X^n_i$. By Lemmas \ref{tightness} and \ref{limit1}, $\{Q^n\}$ is tight 
and any weak limit point $Q$ satisfies $Q\in \cle_1[0]$. This in particular shows that the 
 nonlinear SDE
\begin{equation}\label{tildeXsde}
\begin{aligned}
	d\tilde{X}(t) &=  b(\tilde{X}(t), \tilde{\mu}(t))\,dt +  \sigma(\tilde{X}(t), \tilde{\mu}(t))\,dW(t), \\ 
	\tilde{X}(t) &\sim \tilde \mu(t),\quad t \ge 0, \quad  \tilde \mu(0)= \xi_0,
\end{aligned}
\end{equation}
has a weak solution, namely on some filtered probability space $(\bar\Om, \bar\clf, \bar P, \{\bar\clf_t\})$ equipped with an $m$-dimensional $\bar\clf_t$-Brownian motion $W$, there is an $\bar\clf_t$-adapted process $\tilde X$ with sample paths in $\clc([0,T]:\RR^d)$ which satisfies
the above equation. Furthermore, using standard Lipschitz estimates, martingale inequalities, and Gronwall's lemma, we see that
pathwise uniqueness holds for \eqref{tildeXsde}. Thus, by the Yamada-Watanabe results (cf. \cite[Chapter IV]{ikewat}) there is a unique weak solution to \eqref{tildeXsde}. 
Denote this weak solution (namely the probability law of $(\tilde X, W)$)) as $\Theta^*_{(1,3)}$.
Let
$\Theta^* \in \clp(\clz_1)$ be defined as $\Theta^*(dx, dr, dw ) = \Theta^*_{(1,3)}(dx, dw)\, \delta_{r^o}(dr)$.  Then any weak limit point
$Q$ of $Q^n$ must equal $\Theta^*$ a.s. As argued above, $\Theta^*$ is the unique element in $\clp(\clz_1)$ that is a weak solution of $\cls_1[0, \nu_{\Theta}]$ and satisfies $[\Theta^*]_2 = \delta_{r^o}$. The result follows. \hfill \qed

\section{Proof Sketch of Theorem \ref{resulttype2}(1)} 
\label{sec:llnwted}
It was noted in Section \ref{sec:wtedempmzr}, that the system of equations in \eqref{eq:interempwtd} has a unique strong solution under Conditions \ref{empmzrinit}, \ref{Lip}, \ref{LipA},  \ref{theta}, and
\ref{wtdmzrinit}.
This can be seen as follows.  
Note that, with $\zeta = b,\sigma,\alpha$, the maps
$$(x,a) = (x_1, \ldots , x_n, a_1, \ldots, a_n) \mapsto (\zeta(x_1, \mu(x,a)), \ldots,  \zeta(x_n, \mu(x,a))),$$
and with $\vs = c, \gamma^T, \beta^T$, the maps
$$(x,a) \mapsto (a_1\vs(x_1, \mu(x,a)), \ldots,  a_n\vs(x_n, \mu(x,a))),$$
where $\mu(x,a) = \frac{1}{n}\sum_{i=1}^n \theta(a_i) \delta_{x_i}$, are locally Lipschitz functions with (at most) linear growth from $\RR^{nd}\times \RR_+^n$ to appropriate Euclidean spaces.
For example, for $(x,a), (\tilde x, \tilde a) \in \RR^{nd}\times \RR_+^n$,
\begin{align*}
	|b(x_i,\mu(x,a)) - b(\tilde x_i,\mu(\tilde x,\tilde a))|
&\le K (\|x_i- \tilde x_i\| + d_{BL}(\mu(x,a), \mu(\tilde x,\tilde a)))\\
&\le K \left(\|x_i- \tilde x_i\| + \frac{1}{n} \sum_{i=1}^n\left(|\theta(a_i) - \theta(\tilde a_i)| + \theta(a_i) \|x_i- \tilde x_i\|\right)\right).
\end{align*}
The local Lipschitz property of  $(x,a) \mapsto b(x_1, \mu(x,a))$ is immediate from the above estimate on recalling that under Conditon \ref{theta}, $\theta$ is a Lipschitz function. Properties on other coefficients can be verified in a similar manner. Existence and uniqueness of strong solutions of \eqref{eq:interempwtd} follows from this.

We are interested in the asymptotic behavior of $t\mapsto \mu^n(t)$ regarded as a sequence of $\clc([0,T]:\clm_+(\R^d))$-valued random variables, where $\mu^n(t)$ is defined as in \eqref{eq:interempwtd}. In order to characterize the limit of $\mu^n$, we consider the  nonlinear SDE $\cls_2[0, \tilde \mu]$ in \eqref{controlledSDEtype2}  with $\rho(dy\, dt) = r^o(dy)\, dt$, namely the following equation given
on some filtered probability space $(\bar\Om, \bar\clf, \bar P, \{\bar\clf_t\})$, equipped with an $m$-dimensional $\bar\clf_t$-Brownian motion $W$:
\begin{equation}\label{XAtildeSDE}
\begin{aligned}
d\tilde{X}(t) &=  b(\tilde{X}(t), \tilde{\mu}(t))\,dt +  \sigma(\tilde{X}(t),\tilde{\mu}(t))\,dW(t) \\
d\tilde{A}(t) &=  \tilde{A}(t) c(\tilde{X}(t), \tilde{\mu}(t))\,dt + \tilde{A}(t) \gamma^T(\tilde{X}(t), \tilde{\mu}(t))\,dW(t), \\
\il f, \tilde \mu(t) \ir &= \bar E[\theta(\tilde{A}(t))f(\tilde{X}(t))], \quad f\in \sC_b(\R^d),\quad t \ge 0, \quad (\tilde{X}(0), \tilde A(0)) \sim \eta_0.
\end{aligned}
\end{equation}
Let $\tilde\clz_2 \doteq \clx \times \cly \times \clw$, and denote the canonical coordinate maps on this space as $(\tilde X, \tilde A, W)$.
Let $\tilde \clh_t \doteq \sigma\{\tilde X(s), \tilde A(s), W(s), s \le t\}$ be the canonical filtration on this space.
By a {\it weak solution} of \eqref{XAtildeSDE} we mean a probability measure $\Theta$ on $\tilde\clz_2$ such that, under $\Theta$, $W$ is a standard $\tilde \clh_t$-Brownian motion and
the system of equations \eqref{XAtildeSDE} are satisfied a.s. 

As before, let $\rho^n_i = r^o$ for all $i= 1, \ldots, n$ and $n \in \NN$. Define $Q^n$ by
\eqref{eq:eq906} by replacing $(\bar X^n_i, \bar A^n_i)$ with $(X^n_i, A^n_i)$. By Lemmas \ref{tightness2} and \ref{limit}, $\{Q^n\}$ is tight 
and any weak limit point $Q$ satisfies $Q\in \cle_2[0]$ (we use \eqref{eq:1202}(ii) here). In particular, this shows that
 $[Q]_{1,2,4}$ is a weak solution of \eqref{XAtildeSDE}.
  The following result shows the equation in fact has a unique weak solution.
\begin{lemma}
	\label{lem:weakuniqnlwted}
	Under Conditions  \ref{empmzrinit}, \ref{Lip}, \ref{LipA}, \ref{theta}, and \ref{wtdmzrinit}, equation \eqref{XAtildeSDE} has a unique weak solution.
\end{lemma}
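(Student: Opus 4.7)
The plan is to reduce weak uniqueness for \eqref{XAtildeSDE} to Lemma \ref{pathunique2}. The key observation is that weak solutions $\Theta$ of \eqref{XAtildeSDE} on $\tilde\clz_2 = \clx\times\cly\times\clw$ are in bijection with measures $\hat\Theta \in \cle_2[0]$ satisfying $[\hat\Theta]_3 = \delta_{r^o}$, via the embedding $\hat\Theta(dx, da, dr, dw) \doteq \Theta(dx, da, dw)\,\delta_{r^o}(dr)$. Since $r^o(dy\,dt) = \delta_{\{0\}}(dy)\,dt$, the $\rho$-dependent control terms appearing in $\cls_2[0, \nu_{\hat\Theta}]$ vanish, and the system \eqref{controlledSDEtype2} with $\varphi = 0$ and $\rho = r^o$ reduces exactly to \eqref{XAtildeSDE}.

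Before applying Lemma \ref{pathunique2} I would verify that $\hat\Theta$ belongs to $\mathcal{P}_2(\mathcal{Z}_2)$. The condition $E_{\hat\Theta}\big[\int \|y\|^2\rho(dy\,dt)\big] = 0$ is automatic, while for $E_{\hat\Theta}\big[\sup_{0\le t\le T} \theta(\bar A(t))\big] < \infty$ I would use the Dol\'eans-Dade representation
\[
	\bar A(t) = \bar A(0)\exp\left(\int_0^t \left[c(\bar X(s), \nu_{\hat\Theta}(s)) - \frac{1}{2}\|\gamma(\bar X(s), \nu_{\hat\Theta}(s))\|^2\right]ds + \int_0^t \gamma^T(\bar X(s), \nu_{\hat\Theta}(s))\,dW(s)\right),
\]
combined with the boundedness of $c, \gamma$ (Condition \ref{LipA}), the initial moment bounds from Condition \ref{wtdmzrinit}, and the growth hypothesis on $\theta$ from Condition \ref{theta} (linear under (b), logarithmic under (a)). These estimates parallel those yielding \eqref{eq:eq1048b} and \eqref{eq:eq217} in Lemma \ref{tightness2}.

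With the embedding validated, uniqueness follows from Lemma \ref{pathunique2}. Given two weak solutions $\Theta^{(1)}, \Theta^{(2)}$ of \eqref{XAtildeSDE} and their lifts $\hat\Theta^{(1)}, \hat\Theta^{(2)} \in \cle_2[0]$, the pushforward under $\vartheta(z, \vs, r, w) = (z(0), \vs(0), r, w)$ is in both cases $\eta_0 \otimes \delta_{r^o} \otimes \gamma_W$, where $\gamma_W$ denotes Wiener measure on $\clw$: the initial pair has law $\eta_0$ by definition, $\rho \equiv r^o$ deterministically, and $W$ is independent of $(z(0), \vs(0))$ because the latter is $\tilde\clh_0$-measurable and $W$ is a $\tilde\clh_t$-Brownian motion. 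Hence $\hat\Theta^{(1)}_\vartheta = \hat\Theta^{(2)}_\vartheta$, and Lemma \ref{pathunique2} yields $\hat\Theta^{(1)} = \hat\Theta^{(2)}$, so $\Theta^{(1)} = \Theta^{(2)}$. Existence is already contained in the paragraph preceding the lemma: the empirical measure $Q^n$ with $\rho_i^n = r^o$ is tight by Lemma \ref{tightness2} under \eqref{eq:1202}(ii), every subsequential weak limit lies in $\cle_2[0]$ by Lemma \ref{limit}, and its $(1,2,4)$-marginal is a weak solution.

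The only substantive step is the $\mathcal{P}_2(\mathcal{Z}_2)$ verification in the second paragraph; without it one cannot legitimately invoke Lemma \ref{pathunique2}, and this is precisely where the two sub-cases of Condition \ref{theta} enter. Everything else is bookkeeping that translates the already-established uniqueness for the controlled equation into the uncontrolled setting.
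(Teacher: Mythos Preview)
Your reduction to Lemma \ref{pathunique2} has a hypothesis mismatch that you do not address. Lemma \ref{pathunique2} is proved under the standing assumptions of Section \ref{sec:laplowbd2}, which include Conditions \ref{sigmagamma} and \ref{gammaconst} (namely $\sigma(x,\mu)=\sigma(\mu)$ and $\gamma(x,\mu)=\gamma(\mu)$) together with the restriction that $\gamma\equiv 0$ whenever Condition \ref{theta}(a) fails. None of these are assumed in Lemma \ref{lem:weakuniqnlwted}, which requires only Conditions \ref{empmzrinit}, \ref{Lip}, \ref{LipA}, \ref{theta}, \ref{wtdmzrinit}. Your claim that ``the only substantive step is the $\mathcal{P}_2(\mathcal{Z}_2)$ verification'' is therefore incorrect: as stated, Lemma \ref{pathunique2} simply does not apply.

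The gap is repairable, but the repair is essentially the paper's own argument. The extra hypotheses in Lemma \ref{pathunique2} are needed precisely to control the term $u(s)=\int y\,\rho_s(dy)$: Condition \ref{sigmagamma} enters in \eqref{eq:eq504} so that the $\sigma u$-difference contributes only a $d_{BL}$-term, Condition \ref{gammaconst} is used analogously in the $\tilde h$-estimate, and the case split (``$\gamma=0$'' versus ``Condition \ref{theta}(a)'') is forced because $E_{\hat\Theta}[\sup_t \bar A(t)^2]$ can fail when both $\gamma\neq 0$ and a nontrivial $u$ are present. When $\rho=r^o$ all $u$-terms vanish and the second-moment bound $\bar E[\sup_t \tilde A(t)^2]<\infty$ holds unconditionally (this is \eqref{eq:eq1247}), so one can run the ``$\gamma=0$'' branch of the Lemma \ref{pathunique2} argument with general Lipschitz $\sigma,\gamma$ and no restriction linking $\theta$ and $\gamma$. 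That is exactly what the paper does: a direct, self-contained pathwise-uniqueness proof using \eqref{eq:eq1247}, the Lipschitz property of $\theta$, and Gronwall, with no case split and no appeal to Lemma \ref{pathunique2}.
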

\begin{proof}
	It sufices to show that the equation has a unique pathwise solution, namely that if $(\tilde X^{(i)}, \tilde A^{(i)}, \tilde \mu^{(i)})$, $i=1,2$ are two solutions of \eqref{XAtildeSDE} given 
	on some filtered probability space $(\bar\Om, \bar\clf, \bar P, \{\bar\clf_t\})$ equipped with an $m$-dimensional $\bar\clf_t$-Brownian motion $W$ (namely, $(\tilde X^{(i)}, \tilde A^{(i)})$ are continuous $\{\bar\clf_t\}$ adapted processes and \eqref{XAtildeSDE} is satisfied with $(\tilde X, \tilde A, \tilde \mu)$
	replaced with $(\tilde X^{(i)}, \tilde A^{(i)}, \tilde \mu^{(i)})$, $i=1,2$),  and such that
	$(\tilde X^{(1)}(0), \tilde A^{(1)}(0)) = (\tilde X^{(2)}(0), \tilde A^{(2)}(0))$ $\bar P$-a.s., then 
	\begin{align}\label{eq:951}
		(\tilde X^{(1)}, \tilde A^{(1)}, \tilde \mu^{(1)}) = (\tilde X^{(2)}, \tilde A^{(2)}, \tilde \mu^{(2)}) \quad \bar P\mbox{-a.s.}\end{align}
	Using Conditions \ref{empmzrinit} and \ref{wtdmzrinit} on the initial random variables and Conditions \ref{Lip} and \ref{LipA} on the coefficients, it is easy to check by Gronwall's inequality that
	\begin{equation}
		\bar E\left[ \sup_{0\le t \le T} \left(\|\tilde X^{(i)}(t)\|^2 + \tilde A^{(i)}(t)^2\right)\right] <\infty \;\mbox{ for } i=1,2. \label{eq:eq1247}
	\end{equation}
Let 
\[
	g(t) = \bar E \left[\sup_{0\le s\le t}\|\tilde{X}^{(1)}(s) - \tilde{X}^{(2)}(s)\|^2\right] \mbox{ and } h(t) = \left(\bar E \left[\sup_{0\le s\le t} |\tilde{A}^{(1)}(s) - \tilde{A}^{(2)}(s)| \right]\right)^2. 
\]
Then, exactly as for \eqref{eq:eqdblnu1nu2},	 there is a $c_1 \in (0,\infty)$ such that for all  $0\le s\le t\le T$, 
\begin{align*}
	d_{BL} \left( \tilde{\mu}^{(1)}(s),  \tilde{\mu}^{(2)}(s) \right)^2 
						&\le c_1 (g(t) + h(t)). 
\end{align*}
By the  Lipschitz property of $b$ and $\sigma$, we then have that   for some  $c_2 \in (0,\infty)$ and all $0\le t \le T$, 
\begin{align*}
	g(t) 
		\le c_2 \int_0^t \left( g(s) + h(s) \right)\, ds. 
\end{align*}
Writing $\tilde{A}^{(i)}(t) = e^{\tilde{Y}^{(i)}(t)}$ for  $i=1,2$ and using the bounded Lipschitz properties of $c$ and $\gamma$, we see as in \eqref{eq:eq434} that for 
some  $c_3 \in (0,\infty)$ and all $0\le t \le T$, 
$$h(t) \le c_3\int_0^t \left( g(s) + h(s) \right)\, ds.$$
Thus, $g(t) + h(t) \le (c_2+c_3) \int_0^t \left( g(s) + h(s) \right)\,ds$ for all $t \in [0,T]$ which, by Gronwall's inequality, then shows that $g(T) + h(T) = 0$ . Thus $(\tilde{X}^{(1)}, \tilde{A}^{(1)})$ and $(\tilde{X}^{(2)}, \tilde{A}^{(2)})$ are indistinguishable on $[0,T]$ which proves \eqref{eq:951}.
\end{proof}

We now complete the proof of Theorem \ref{resulttype2}(1). 
Denoting the unique weak solution of \eqref{XAtildeSDE} as $Q^*_{(1,2,4)}$ we now have that $[Q^n]_{(1,2,4)} \to Q^*_{(1,2,4)}$ in probability as $n \to \infty$. Let $Q^*(dx,\, da,\, dr, dw) \doteq Q^*_{(1,2,4)}(dx,\, da,\,  dw) \delta_{r^o}(dr)$.
Then $Q^n \to Q^*$ in probability. Note that $Q^*$ is the unique element $\Theta$ in $\clp(\clz_2)$ that is a weak solution of $\cls_2[0, \nu_{\Theta}]$ and satisfies $[\Theta]_3 = \delta_{r^o}$.
Using the estimate 
$$\sup_{n\in \NN}E \left[\frac{1}{n}\sum_{i=1}^n \sup_{0\le t \le T} A^n_i(t)^2\right] < \infty,$$ 
which follows by the argument in
\eqref{eq:eq1048b}, it now follows exactly as in the proof of the Laplace upper bound (see arguments below the proof of Lemma \ref{limit}) that
 $\nu_{Q^n} \to \nu_{Q^*}$ in $\clk$, in probability, where for $\Theta \in \clp_2(\clz_2)$, $\nu_{\Theta}$ is defined as in \eqref{eq:eq503}. \hfill \qed

\vspace{\baselineskip}\noindent \textbf{Acknowledgements:} 
This research was  supported in part by the NSF (DMS-1814894, DMS-1853968).
We thank an anonymous reviewer for directing us to the related works \cite{husalspi,sunwanxuyan}.

\scriptsize{\noindent \textsc{A. Budhiraja and M. Conroy\newline
Department of Statistics and Operations Research\newline
University of North Carolina\newline
Chapel Hill, NC 27599, USA\newline
email:  budhiraj@email.unc.edu , mconroy@live.unc.edu \vspace{\baselineskip} }

}

\end{document}